\let\eps\varepsilon
\newcommand{\CC}{\mathbb{C}}
\newcommand{\RR}{\mathbb{R}}
\newcommand{\pts}{\mathcal{P}}
\newcommand{\circles}{\mathcal{C}}
\newcommand{\spheres}{\mathcal{S}}
\newtheorem{thm}{Theorem}[section]
\newtheorem{lem}{Lemma}[section]
\newtheorem{cor}{Corollary}[section]
\newtheorem{example}{Example}[section]
\newtheorem*{circlePartitioningLemLem}{Lemma \ref{circlePartitioningLem}}
\theoremstyle{remark}
\newtheorem{rem}{Remark}[section]
\newtheorem{defn}{Definition}[section]
\begin{document}

\title{Breaking the 3/2 barrier for unit distances in three dimensions}
\author{Joshua Zahl\thanks{University of British Columbia, Vancouver BC}}
\maketitle

\begin{abstract}
We prove that every set of $n$ points in $\RR^3$ spans $O(n^{295/197+\eps})$ unit distances. This is an improvement over the previous bound of $O(n^{3/2})$, which was a natural barrier for this problem. A key ingredient in the proof is a new result for cutting circles in $\RR^3$ into pseudo-segments.
\end{abstract}

\maketitle

\section{Introduction}
This paper proves the following theorem.
\begin{thm}\label{mainThm}
Every set of $n$ points in $\RR^3$ spans  $O(n^{\frac{295}{197}+\eps})=O(n^{\frac{3}{2}-\frac{1}{394} +\eps})$ unit distances.
\end{thm}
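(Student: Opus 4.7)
The plan is to refine the classical reduction from unit distances in $\RR^3$ to point-circle incidences, and then apply the new circle-cutting lemma announced in the abstract to beat the $O(n^{3/2})$ barrier.

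First I would carry out the standard reduction. Write $u(n)$ for the number of unit distances spanned by the $n$-point set and let $d(p)$ denote the number of points at unit distance from $p$. By Cauchy--Schwarz, $u(n)^2 \leq n\sum_p d(p)^2$, and $\sum_p d(p)^2$ is, up to constants, the number of triples $(p,q_1,q_2)$ with $|pq_1|=|pq_2|=1$. For any pair $q_1,q_2$ with $|q_1q_2|<2$, the set of valid $p$ is the circle $C_{q_1q_2} = S(q_1,1)\cap S(q_2,1)$. Hence $\sum_p d(p)^2$ is controlled by the number of incidences between the $n$ original points and the family of up to $\binom{n}{2}$ circles $\{C_{q_1q_2}\}$ in $\RR^3$. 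The classical point-circle incidence bound (obtained because any three distinct circles in $\RR^3$ share at most two points, and circles are pseudo-segments) produces $\sum_p d(p)^2 = O(n^2)$, from which one recovers $u(n)=O(n^{3/2})$.

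To break past $3/2$, I would invoke the new cutting lemma and decompose the $m$ circles into $N$ pseudo-segments, where any two arcs cross at most a bounded number of times. A Sz\'ekely-type crossing-number argument then yields a point-arc incidence bound that decreases with $N$, provided the cutting achieves $N=o(m^{3/2})$ at an appropriate scale. Feeding this back into the estimate for $\sum_p d(p)^2$, and balancing against a rich/poor point threshold used to control the high-multiplicity regime (possibly with an intermediate polynomial-partitioning step on the point set), produces the final exponent $295/197 = 3/2 - 1/394$.

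The technical heart, and the main obstacle, is the circle-cutting lemma itself. A naive cutting by pairwise intersections produces $O(m^2)$ arcs and is useless; one must exploit algebraic structure. I would expect the proof to use polynomial partitioning at constant degree, combined with a careful case analysis for circles that lie on common low-degree surfaces (planes, spheres, quadrics), where the combinatorics of pairwise intersection degenerates. Bounding the number of circles that can be trapped on such a surface, and handling their incidences separately, is where the genuinely three-dimensional geometry enters and what ultimately makes the $1/394$ improvement possible.
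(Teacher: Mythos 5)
Your overall roadmap---reduce unit distances to point--circle incidences in $\RR^3$, cut the circles into pseudo-segments, and handle the degenerate ``many circles on a low-degree surface'' case separately---matches the paper's in broad outline, but two specific steps in your plan would fail as written.

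First, the reduction. You propose the global Cauchy--Schwarz inequality $u(n)^2 \le n\sum_p d(p)^2$ and then bound $\sum_p d(p)^2$ by incidences between the $n$ points and all $\binom{n}{2}$ circles $C_{q_1,q_2}$. With $\Theta(n^2)$ circles, the new cutting lemma produces roughly $(n^2)^{4/3}=n^{8/3}$ pseudo-segments, and no incidence bound for $n$ points against $n^{8/3}$ pseudo-segments can drop below $n^{8/3}$, which after undoing Cauchy--Schwarz gives something worse than the trivial $O(n^{5/3})$. The essential step you treat as optional (``possibly with an intermediate polynomial-partitioning step'') is in fact the first and indispensable move: one partitions $\RR^3$ by a degree-$D$ polynomial, forms circles only from pairs of points in the \emph{same} cell, and thereby replaces the $\Theta(n^2)$ circles by $O(n^2D^{-3})$ circles, at the cost of a $O(D^2n+D^4)$ error term coming from spheres crossing the partition. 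This is what makes the later savings nonvacuous, and it also lets you trade off the circle count against the boundary term by choosing $D\approx n^{49/197}$.

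Second, the incidence bound you apply inside cells. You invoke a Sz\'ekely-type crossing-number argument, which is intrinsically two-dimensional and yields $I=O(m^{2/3}N^{2/3}+m+N)$; that is too weak for the exponents here. The paper instead proves a genuinely three-dimensional point--pseudosegment incidence bound (Lemma \ref{pointPseudosegmentIncidencesLem}), of the Guth--Katz form $O(m^{1/2}n^{3/4}+m^{2/3}n^{1/3}B^{1/3}+m+N)$, where $B$ caps the number of curves on any low-degree surface. The $m^{1/2}n^{3/4}$ and $B$-dependent terms are exactly where the three-dimensional geometry is exploited, and without them the optimization in Section \ref{optimizeParamsSec} does not close. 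Finally, the ``clusters of circles on spheres'' difficulty is handled by a fairly involved stopping-time/multi-level partitioning lemma in a six-dimensional dual space (Lemma \ref{circlePartitioningLem}, with its Cases (A), (B), (C)) together with the non-degenerate-sphere bound of Apfelbaum--Sharir; your sketch correctly flags this as the hard part but does not supply a mechanism, and a naive ``rich/poor'' split of spheres does not by itself control the contribution of circles trapped on spheres. The technical heart you defer---the cutting lemma itself (Theorem \ref{cuttingSpaceCurvesLem} and Corollary \ref{cuttingCirclesR3})---is indeed the main new ingredient, and it is proved by lifting circles to $\RR^4$ and cutting depth cycles, again a genuinely three-dimensional (or four-dimensional) argument rather than a constant-degree partition plus case analysis on surfaces.
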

This is a small improvement over the previous bound of $O(n^{3/2})$, which was proved independently by Kaplan, Matou\u{s}ek, Pat\'akov\'a, and Sharir in \cite{KMSS} and by the author in \cite{Z}. However, it is still far from the conjectured optimal bound of $O(n^{4/3})$.

To put Theorem \ref{mainThm} in context, we will give a brief history of incidence geometry in Euclidean space. In \cite{KST}, K\H{o}v\'ari, S\'os, and Tur\'an showed that if $G$ is a bipartite graph with edge sets of size $m$ and $n$ that does not contain an induced copy of $K_{s,t}$, then $G$ has at most $t^{1/s}mn^{1-1/s}+sn$ edges. This theorem can be used to prove many results in incidence geometry. For example, since every pair of distinct points uniquely determines a line, there are $O(n^{3/2})$ incidences between $n$ points and $n$ lines in the plane. Similarly, since at most two unit spheres can pass through any three points in $\RR^3$, there are $O(n^{5/3})$ unit distances spanned by $n$ points in $\RR^3$.

However, the incidence theorems given by the K\H{o}v\'ari-S\'os-Tur\'an theorem are frequently not sharp. For example, Szemer\'edi and Trotter proved in \cite{SzTr} that $n$ points and $n$ lines in the plane can have at most $O(n^{4/3})$ incidences, and this is sharp. To do this, they employed a technique now known as ``partitioning + K\H{o}v\'ari-S\'os-Tur\'an.'' In short, they decomposed the plane into a union of open connected sets (called ``cells''), plus a ``boundary.'' Each point in the plane lies in at most one of these cells. Each line can intersect several of these cells, but the number of cells that each line can intersect is controlled. Szemer\'edi and Trotter then examined the collection of points and lines inside each cell, applied the K\H{o}v\'ari-S\'os-Tur\'an theorem, and summed the resulting contribution over all cells in the partition.

In \cite{CEGSW}, Clarkson, Edelsbrunner, Guibas, Sharir, and Welzl systematically extended this technique to prove incidence theorems in the plane and in higher dimensions. Amongst many other results, they proved that $n$ points in $\RR^3$ span $O(n^{3/2}\beta(n))$ unit distances, where $\beta(n)$ is a very slowly growing function. In \cite{GK}, Guth and Katz developed a new partitioning theorem that has led to a revolution in combinatorial geometry. Amongst many other results, this new partitioning theorem allows one to slightly sharpen the methods from \cite{CEGSW} to show that $n$ points in $\RR^3$ span $O(n^{3/2})$ unit distances. This was done independently by Kaplan, Matou\u{s}ek, Pat\'akov\'a, and Sharir in \cite{KMSS} and by the author in \cite{Z}. %Example \ref{N32Distances} below suggests that $O(n^{3/2})$ is the strongest bound for the unit distance problem that be attained using the ``partitioning + K\H{o}v\'ari-S\'os-Tur\'an'' technique.

Although many technical difficulties still abound, the ``partitioning + K\H{o}v\'ari-S\'os-Tur\'an'' technique is now well understood and has been used to make progress on a wide variety of incidence problems. With the notable exception of the Szemer\'edi-Trotter theorem for points and lines, however, this technique rarely yields bounds that are conjectured to be sharp.

In \cite{ArS}, Aronov and Sharir developed a new method for proving incidence bounds for points and circles in the plane that gives stronger results than the ``partitioning + K\H{o}v\'ari-S\'os-Tur\'an'' method. Aronov and Sharir ``cut'' a set of circles into ``pseudo-segments'' (a set of Jordan arcs are called pseudo-segments if they have the same combinatorial properties as line segments. In particular, each pair of points in the plane is incident to at most one arc). They then applied a variant of the Szemer\'edi-Trotter theorem to this set of pseudo-segments. In \cite{SZ}, Sharir and the author extended this cutting method from circles to general algebraic curves. This yielded an incidence theorem for points and curves in the plane that is stronger than the one given by the ``partitioning + K\H{o}v\'ari-S\'os-Tur\'an'' method.

The unit distance problem in $\RR^3$ can be re-cast as an incidence problem involving points and circles in $\RR^3$. In \cite{SSZ2}, Sheffer, Sharir, and the author used the ``partitioning + K\H{o}v\'ari-S\'os-Tur\'an'' method to obtain a new bound for incidences between points and circles in three dimensions (this bound is stronger than the one from \cite{ArS}, because the three-dimensionality of the point-circle arrangement is exploited). Perhaps unsurprisingly, this point-circle bound is exactly what is needed\footnote{Recovering the  $O(n^{3/2})$  bound for the unit distance problem using the  bound from \cite{SSZ2} still requires some careful arguments, since the bound from from \cite{SSZ2} contains several terms that depend on the number of circles contained in a common plane or sphere, so this degeneracy must be carefully controlled.} to recover the existing $O(n^{3/2})$  bound for the unit distance problem in $\RR^3$.

In the present paper, we extend the cuttings method developed in \cite{SZ} from plane curves to circles in $\RR^3$. This gives us a new incidence bound for points and circles in $\RR^3$ that is stronger than the one from \cite{SSZ2}, and this in turn yields a new, improved bound on the number of unit distances in $\RR^3$.

As is often the case with incidence bounds in higher dimensions, there are delicate issues regarding degeneracy. Many of the incidence bounds for points and curves in $\RR^3$ are stronger than the corresponding incidence bounds for points and curves in the plane. On the face of it, this appears suspicious, since any arrangement of points and curves in the plane can be embedded in $\RR^3$, and the number of incidences remains unchanged. To obtain stronger incidence theorems, we must prohibit these types of degenerate configurations from occurring. Much of the technical complexity of this paper comes from navigating between the possible ``degenerate'' and ``non-degenerate'' configurations of points and circles.

Finally, we remark that it is not always possible to improve upon the $O(n^{3/2})$ bound for the unit distance problem if the Euclidean metric is replaced by a different metric. In Example \ref{N32Distances} below, we give a semi-algebraic metric in which $n$ points can span $\Theta(n^{3/2})$ unit distances. The metric from Example \ref{N32Distances} can also be modified so that it is smooth (though after doing so, it is no longer semi-algebraic). 
\begin{example}\label{N32Distances}
Let $d_{*}$ be the metric whose unit ball is given by $x_3^2 \leq (1- x_1^2 - x_2^2)^2$, i.e. 
$$
d_{*}(x,y)= (x_1-y_1)^2 + (x_2-y_2)^2+|x_3-y_3|.
$$
Let $b$ be a positive integer. Let $A = \{0,1/b,2/b,\ldots,3\}$ and let $B = \{0, 1/b^2, 2/b^2,\ldots,9\}$. Let $\pts=A\times A\times B.$ Then for each point $x\in \pts$ with $ 1\leq x_1,x_2,\leq 2$ and $1\leq x_3\leq 2$, there are at least $b^2$ points $y\in\pts$ with $d_*(x,y)=1$; indeed, every point $y$ with $|y_1-x_1|\leq 1$, $|y_2-x_2|\leq 1$, and $y_3 = x_3 + (y_1-x_1)^2 + (y_2-x_2)^2-1$ will be contained in $\pts$. Thus $\pts$ spans at least $b^6 = \Omega(|\pts|^{3/2})$ unit distances.
\end{example}
The key property of the Euclidean metric that we use (which is lacking in Example \ref{N32Distances}) is that there is a one-to-one correspondence between circles in $\RR^3$ of radius $0<r<1$ and pairs of distinct unit spheres whose centers have distance less than one. Translates of the unit paraboloid $x_3=x_1^2+x_2^2$ from Example \ref{N32Distances} do not have this property: it is possible for many such translates to intersect in a common curve. 

\subparagraph{Update 3/15/2021} After this paper was published, Micha Sharir pointed out an error in the statement and proof of Lemma \ref{spaceCirclesLeadToDepthCyclesLem}, and proposed a fix. Previously, Lemma \ref{spaceCirclesLeadToDepthCyclesLem} claimed that if $C$ and $C^{\prime}$ are two circles in $\RR^3$ that intersect at two points $x$ and $y$, then under some mild assumptions, the projections of the shorter circular arcs between $x$ and $y$ to the $x_1x_2$ plane will always intersect an even number of times. In particular, Lemma \ref{spaceCirclesLeadToDepthCyclesLem} claimed that the arcs must lift to a depth cycle in $\RR^4$. While this claim was wrong as stated, it is true after applying a suitable orthogonal transformation, provided the arcs are sufficiently short. The statement and proof of Lemma \ref{spaceCirclesLeadToDepthCyclesLem}, and the proof of Corollary \ref{cuttingCirclesR3} have been updated to fix this problem. Further discussion of the issue can be found in \cite{Erratum} The rest of the paper remains unchanged.    

\section{Preliminaries}
\subsection{Notation}
Let $\pts$ be a set of points in $\RR^d$ and let $\mathcal{Z}$ be a set of sets in $\RR^d$ (usually the sets in $\mathcal{Z}$ will be algebraic varieties). We define 
$$
\mathcal{I}(\pts,\mathcal{Z})=\{(p,Z)\in\pts\times\mathcal{Z}\colon p\in Z\}.
$$ 
We define $I(\pts,\mathcal{Z})=|\mathcal{I}(\pts,\mathcal{Z})|$.

If $W\subset \RR^d$, we define
$$
W_{\mathcal{Z}}=\{Z\in\mathcal{Z}\colon Z\subset W\}.
$$

We define the set of two-rich points
$$
\pts_2(\mathcal{Z})=\{p\in\RR^d\colon p\ \textrm{is contained in at least two sets from}\ \mathcal{Z}\}.
$$

If $\omega\subset\RR^d$, we define
\begin{equation}\label{defnZOmega}
\mathcal{Z}_\omega =\{Z\in\mathcal{Z}\colon \omega\cap Z\neq\emptyset\}.
\end{equation}

Let $F$ and $G$ be functions. We say $F = O(G)$ or $F\lesssim G$ if there exists an absolute constant $A$ so that $F \leq AG$. We say $F = O_D(G)$ or $F\lesssim_D G$ if the constant $A$ can depend on the parameter $D$. We say $F\lessapprox_{\eps} G$ if there is an absolute constant $A$ so that $F\leq An^{A\eps}G$. The meaning of the variable $n$ will always be apparent from context. 

\subsection{Real algebraic varieties}\label{realAlgVarietySec}
In this section we will recall some standard results about real algebraic varieties that will be needed in later sections. A \emph{real algebraic variety} is a subset of $\RR^d$ that is the common zero-locus of a finite set of polynomials. If $V\subset\RR^d$ is an algebraic variety, the \emph{dimension} of $V$ is the largest integer $e$ so that $V$ contains a subset homeomorphic to the $e$-dimensional unit cube $(0,1)^e$. Further details can be found in \cite{BCR}. 

If $V\subset\RR^d$ is an algebraic variety of dimension $e<d$, then there exists an orthogonal projection $\pi\colon\RR^d\to\RR^{e+1}$ so that $\pi(V)$ is an algebraic variety of dimension $e$. Indeed, ``most'' orthogonal projections have this property---if we give the set of orthogonal projections $\pi\colon\RR^d\to\RR^{e+1}$ the structure of a real algebraic variety, then the set of projections for which this statement fails is contained in a proper Zariski closed subset. If $V\subset\RR^d$ can be defined using polynomials of degree $\leq t$, then the projections $\pi(V)$ discussed above can be defined using polynomials whose degree is bounded by a function that depends only on $d$ and $t$.

The set $I(V)\subset\RR[x_1,\ldots,x_d]$ is the ideal of polynomials that vanish on $V$. If $V$ is an algebraic variety of dimension $e$, then the \emph{singular locus} $V_{\operatorname{sing}}$ is the set of points of $V$ that are singular in dimension $e$. If $I(V)$ is generated by the polynomials $P_1,\ldots,P_s$, then $V_{\operatorname{sing}}$ is the set of points $p\in V$ for which the matrix $[\nabla P_1(p),\ldots,\nabla P_s(p)]$ has rank less than $d-e$; this set is independent of the choice of generators $P_1,\ldots,P_s$. 

The set of \emph{regular points} of $V$ is given by $V_{\operatorname{reg}}=V\backslash V_{\operatorname{sing}}$. If $p\in V_{\operatorname{reg}}$, then there is a (Euclidean) open set $O$ containing $p$ so that $O\cap V$ is a $e$-dimensional smooth manifold. Again, further details can be found in \cite{BCR}. We have that $\dim(V_{\operatorname{sing}})<\dim(V)$, and if $V\subset\RR^d$ can be defined by polynomials of degree at most $t$, then $V_{\operatorname{sing}}$ can be defined by polynomials of degree bounded by a function depending only on $d$ and $t$. If $P\in\RR[x_1,\ldots,x_d]$ is square-free, then each point $p\in Z(P)_{\operatorname{reg}}$ is contained in the zero-locus of exactly one irreducible component of $P$. Furthermore, there is a non-zero vector $v\in\RR^d$ so that $Z(P)_{\operatorname{sing}}$ is contained in $Z(P)\cap Z(v\cdot\nabla P)$, and the later variety has dimension at most $e-1$.

A \emph{semi-algebraic set} is a subset of $\RR^d$ that satisfies a finite list of polynomial equalities and inequalities. In particular, a real algebraic variety is a semi-algebraic set. We define the complexity\footnote{This definition is not standard, but since we will only consider semi-algebraic sets of bounded complexity, any reasonable definition of complexity will suffice.} of a semi-algebraic set $V$ to be the minimum value of $t$ so that $V$ can be defined with $\leq t$ polynomial equalities and inequalities, each of which have degree $\leq t$. If $V\subset\RR^d$ is a semi-algebraic set, then the image of $V$ under a projection $\pi:\RR^d\to\RR^e$ is also semi-algebraic. If $V$ has complexity $t$, then the image of the projection $\pi(V)$ has complexity that is bounded by a function of $t$ and $d$. 

The dimension of a semi-algebraic set can be defined analogously to that of a real algebraic set; see \cite{BCR} for details. If $V\subset\RR^d$ is a semi-algebraic set of dimension $e$ and complexity $t$ then we can write $V\subset W\cup X$, where $W$ is a smooth $e$-dimensional manifold and $X$ is a real algebraic variety of dimension strictly smaller than $e$. Furthermore, $X$ is defined by polynomials whose degree is bounded by a function of $d$ and $t$.

%Finally, we will recall some results from \cite[Section 4.7]{Z2}. Let $V\subset\RR^d$ be a one-dimensional variety and let $x\in V$ be a non-isolated point. Then there is a Euclidean open set $O$ containing $x$ so that $O\cap (V\backslash\{x\})$ is a union of disjoint simple curves. These curves are called the branches of $V$ at $x$. If $V$ is defined by polynomials of degree at most $t$, then the number of branches of $V$ at $x$ is bounded by a function that depends only on $d$ and $t$.

\subsection{Polynomial partitioning}\label{polyPartitioningSection}
The revolutionary discrete polynomial partitioning theorem developed by Guth and Katz in \cite{GK} has led to many new incidence theorems. Since then, the partitioning theorem has been extended by Guth in \cite{G1} from points to general algebraic varieties. We will use a variant of this theorem from \cite{GZ2}, which is a corollary of \cite[Theorem 0.3]{G1}. The following theorem allows us to partition sets of algebraic varieties using a partitioning polynomial $P$ in the variables $x_1,\ldots,x_d$ that is independent of some of the variables.

\begin{thm}[\!\!\cite{GZ2}, Corollary 2.4]\label{projectedPartitioning}
Let $\mathcal{V}$ be a set of $n$ algebraic varieties in $\RR^d$, each of which has dimension at most $e$ and is defined by polynomials of degree at most $t$. Let $e\leq f\leq d$. Then for each $D\geq 1$, there is a polynomial $P\in\RR[x_1,\ldots,x_d]$ of the form $P(x_1,\ldots,x_d)=Q(x_1,\ldots,x_f)$ of degree at most $D$ so that $\RR^{d}\backslash Z(P)$ is a union of $O_d(D^f)$ cells, and $O_{d,t}(nD^{e-f})$ varieties from $\mathcal{V}$ intersect each cell.
\end{thm}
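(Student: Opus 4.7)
The plan is to reduce to the standard polynomial partitioning theorem for varieties (\cite[Theorem 0.3]{G1}) applied in $\RR^f$ rather than $\RR^d$, by first projecting $\mathcal{V}$ onto its first $f$ coordinates and then lifting the resulting partition back up to $\RR^d$. Concretely, I would let $\pi\colon \RR^d \to \RR^f$ denote the projection $(x_1,\ldots,x_d) \mapsto (x_1,\ldots,x_f)$, and for each $V \in \mathcal{V}$ define $V' \subset \RR^f$ to be the Zariski closure of $\pi(V)$. Since projections do not increase dimension and Zariski closure preserves dimension for semi-algebraic sets, each $V'$ is an algebraic variety in $\RR^f$ of dimension at most $e$. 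Moreover, by Tarski--Seidenberg combined with the discussion of projections in Section \ref{realAlgVarietySec}, the complexity of $\pi(V)$, and therefore the degree of a defining polynomial system for $V'$, is bounded by a constant depending only on $d$ and $t$.

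Next, I would apply \cite[Theorem 0.3]{G1} in $\RR^f$ to the collection $\{V' : V \in \mathcal{V}\}$ of $n$ algebraic varieties of dimension $\leq e$ and degree $O_{d,t}(1)$. This produces a polynomial $Q \in \RR[x_1,\ldots,x_f]$ of degree at most $D$ such that $\RR^f \setminus Z(Q)$ decomposes into $O_d(D^f)$ open connected cells, each meeting at most $O_{d,t}(nD^{e-f})$ of the $V'$. Setting $P(x_1,\ldots,x_d) := Q(x_1,\ldots,x_f)$, one has $Z(P) = Z(Q) \times \RR^{d-f}$, so (using connectedness of $\RR^{d-f}$) the connected components of $\RR^d \setminus Z(P)$ are precisely the cylinders $C \times \RR^{d-f}$ as $C$ ranges over cells of $\RR^f \setminus Z(Q)$; this gives the required $O_d(D^f)$ cells. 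If $V \in \mathcal{V}$ meets $C \times \RR^{d-f}$, then $\pi(V) \cap C \neq \emptyset$ and hence $V' \cap C \neq \emptyset$, so the number of $V$'s meeting any cell is at most the number of $V'$'s meeting the corresponding $C$, which is $O_{d,t}(nD^{e-f})$.

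The main step requiring care is the degree bound on the $V'$. Guth's theorem produces constants that depend on the degree of the input varieties, so to obtain a final bound of the form $O_{d,t}(\cdot)$ it is essential that each $V'$ be defined by polynomials whose degree is controlled purely by $d$ and $t$, independently of $n$ or $D$. This follows from Tarski--Seidenberg together with the fact that the Zariski closure of a semi-algebraic set of bounded complexity can be defined using polynomials of bounded degree, but it is the only delicate piece of bookkeeping in an otherwise routine reduction.
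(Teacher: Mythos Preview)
Your proposal is correct and matches the natural derivation: the paper itself does not prove Theorem~\ref{projectedPartitioning} but cites it from \cite{GZ2}, explicitly noting that it is a corollary of \cite[Theorem~0.3]{G1}; your projection-then-lift argument is exactly the standard way to extract this corollary from Guth's theorem. The one point you correctly flag as delicate---that the Zariski closures $V'$ are defined by polynomials of degree $O_{d,t}(1)$---is indeed the only nontrivial bookkeeping, and your appeal to effective Tarski--Seidenberg plus bounded-degree Zariski closure handles it.
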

When $e=0$ and $f=d$, then Theorem \ref{projectedPartitioning} is simply the original discrete polynomial partitioning theorem from \cite{GK}. When $e\geq 0$ and $f=d$, then Theorem \ref{projectedPartitioning} is the partitioning for varieties theorem from \cite{G1}.

We will also need the ``partitioning on an algebraic hypersurface'' theorem from \cite{Z}. The variant we will use here is \cite[Theorem 2.3]{Z2}. This result makes reference to a ``real ideal.'' We will not define this term here, but we will recall the following result:
\begin{lem}[Lemma 2.1, \cite{Z2}]\label{irreducibleIdealLem}
Let $P$ be an irreducible polynomial. Then there is a polynomial $Q$ with $0<\deg Q \leq \deg P$ so that $Q$ generates a real ideal and $Z(P)\subset Z(Q)$. 
\end{lem}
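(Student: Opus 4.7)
The plan is to argue by induction on $\deg P$. The base case $\deg P = 1$ is trivial: a linear $P$ cuts out a $(d{-}1)$-dimensional hyperplane, its full vanishing ideal is $(P)$, so $(P)$ is real; take $Q = P$.

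For the inductive step with $\deg P \geq 2$, I would split on $\dim Z(P)$. Case 1: $\dim Z(P) = d - 1$. I claim $(P)$ itself is real, so $Q = P$ again works. The key observation is that, since $P$ is irreducible, its complex zero set $Z_{\CC}(P) \subset \CC^d$ is complex-irreducible of complex dimension $d-1$, and any proper complex subvariety of $Z_{\CC}(P)$ has complex dimension at most $d-2$, hence contains at most a $(d{-}2)$-dimensional real variety of real points. The hypothesis $\dim Z(P) = d-1$ therefore forces $Z_{\RR}(P)$ to be Zariski dense in $Z_{\CC}(P)$, and consequently $I(Z_{\RR}(P)) = (P)$.

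Case 2: $\dim Z(P) \leq d - 2$. Here I would argue that $\nabla P$ must vanish identically on $Z(P)$: if $\partial_i P(p) \neq 0$ at some $p \in Z(P)$, the implicit function theorem produces a $(d{-}1)$-dimensional smooth patch of $Z(P)$ through $p$, contradicting the dimension bound. Since $\deg P \geq 2$ and we are in characteristic zero, at least one $\partial_i P$ is a nonzero polynomial of degree strictly less than $\deg P$; call it $R$. Then $Z(P) \subset Z(R)$. Factor $R$ into its distinct irreducible factors $R_1, \ldots, R_k$ (multiplicities discarded), and apply the inductive hypothesis to each $R_j$ to produce a polynomial $Q_j$ with $0 < \deg Q_j \leq \deg R_j$, $Z(R_j) \subset Z(Q_j)$, and $(Q_j)$ real. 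Let $Q$ be the square-free part of $Q_1 \cdots Q_k$. Then $\deg Q \leq \sum_j \deg R_j \leq \deg R < \deg P$, $Z(Q) = \bigcup_j Z(Q_j) \supset \bigcup_j Z(R_j) = Z(R) \supset Z(P)$, and $(Q)$ is real because the irreducible factors of $Q$ are precisely the distinct irreducible factors appearing in the $Q_j$'s, each of which generates a real ideal, while intersections of real ideals are real.

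The main obstacle will be Case 1: the assertion that a real-$(d{-}1)$-dimensional irreducible hypersurface automatically generates a real ideal. The Zariski-density argument above is the cleanest path, but implicitly uses a standard fact about the interplay between real and complex algebraic sets. A secondary bookkeeping issue is verifying that the square-free product of generators of principal real ideals again generates a real ideal; this follows from the characterization that $(Q)$ is real iff $Q$ is square-free and every irreducible factor of $Q$ has a real zero set of dimension $d-1$.
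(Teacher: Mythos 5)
The paper itself does not prove this lemma; it is imported verbatim from \cite{Z2} (Lemma 2.1) as a black box, so I will assess your argument on its own merits. Your induction on $\deg P$ is the right strategy and is, in substance, the standard proof.

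Your Case~2 is correct: if $\dim Z(P)\leq d-2$ then every real zero of $P$ is a critical point, so $Z(P)\subset Z(\partial_i P)$ for every $i$; in characteristic zero and $\deg P\geq 2$, some $\partial_i P$ is a nonzero polynomial of degree $\geq 1$ (take $i$ appearing in a top-degree monomial, so that you really do get a \emph{nonconstant} $R$, not just a nonzero one—worth stating explicitly since you then factor $R$). The induction, the square-free-part bookkeeping, and the observation that a square-free polynomial generates a real ideal iff each irreducible factor does, are all correct; the last fact is essentially Theorem 4.5.1 of \cite{BCR}.

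The only imprecision is in Case~1, in the sentence ``since $P$ is irreducible, its complex zero set $Z_{\CC}(P)\subset\CC^d$ is complex-irreducible.'' That implication is false as stated: $x^2+y^2\in\RR[x,y]$ is $\RR$-irreducible but factors as $(x+iy)(x-iy)$ over $\CC$. What is true is that $\RR$-irreducibility \emph{together with} $\dim_{\RR} Z(P)=d-1$ forces $\CC$-irreducibility: if $P=c\,f\bar f$ over $\CC$, then the real zeros of $P$ lie in $Z_{\CC}(f)\cap Z_{\CC}(\bar f)$, a variety of complex dimension $\leq d-2$, contradicting the dimension hypothesis. Once you have $\CC$-irreducibility, your Zariski-density argument (or, more directly, the sign-change criterion of \cite{BCR}, Theorem 4.5.1) gives $I(Z_{\RR}(P))=(P)$. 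You correctly flag this as the substantive step; I would just rephrase so the two hypotheses are invoked jointly rather than sequentially. With that repair the proof is complete.
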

For our purposes, Lemma \ref{irreducibleIdealLem} says that we can assume without loss of generality that every irreducible polynomial generates an irreducible ideal.

\begin{thm}\label{partitioningOnAVariety}
Let $P\in\RR[x_1,\ldots,x_d]$ be an irreducible polynomial of degree $D$ that generates a real ideal and let $c>0$. Let $\pts\subset Z(P)$ be a set of $m$ points. Then for each $E\geq cD$, there exists a polynomial $Q\in\RR[x_1,\ldots,x_d]$ of degree at most $E$ so that $Q$ does not vanish identically on $Z(P)$, and $Z(P)\backslash Z(Q)$ is a union of $O_{c,d}(DE^{d-1})$ cells, each of which contains $O_{c,d}(m/(DE^{d-1}))$ points from $\pts$.
\end{thm}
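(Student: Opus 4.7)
The plan is to imitate the iterated polynomial ham sandwich argument that proves the original Guth--Katz partitioning theorem, but to carry out the ham sandwich step inside the coordinate ring of $Z(P)$ rather than inside $\RR[x_1,\ldots,x_d]$. The ambient dimension $d$ is thereby replaced with the intrinsic dimension $d-1$ of $Z(P)$, and the extra factor of $D$ in the cell count appears through the Hilbert function of the hypersurface $Z(P)$.

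First I would record the relevant dimension count. For any $F \geq 1$, let $V_F$ denote the image of $\RR[x_1,\ldots,x_d]_{\leq F}$ in the quotient $\RR[x_1,\ldots,x_d]/(P)$. A standard computation gives $\dim V_F = \binom{F+d}{d} - \binom{F-D+d}{d}$ once $F \geq D$, which is $\Theta_{c,d}(DF^{d-1})$ in the regime $F \gtrsim D$. The hypotheses on $P$ enter here: because $P$ is irreducible and generates a real ideal, the real Nullstellensatz implies that a polynomial vanishes identically on $Z(P)$ (as a real variety) if and only if it lies in the ideal $(P)$. Consequently a polynomial $Q$ fails to vanish identically on $Z(P)$ precisely when its class in $V_{\deg Q}$ is nonzero. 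Lemma~\ref{irreducibleIdealLem} is exactly what allows us to reduce to this situation from the hypothesis that $P$ is merely irreducible.

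Next I would iterate polynomial ham sandwich on $Z(P)$. At stage $k$, the previously chosen polynomials will have partitioned $\pts$ into $2^{k-1}$ subsets, one per open sign cell of their product. Stone--Tukey polynomial ham sandwich (applied after replacing each point with a small open ball of equal measure) furnishes a polynomial $Q_k$ of degree $F_k$ whose zero set bisects all $2^{k-1}$ subsets simultaneously, provided $\dim V_{F_k} > 2^{k-1}$. By the dimension count it suffices to take $F_k \lesssim_{c,d} (2^{k-1}/D)^{1/(d-1)}$, and strict positivity of the class in the quotient guarantees that $Q_k$ does not vanish identically on $Z(P)$. After $K$ stages the product $Q = Q_1 \cdots Q_K$ has degree $\sum_{k=1}^K F_k = O_{c,d}((2^K/D)^{1/(d-1)})$; choosing $K$ with $2^K \asymp D E^{d-1}$ therefore yields $\deg Q \leq E$ and places $O_{c,d}(m/(DE^{d-1}))$ points of $\pts$ in each of the $2^K$ sign cells.

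Finally, I would bound the number of connected components of $Z(P)\setminus Z(Q)$ using a Milnor--Thom or Barone--Basu estimate applied to the $(d-1)$-dimensional variety $Z(P)$ cut by the polynomial $Q$. When $E \geq cD$ the dominant term is $O_{c,d}(DE^{d-1})$, matching the number of sign cells produced by the iteration (up to constants), and completing the proof. I expect the main obstacle to be balancing the degree budget: the inequality $\sum_k F_k \leq E$ must hold \emph{simultaneously} with $\dim V_{F_k} > 2^{k-1}$ at every stage, which is the reason the hypothesis $E \geq cD$ is needed in order to keep us in the Hilbert-function regime $\Theta(DF^{d-1})$ throughout the iteration. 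A secondary technical nuisance, but a routine one, is the standard measure-theoretic discretization required to invoke Stone--Tukey on finite point sets.
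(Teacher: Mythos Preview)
The paper does not prove Theorem~\ref{partitioningOnAVariety}; it is imported as a black box from \cite[Theorem~2.3]{Z2} (with the original version in \cite{Z}), so there is no in-paper argument to compare against. Your outline is the standard proof and matches what is done in those references: run the Guth--Katz iterated ham sandwich inside a linear subspace of $\RR[x_1,\ldots,x_d]_{\leq F}$ that maps isomorphically onto $\RR[x_1,\ldots,x_d]_{\leq F}/(P)$, use the real Nullstellensatz (valid because $(P)$ is a real ideal) to guarantee that a nonzero class means a polynomial genuinely nonvanishing on $Z(P)$, and finish with a Milnor--Thom/Barone--Basu count on the $(d-1)$-dimensional hypersurface.

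One point to tighten. In the early stages of the iteration $2^{k-1}$ is small and the needed degree is $F_k \lesssim 2^{(k-1)/d}$, coming from the full space $\RR[x]_{\leq F_k}$ (the quotient map is injective for $F_k < D$); the asymptotic $F_k \lesssim (2^{k-1}/D)^{1/(d-1)}$ you wrote only kicks in once $F_k \geq D$, i.e.\ once $2^{k-1} \gtrsim D^d$. The two regimes match at the handoff, the early stages contribute $O_d(D)$ to $\sum_k F_k$ by a geometric sum, and this is absorbed by the hypothesis $E \geq cD$. So your identification of where $E \geq cD$ enters is correct, but the phrase ``throughout the iteration'' is slightly off: the Hilbert-function regime $\Theta(DF^{d-1})$ governs only the late stages. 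This is cosmetic; the argument goes through as you describe.
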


\subsection{Connected components of sign conditions}
We will apply the partitioning theorems from Section \ref{polyPartitioningSection} to partition $\RR^d$ into cells. The following theorem controls how many of these cells an algebraic variety can intersect.
\begin{thm}[Barone-Basu \cite{BB}]\label{numberOfConnectedComponents}
Let $V\subset\RR^d$ be an algebraic variety of dimension $e$ that is defined by polynomials of degree at most $t$. Let $P\in\RR[x_1,\ldots,x_d]$ be a polynomial of degree at most $D$. Then $V\backslash Z(P)$ contains $O_{d,t}(D^e)$ connected components. 
\end{thm}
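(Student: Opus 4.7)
The plan is to combine a Morse-theoretic critical point count with an infinitesimal deformation argument, exploiting the low dimension $e$ of $V$ to obtain $D^e$ rather than the naive $D^d$ that a direct application of Milnor--Thom would yield.

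First, I would reduce to a situation where $V$ is a smooth complete intersection. Using the standard results recalled in Section~\ref{realAlgVarietySec}, at a regular point $V$ is cut out locally by $d-e$ polynomials of degree $\leq t$, so after replacing $V$ by its decomposition into irreducible components and pushing the singular locus (of strictly smaller dimension) into a lower-dimensional piece to be handled by induction on $e$, it is enough to bound the number of components of $V_{\operatorname{reg}} \setminus Z(P)$ where $V_{\operatorname{reg}}$ is a smooth $e$-manifold. Next, replace $V$ by a smooth perturbation $V_{\vec\epsilon} = \{g_1 = \epsilon_1, \ldots, g_{d-e} = \epsilon_{d-e}\}$ for a generic small $\vec\epsilon$, chosen in a real closed extension so that $V_{\vec\epsilon}$ is a smooth complete intersection that approximates $V$. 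A standard upper semicontinuity argument shows that the number of connected components of $V\setminus Z(P)$ is at most the number of components of $V_{\vec\epsilon} \cap \{P \geq \delta\}$ for infinitesimal $\delta > 0$, which is a closed smooth manifold with boundary (intersected with a large ball to ensure compactness, introducing only $O_{d,t}(1)$ additional components).

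The core step is to count components of this manifold by Morse theory. Each connected component contains a global minimum of a generic linear function $\ell$, which is a critical point of $\ell$ restricted to either the interior $V_{\vec\epsilon}$ or the boundary $V_{\vec\epsilon} \cap \{P = \delta\}$. Interior critical points are zero-dimensional, defined by $g_i = \epsilon_i$ together with the Lagrange condition that $\nabla \ell$ lies in the span of the $\nabla g_i$; all these polynomials have degree depending only on $d$ and $t$, so by Bezout there are $O_{d,t}(1)$ of them. Boundary critical points lie on $V_{\vec\epsilon} \cap \{P = \delta\}$, an $(e-1)$-dimensional smooth complete intersection; viewing this set again as a variety of dimension $e-1$ defined by polynomials whose degrees are $O(D)$ or $O_t(1)$, I would iterate the same Morse-theoretic analysis on it. This yields a recursion $N(e) \leq C(d,t)\cdot D \cdot N(e-1)$, with $N(0) = O_{d,t}(1)$, producing the bound $N(e) \leq C(d,t)^e D^e = O_{d,t}(D^e)$.

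The main obstacle I expect is making the infinitesimal deformation rigorous in a way that simultaneously handles (i) the non-smoothness of $V$, (ii) the possibly bad behavior of $Z(P)$ relative to $V$ (tangencies, singular intersections), and (iii) non-compactness. The cleanest route is to work over a non-archimedean real closed extension $\mathbb{R}\langle \epsilon, \delta \rangle$ and invoke the standard transfer/semialgebraic-Hardt-trivialization machinery from Basu--Pollack--Roy, ensuring that connected components of the original semialgebraic set are in surjective correspondence with components of its infinitesimal thickening. Once this technical framework is in place, the counting step reduces to repeated applications of Bezout to Lagrange-multiplier systems whose degrees grow linearly in $D$, which cleanly gives the $D^e$ bound with constants depending only on $d$ and $t$.
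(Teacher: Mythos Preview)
The paper does not prove this statement at all: Theorem~\ref{numberOfConnectedComponents} is quoted as a black box from Barone--Basu~\cite{BB}, with no argument given. So there is no ``paper's own proof'' to compare against; the relevant comparison is between your sketch and the actual Barone--Basu argument.

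Your outline is in the right spirit and broadly matches what Barone--Basu do: infinitesimal perturbation to a smooth complete intersection, then a critical-point count for a generic linear function, with the infinitesimals handled over a non-archimedean real closed extension via the Basu--Pollack--Roy machinery. One point to tighten: your recursion $N(e)\le C(d,t)\,D\,N(e-1)$ is not quite the right bookkeeping. When you pass to the boundary $V_{\vec\epsilon}\cap\{P=\delta\}$, the new variety is no longer defined purely by degree-$t$ polynomials; it now carries one equation of degree $D$, so the ``$t$'' in your implicit constant changes. The clean way (and the way Barone--Basu proceed) is not to recurse on the theorem statement but to write down the full Lagrange-multiplier system for boundary critical points in one shot: the equations $g_i=\epsilon_i$ (degrees $\le t$), $P=\delta$ (degree $\le D$), and the appropriate maximal minors of the Jacobian (each of degree $O_{d,t}(D)$, since exactly one row involves $\nabla P$). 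Generically this system is zero-dimensional, and B\'ezout gives $t^{O_d(1)}D^{e}$ solutions directly, which is $O_{d,t}(D^e)$. So the bound comes from a single B\'ezout application with careful degree tracking, not from an inductive recursion on $e$; your version would work but risks picking up spurious factors if the degree-$D$ equation is folded into ``$t$'' at each step.
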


\subsection{Doubly-ruled surfaces}
A (complex) \emph{algebraic curve} is a complex algebraic variety $C\subset\CC^d$ that has dimension one. A complex surface $Z\subset\CC^3$ is said to be \emph{doubly-ruled} by curves of degree $\leq t$ if at a generic point $p\in Z$, there are at least two curves of degree $\leq t$ containing $p$ and contained in $Z$. The next lemma says that algebraic surfaces that contain many intersecting curves must be doubly-ruled by curves. 
% \begin{defn}
% Let $\mathcal{C}$ be a set of complex algebraic curves in $\CC^3$. We define $\pts_2(\mathcal{C})$ to be the set of points that are contained in at least two curves from $\mathcal{C}$. If $\mathcal{C}$ is finite and no two curves share a common component, then $\pts_2(\mathcal{C})$ will also be finite.
% \end{defn}

\begin{lem}\label{manyCurvesInSurface}
For each $t\geq 1$, there is a constant $A$ so that the following holds. Let $Z\subset\CC^3$ be an irreducible variety of degree $\leq D$, and let $\mathcal{C}$ be a set of algebraic curves in $\CC^3$, each of which has degree $\leq t$ and is contained in $Z$. Suppose that no two curves share a common components, and that $|C\cap\pts_2(\mathcal{C})|\geq AD$ for at least $AD^2$ of the curves $C\in\mathcal{C}$. Then $Z$ is doubly-ruled by curves of degree $\leq t$. In particular, $\deg Z\leq 100t^2$.
\end{lem}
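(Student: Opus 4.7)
My plan is to prove the contrapositive: assume $Z$ is not doubly-ruled by curves of degree $\leq t$ and show the hypotheses must fail once $A=A(t)$ is sufficiently large. First I dispose of low-dimensional $Z$. If $\dim Z=0$, no positive-dimensional curves fit in $Z$. If $\dim Z=1$, then $Z$ is an irreducible curve, and any curve $C\in\mathcal{C}$ with $C\subset Z$ must have its unique one-dimensional component equal to $Z$; the no-shared-components hypothesis then forces $|\mathcal{C}|\leq 1$, contradicting $|\mathcal{C}|\geq AD^2\geq 2$. Hence I may assume $Z$ is an irreducible surface.

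The heart of the proof is the construction of a ``bad locus'' $W\subset Z$, defined as the Zariski closure of the set of $p\in Z$ through which at least two distinct irreducible curves of degree $\leq t$ lie on $Z$. Because $Z$ is not doubly-ruled, $W$ is a proper subvariety of $Z$, so $\dim W\leq 1$. The main obstacle — and the step requiring the most work — is establishing a degree bound of the form $\deg W\leq c(t)D$ with $c(t)$ depending only on $t$. I would do this by exhibiting a variety $V\subset\CC^3$ whose degree depends only on $t$ (not on $D$) such that $W\subset V\cap Z$; Bezout's theorem in $\CC^3$ then yields $\deg W\leq \deg V\cdot\deg Z=O_t(D)$. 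The variety $V$ is constructed by extending the classical flecnode-polynomial machinery from lines to curves of degree $\leq t$, following the doubly-ruled surfaces approach of Guth and Zahl.

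With this degree bound in hand, the combinatorial argument is a clean Bezout count. For any $C\in\mathcal{C}$ with $|C\cap\pts_2(\mathcal{C})|\geq AD$, every two-rich point on $C$ lies on at least two curves from $\mathcal{C}$, which are curves of degree $\leq t$ on $Z$, and therefore lies in $W$. If $C\not\subset W$, Bezout gives $|C\cap W|\leq \deg C\cdot\deg W=O_t(D)$, contradicting $|C\cap\pts_2(\mathcal{C})|\geq AD$ once $A$ is large (depending only on $t$). So $C\subset W$. But $W$ has at most $\deg W=O_t(D)$ one-dimensional irreducible components, and the no-shared-components hypothesis forces each such component to be used by at most one curve in $\mathcal{C}$. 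Thus the number of curves satisfying the hypothesis is $O_t(D)<AD^2$ for $A$ sufficiently large, a contradiction.

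Finally, once $Z$ has been shown to be doubly-ruled by curves of degree $\leq t$, the bound $\deg Z\leq 100t^2$ follows from the known classification of doubly-ruled surfaces by curves of bounded degree (as in the Guth-Zahl work on doubly-ruled surfaces). The entire proof thus reduces to one essentially algebraic-geometric input — the degree bound on the bad locus $W$ — after which the combinatorics collapses cleanly.
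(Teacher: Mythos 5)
The paper's proof of this lemma is a direct combination of three black-box citations from Guth--Zahl (their Lemma 8.3, Proposition 10.2, Theorem 8.1, and Theorem 3.5): many curves with many $2$-rich points imply $(2,\mathcal{C}_t,r)$-flecnodality at a generic point, which implies doubly-ruled, which implies $\deg Z\leq 100t^2$. You are instead attempting to re-derive the hard Guth--Zahl step (Lemma 8.3 + Prop.\ 10.2) from scratch via a ``bad locus'' $W$ and a Bezout count. The skeleton is in the right spirit---it is essentially the flecnodal approach---but the quantitative claims have a genuine error that breaks the argument.

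The problem is the degree bound. You assert the existence of a variety $V\subset\CC^3$ of degree $O_t(1)$, independent of $D$, with $W\subset V\cap Z$, which would give $\deg W\leq\deg V\cdot\deg Z=O_t(D)$. No such $V$ exists; the flecnode-type polynomial detecting ``two curves of degree $\leq t$ tangent to order $r$'' on a surface $Z(P)$ of degree $D$ has degree $O_{t,r}(D)$, not $O_t(1)$ (already in the classical $t=1$ case the flecnode polynomial has degree $11D-24$). Consequently the honest bound is $\deg W\leq O_t(D)\cdot D=O_t(D^2)$, not $O_t(D)$. With $\deg W=O_t(D^2)$, your per-curve Bezout step fails: intersecting the curve $C$ (degree $\leq t$) with the curve $W$ (degree $O_t(D^2)$) yields only $|C\cap W|\leq O_t(D^2)$, which does not contradict $|C\cap\pts_2(\mathcal{C})|\geq AD$ for any constant $A$. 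The fix is to apply Bezout not against the curve $W$ but against the \emph{surface} $V=Z(\mathrm{Fl})$: since $V$ has degree $O_t(D)$ and $C\not\subset V$ gives $|C\cap V|\leq t\cdot O_t(D)=O_t(D)$, one does get the needed contradiction with $AD$ for $A$ large. One then concludes $C\subset V\cap Z=W$, and since $W$ has $O_t(D^2)$ irreducible components (not $O_t(D)$ as you claim), the no-shared-components hypothesis caps the number of such $C$ at $O_t(D^2)$, which contradicts the hypothesis of $\geq AD^2$ curves once $A$ exceeds the implicit constant. So the final count survives the correction, but the intermediate degree claims are wrong as written and the Bezout must be run against $V$, not $W$. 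Separately, note that constructing the appropriate flecnode analogue for curves of degree $\leq t$ rather than lines (via Chow varieties) is itself a substantial undertaking in Guth--Zahl, which you are sketching rather than supplying.
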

\begin{proof}
We will begin by recalling several results from \cite{GZ}. These results make reference to the property of being ``$(2,\mathcal{C}_t,r)$--flecnodal.'' This is a technical concept that we will not define here, since the definition can be taken as a black-box when using results from \cite{GZ}. However, a definition can be found in \cite[Section 9]{GZ} (here $\mathcal{C}_t$ is the Chow variety of curves of degree $\leq t$).
\begin{itemize}
\item Lemma 8.3 and Proposition 10.2 from \cite{GZ} says that for each $t\geq 1$ and $r\geq 1$, there is a number $A$ so that the following holds. Let $Z\subset\CC^3$ be an irreducible surface of degree $\leq D$. Suppose that there exists a set $\mathcal{C}$ of irreducible algebraic curves of degree $\leq t$ that are contained in $Z$, and that $|C\cap\pts_2(\mathcal{C})|\geq AD$ for at least $AD^2$ of the curves $C\in\mathcal{C}$. Then $Z$ is $(2,\mathcal{C}_t,r)$--flecnodal at a generic point. 

\item Theorem 8.1 from \cite{GZ} says that for each $t\geq 1$, there exists a number $r$ with the following property: If $Z\subset\CC^3$ is an irreducible surface that is $(2,\mathcal{C}_t,r)$--flecnodal at a generic point, then $Z$ is doubly-ruled by curves of degree $\leq t$. 

\item Theorem 3.5 from \cite{GZ} says that if $Z\subset\CC^3$ is an irreducible surface that is doubly-ruled by curves of degree $\leq t$, then $Z$ has degree at most $100t^2$.
\end{itemize}
Lemma \ref{manyCurvesInSurface} now follows by combining the above results.
\end{proof}

Lemma \ref{manyCurvesInSurface} can be used to understand the structure of surfaces $Z(P)\subset\RR^3$ that contain many real curves.

\begin{defn}
A set $C\subset\RR^d$ is called a \emph{real algebraic curve} if $C$ is a real algebraic variety of dimension one. To each real algebraic curve $C\in\mathcal{C}$ we can associate a complex algebraic curve $C^*$ so that $C$ is Zariski dense in $C^*$. In particular, if $C_1$ and $C_2$ are two real algebraic curves that do not share a common component, then the complex algebraic curves $C_1^*$ and $C_2^*$ also will not share a common component. If $C$ is a real algebraic curve, we define the degree of $C$ to be the degree of the complex curve $C^*$.
\end{defn}

If $C\subset\RR^d$ is a real algebraic curve, then the image of $C$ under ``most'' projections $\pi\colon\RR^d\to\RR^2$ will be an algebraic curve of the same degree. To be more precise, if we give the set of rotations $O\colon\RR^d\to\RR^d$ the structure of an algebraic variety, and if $\pi\colon\RR^d\to\RR^2$ is the projection to the first two coordinates, then the set of projections for which the image $\pi\circ O(C)$ fails to be an algebraic curve of the same degree is contained in a proper closed sub-variety. 

Note that if $\mathcal{C}$ is a set of real algebraic curves, then $|\pts_2(\mathcal{C})|\leq|\pts_2(\mathcal{C}^*)|$, where $\mathcal{C}^*=\{C^*\colon C\in\mathcal{C}\}.$

\begin{cor}\label{lowDegreeOrFewIncidences}
For each $t\geq 1$, there is a constant $A$ so that the following holds. Let $P\in\RR[x_1,x_2,x_3]$ be an irreducible polynomial of of degree $\leq D$. Let $\mathcal{C}$ be a set of $n$ real algebraic curves, each of degree at most $t$, each of which is contained in $Z(P)$, and no two of which share a common component. Then either $\deg P \leq 100t^2$, or for all but at most $AD^2$ curves $C\in\mathcal{C}$, we have $|C\cap \pts_2(\mathcal{C})|\leq AD$.
\end{cor}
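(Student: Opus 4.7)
The plan is to reduce the corollary to Lemma~\ref{manyCurvesInSurface} by passing from the real family $\mathcal{C}$ to its complexification $\mathcal{C}^*=\{C^*\colon C\in\mathcal{C}\}$, a family of complex algebraic curves contained in $Z(P)\subset\CC^3$ (viewed over $\CC$). Three ingredients from the paragraphs preceding the corollary make this transfer clean: $\deg C^*=\deg C\leq t$; no two of the $C^*$ share a common component, since the same is true for $\mathcal{C}$; and every real $2$-rich point of $\mathcal{C}$ is in particular a complex $2$-rich point of $\mathcal{C}^*$, so $|C\cap\pts_2(\mathcal{C})|\leq|C^*\cap\pts_2(\mathcal{C}^*)|$ for each $C\in\mathcal{C}$. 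Letting $A_0$ denote the constant supplied by Lemma~\ref{manyCurvesInSurface} for the input $t$, I take $A=\max(A_0,1/4)$. The only subtlety is that Lemma~\ref{manyCurvesInSurface} requires an \emph{irreducible} complex surface, while $P$ is only irreducible over $\RR$; I therefore split into two cases based on whether $Z(P)$ remains irreducible over $\CC$.

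If $P$ is also irreducible over $\CC[x_1,x_2,x_3]$, then $Z(P)\subset\CC^3$ is an irreducible complex surface of degree $\leq D$ containing every $C^*$. Suppose, toward a contradiction, that $\deg P>100t^2$ and yet more than $AD^2\geq A_0 D^2$ of the curves $C$ satisfy $|C\cap\pts_2(\mathcal{C})|\geq AD\geq A_0D$. The inequality on $2$-rich points above transfers this lower bound to $\mathcal{C}^*$, so Lemma~\ref{manyCurvesInSurface} applies and forces $Z(P)$ to be doubly-ruled by curves of degree $\leq t$ and hence of degree at most $100t^2$, contradicting $\deg P>100t^2$.

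If instead $P$ is reducible over $\CC$, I argue that $\mathcal{C}$ is already small. Because $P$ has real coefficients, complex conjugation permutes its $\CC$-irreducible factors; combined with $\RR$-irreducibility this forces $P=cQ\bar Q$, where $Q\in\CC[x_1,x_2,x_3]$ is irreducible of degree $D/2$ and $Q,\bar Q$ are coprime (otherwise $Q$ would be a scalar multiple of a real polynomial and $P$ would factor over $\RR$). For any real $p$ with $P(p)=0$ one has $\bar Q(p)=\overline{Q(p)}$, so $Q(p)=\bar Q(p)=0$; hence the real zero set of $P$ is contained in the complex curve $W:=\{q\in\CC^3\colon Q(q)=\bar Q(q)=0\}$, which by Bezout satisfies $\deg W\leq(D/2)^2=D^2/4$. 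Each $C^*$ is therefore a union of irreducible components of $W$, and the no-common-component hypothesis gives $n\leq\deg W\leq D^2/4\leq AD^2$, making the conclusion vacuous. The main (mild) obstacle in this plan is the $\CC$-reducible case; once it is handled via the Galois-conjugate factorization and a Bezout bound, the rest is a direct appeal to Lemma~\ref{manyCurvesInSurface}.
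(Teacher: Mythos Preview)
Your proof is correct and follows precisely the route the paper sets up: complexify the curves using the $C\mapsto C^*$ construction described just before the corollary, transfer the $2$-rich point count via $|C\cap\pts_2(\mathcal{C})|\leq|C^*\cap\pts_2(\mathcal{C}^*)|$, and invoke Lemma~\ref{manyCurvesInSurface}. The paper states the corollary without proof, so your write-up is exactly the intended argument; your explicit treatment of the case where $P$ is $\RR$-irreducible but $\CC$-reducible (showing the real zero set then lies in a complex curve of degree $\leq D^2/4$, forcing $n\leq AD^2$) fills in a detail the paper leaves implicit.
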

% \begin{proof}
% Suppose that there exist $\geq A_0$ curves $C\in\mathcal{C}$ with $|C\cap \pts_2(\mathcal{C})|\geq A_0$

% $|\pts_2(\mathcal{C})|\geq A_0n$. Since each pair of curves can intersect $O_t(1)$ times, $|C\cap\pts_2(\mathcal{C})=O_{t}(n)$. Let $A$ be the constant from Lemma \ref{manyCurvesInSurface}. By pigeonholing, if $A_0$ is sufficiently large compared to $A$, $t$, and $D$, then there must be at least $AD^2$ curves $C\in\mathcal{C}$ with $|C\cap \pts_2(\mathcal{C}|\geq AD$. By Lemma \ref{manyCurvesInSurface}, the complex variety $\{(x_1,x_2,x_3)\in\CC^3\colon P(x_1,x_2,x_3)=0\}$ must have degree $\leq 100t^2$, and this in turn implies that $\deg P\leq 100t^2$.
% \end{proof}

We will be particularly interested in circles in $\RR^3$. The following result from \cite{SSZ2} says that for our purposes, the only interesting surfaces containing many circles are planes and spheres.
\begin{lem}[\!\cite{SSZ2}, Lemma 3.2]\label{circlesPlanesSpheres}
Let $P\in\RR[x_1,x_2,x_3]$ be an irreducible polynomial of degree  $\leq D$. Let $\mathcal{C}$ be a set of $n$ circles contained in $Z(P)$ and let $\pts\subset Z(P)$ be a set of $m$ points. Then either $Z(P)$ is a plane or sphere, or 
$
I(\pts,\circles) = O_D(m+n).
$
\end{lem}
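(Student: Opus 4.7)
The plan is to apply Corollary \ref{lowDegreeOrFewIncidences} with $t=2$, which is applicable because circles are real algebraic curves of degree $2$. The corollary then produces two cases depending on the degree of $P$: either $\deg P > 400$, in which case all but at most $AD^2$ circles $C\in\mathcal{C}$ satisfy $|C\cap\pts_2(\mathcal{C})|\leq AD$, or $\deg P \leq 400$.

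In the high-degree case, I would decompose the incidences according to where the point lies. Incidences $(p,C)$ with $p\notin\pts_2(\mathcal{C})$ correspond to points of $\pts$ lying on at most one circle of $\mathcal{C}$, and they contribute at most $m$ in total. For incidences with $p\in\pts_2(\mathcal{C})$, I would separately bound the contribution from the at most $AD^2$ ``exceptional'' circles (trivially $\leq AD^2\, m = O_D(m)$) and from the remaining ``typical'' circles (at most $AD\, n = O_D(n)$, using the 2-rich-point bound from the corollary). Summing gives $I(\pts,\circles)=O_D(m+n)$, and note that in this case $\deg P > 400$ automatically forces $Z(P)$ to be neither a plane nor a sphere, so the hypothesis is consistent.

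In the bounded-degree case $\deg P \leq 400$, the degree of $Z(P)$ is an absolute constant, and I would invoke the classical geometric fact that an irreducible real algebraic surface in $\RR^3$ containing a two-parameter family of circles (equivalently, a one-parameter family of contained circles through each regular point) must be a plane or a sphere. Under our hypotheses $Z(P)$ is neither, so through each regular point of $Z(P)$ pass at most $O_D(1)$ circles contained in $Z(P)$, producing $O_D(m)$ incidences at regular points. The singular locus $Z(P)_{\mathrm{sing}}$ has dimension at most one and is cut out by polynomials of degree depending only on $D$; by B\'ezout, each circle meets it in $O_D(1)$ points, contributing at most $O_D(n)$ additional incidences. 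Together these yield $I(\pts,\circles)=O_D(m+n)$.

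The main obstacle is the classical statement used in the bounded-degree case: that planes and spheres are the only irreducible real algebraic surfaces in $\RR^3$ admitting a two-parameter family of contained circles. This is a classical result (stretching back to Darboux and Blaschke, with modern treatments by Takeuchi, Nilov--Skopenkov, and Lubbers--Schicho), which can be established by analyzing the severe constraints that a one-parameter family of circles passing through each point $p$ of $Z(P)$ places on the second fundamental form of $Z(P)$ at $p$, ultimately forcing $Z(P)$ to have constant sectional curvature. Identifying the cleanest citable version of this fact in the present setting, and handling a small residual contribution from $Z(P)_{\mathrm{sing}}$, is where most of the technical effort lies.
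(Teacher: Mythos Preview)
The paper does not supply its own proof of this lemma; it is quoted verbatim from \cite{SSZ2} (Lemma~3.2 there) and used as a black box. So there is no argument in the present paper against which to compare your proposal.

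That said, your plan is sound. Two remarks on the details.

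In the high-degree case your use of Corollary~\ref{lowDegreeOrFewIncidences} with $t=2$ is correct, and the three-way split of incidences (non-$2$-rich points, typical circles, $\leq AD^2$ exceptional circles) gives $O_D(m+n)$ exactly as you say. You could slightly sharpen the organization by invoking Lemma~\ref{manyCurvesInSurface} directly rather than its corollary: that gives the same $O_D(m+n)$ bound whenever $Z(P)$ is \emph{not} doubly ruled by circles, irrespective of $\deg P$. The residual case then becomes precisely ``$Z(P)$ is doubly ruled by circles but is neither a plane nor a sphere,'' which is the family classified in \cite{Sk}, and for which the $O(1)$-circles-through-each-point statement is more directly available.

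In the bounded-degree case your outline is right but needs two small patches. First, the classical result you cite typically bounds the number of contained circles through a \emph{generic} point; the exceptional locus of points through which more than the generic number of circles pass is a proper algebraic subset of $Z(P)$ of dimension $\leq 1$ and complexity $O_D(1)$, and should be folded into the same argument you give for $Z(P)_{\mathrm{sing}}$. Second, your B\'ezout step assumes each circle meets the one-dimensional bad locus properly; you must also account for the at most $O_D(1)$ circles from $\circles$ that could be entirely \emph{contained} in that locus, which contribute at most $O_D(m)$ further incidences. With these fixes the argument closes.
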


\subsection{Existing incidence bounds for points, circles, and spheres}
When bounding the number number of incidences between points and unit spheres in $\RR^3$, we will have to deal with incidences between points and circles, which arise as the intersection of pairs of unit spheres. When understanding point-circle incidences in $\RR^3$, we will be forced to understand degenerate configurations in which many circles lie on a common sphere. Thus it will be necessary for us to deal with incidences between points and (arbitrary) spheres. 

The following two results are the current best bounds in this direction. Neither of these results are sharp, and improvements to either result would yield an improved bound on the unit distance problem. However, even if the (conjectured) best-possible point-circle and point-sphere bounds were known, this would not be enough to obtain the conjectured sharp bound on the unit distance problem using the methods from this paper. 

\begin{thm}[Aronov, Koltun, and Sharir \cite{AKS}]\label{pointCircleBound}
Let $\pts$ be a set of $m$ points and let $\circles$ be a set of $n$ circles in $\RR^3$. Then the number of point-circle incidences is
$$
O_\eps\big(m^{6/11}n^{9/11+\eps}+m^{2/3}n^{2/3}+m+n\big).
$$
\end{thm}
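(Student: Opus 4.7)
The target exponents $6/11$ and $9/11$ are exactly those of the Aronov--Sharir bound for points and circles in the \emph{plane} (see \cite{ArS}). Since any planar configuration embeds into $\RR^3$ without affecting the number of incidences, the three-dimensional bound cannot be better than the planar one, and the plan is to show that a three-dimensional configuration is never worse than its planar restrictions. The strategy is a polynomial partitioning argument that localizes the problem, combined with a generic projection that reduces each local piece to the planar Aronov--Sharir theorem.

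Concretely, I would apply Theorem \ref{projectedPartitioning} with $e=1$ and $f=d=3$, using a partitioning polynomial $P$ of degree $D$ (to be chosen at the end) so that $\RR^3\setminus Z(P)$ decomposes into $O(D^3)$ cells, each of which meets at most $O(n/D^2)$ circles. After a secondary subdivision of $\pts$, each cell also contains $O(m/D^3)$ points. Within each cell $\Omega$, pick a generic orthogonal projection $\pi\colon\RR^3\to\RR^2$: genericity guarantees injectivity on $\pts\cap\Omega$, distinct images for distinct circles, and that the images form a family of pseudo-circles (each pair meets in at most two points). The planar Aronov--Sharir theorem then bounds the incidences in each cell, and summing over cells and optimizing $D$ yields the claimed cell contribution.

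Incidences on $Z(P)$ itself are controlled using Lemma \ref{circlesPlanesSpheres}: the only irreducible components of $Z(P)$ capable of containing many circles are planes and spheres, and there are $O(D)$ such components. On each plane, the planar Aronov--Sharir bound applies directly. On each sphere, first apply stereographic projection (which maps circles to circles and a bounded number of exceptional circles to lines) and then invoke the planar bound. Points and circles lying on components of $Z(P)$ that are neither planes nor spheres contribute only $O_D(m+n)$ by the second alternative of Lemma \ref{circlesPlanesSpheres}.

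The main obstacle is the bookkeeping around degenerate configurations in which many circles cluster on a single plane or sphere: one must apportion $\pts$ and $\circles$ among these components, guarantee that the summation over components preserves the right aggregate exponents, and balance the various error terms against the Kővári--Sós--Turán baseline $O(m^{2/3}n^{2/3}+m+n)$ when selecting $D$. A secondary subtlety is keeping the recursion finite: if one tried to invoke the three-dimensional bound itself inside each cell, the proof would require induction on $n$, but treating the planar Aronov--Sharir theorem as a black box turns the argument into a one-shot reduction rather than an inductive one.
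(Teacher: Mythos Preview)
The paper does not prove Theorem \ref{pointCircleBound}; it is quoted from \cite{AKS} and used as a black box, so there is no ``paper's own proof'' to compare against.

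Your proposal contains a genuine gap. The assertion that a generic orthogonal projection $\pi\colon\RR^3\to\RR^2$ carries a family of circles to a family of pseudo-circles is false: the image of a circle under an orthogonal projection is an ellipse, and two ellipses in the plane meet in up to four real points, not two. This is not a measure-zero accident. For example, a circle of radius $2$ in the $xz$-plane and a circle of radius $2$ in the $yz$-plane, both centred at the origin, project under directions near the $z$-axis to a pair of thin ellipses with orthogonal major axes, and these intersect in four points; the phenomenon persists under perturbation of both the circles and the projection direction. Consequently the planar Aronov--Sharir theorem, whose proof (via cutting lenses into pseudo-segments) genuinely requires that each pair of curves meet at most twice, does not apply to the projected family. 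The best available planar bound for general conics has strictly worse exponents than $(6/11,9/11)$, so your cell-by-cell summation would not recover the target.

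Nor can one fall back on a Pach--Sharir ``three degrees of freedom'' argument for the projected family: through three fixed points of $\RR^2$ there is a three-parameter family of circles in $\RR^3$ projecting to ellipses through those points (one for each non-vertical plane meeting the three fibres), so the projected ellipses do not have bounded degrees of freedom. The actual argument in \cite{AKS} reduces to the planar circle bound by a more delicate route that does not pass through a linear projection of the ambient space; a correct proof must either reproduce that reduction or redo the lens-cutting analysis directly for circles in $\RR^3$, as the present paper does in Section~3.
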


\begin{defn}\label{defnNonDegenerate}
Let $S$ be a sphere and let $0<\alpha< 1$. We say that $S$ is $\alpha$-non-degenerate with respect to a set of points $\pts$ if for every circle $C\subset S$ we have 
$$
|C\cap S\cap\pts|\leq\alpha |S\cap\pts|.
$$
\end{defn}
\begin{thm}[Apfelbaum and Sharir \cite{AS}]\label{richSphereThm}
Let $\pts$ be a set of $m$ points in $\RR^3$. Then for each $0<\alpha<1$, the number of $k$--rich, $\alpha$-non-degenerate spheres is 
$$
O_\alpha(m^4k^{-5}+mk^{-1}).
$$
\end{thm}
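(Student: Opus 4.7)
The plan is to lift the configuration to $\RR^4$ via the standard paraboloid map, reducing the problem to counting $k$-rich ``non-degenerate'' hyperplanes in $\RR^4$, and then to invoke the Elekes--T\'oth theorem on rich non-degenerate hyperplanes.

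Concretely, I would map each point $p = (p_1, p_2, p_3) \in \pts$ to $\tilde p = (p_1, p_2, p_3,\, p_1^2 + p_2^2 + p_3^2) \in \RR^4$. Expanding the equation $(x_1-a_1)^2+(x_2-a_2)^2+(x_3-a_3)^2 = r^2$ of a sphere and substituting $x_4 = x_1^2 + x_2^2 + x_3^2$ rewrites it as an affine equation in $(x_1, x_2, x_3, x_4)$, so each sphere in $\RR^3$ corresponds to a unique hyperplane in $\RR^4$ and incidences are preserved. A circle $C \subset S$ can be written as $S \cap S'$ for some other sphere or plane $S'$, so after lifting $C$ becomes the intersection of two hyperplanes, that is, a 2-flat in $\RR^4$. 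The $\alpha$-non-degeneracy hypothesis therefore translates to: no 2-flat $\Pi \subset \tilde S$ carries more than $\alpha k$ lifted points (and consequently no line or 0-flat does either, since any such flat is contained in some 2-flat).

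I would then apply the theorem of Elekes and T\'oth on $k$-rich hyperplanes in $\RR^d$ such that no $(d-2)$-flat contained in the hyperplane carries more than an $\alpha$-fraction of its points. Their bound is $O_\alpha(m^d/k^{d+1})$; an additive lower-order term $O(m/k)$ comes from a separate K\H{o}v\'ari--S\'os--Tur\'an-style argument handling the regime of very large $k$. Specializing to $d = 4$ yields exactly $O_\alpha(m^4/k^5 + m/k)$, as claimed. The core of the Elekes--T\'oth argument is a careful counting of ``generic'' $(d+1)$-tuples of points on rich hyperplanes (using non-degeneracy to ensure that most tuples are affinely general), combined with an upper bound from incidence constraints.

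The main technical step is verifying that the non-degeneracy hypothesis after lifting has exactly the form required by Elekes--T\'oth, namely non-degeneracy of all $(d-2)$-flats within each hyperplane. Since in $\RR^4$ we have $(d-2) = 2$ and our circles lift precisely to 2-flats, the conditions align perfectly, and the Elekes--T\'oth bound can then be cited as a black box. No further geometric input specific to $\RR^3$ is required beyond the lifting identity.
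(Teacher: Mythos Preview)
The paper does not prove this theorem; it is quoted from Apfelbaum--Sharir \cite{AS} and used as a black box. Your proposal is correct and is essentially the argument of \cite{AS}: lift via the paraboloid map so that spheres become hyperplanes in $\RR^4$ and circles become $2$-flats, then invoke the Elekes--T\'oth bound on $k$-rich non-degenerate hyperplanes with $d=4$.

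One small point worth tightening: you should check that \emph{every} $2$-flat contained in the lifted hyperplane $\tilde S$ (not just those arising as lifts of circles) carries at most $\alpha k$ lifted points. This holds because any $2$-flat $\Pi\subset\tilde S$ is $\tilde S\cap H$ for some hyperplane $H$, and $H$ corresponds under the lifting either to a sphere or to a plane in $\RR^3$; in either case $\Pi$ meets the paraboloid in the lift of a circle (or a single point, or the empty set) on $S$, so the $\alpha$-non-degeneracy hypothesis in Definition~\ref{defnNonDegenerate} applies. With that verification in place your reduction to Elekes--T\'oth is clean.
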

\section{Cutting circles in $\RR^3$ into pseudosegments}
In \cite{SZ}, Sharir and the author showed that $n$ algebraic plane curves can be cut into $O(n^{3/2})$ pseudo-segments. In this section we will extend this result from plane curves to circles in $\RR^3$. It is possible that a similar result holds for general algebraic curves in $\RR^3$, but the proof from \cite{SZ} uses topological arguments that do not generalize from plane curves to (general) space curves.

\subsection{Cuttings}
%First, we will recall several definitions from \cite{SZ}.
\begin{defn}
A set $\gamma\subset\RR^d$ is called an open (resp. closed) Jordan arc if it is homeomorphic to the open interval $(0,1)$ (resp. closed interval $[0,1])$. Unless stated otherwise, all Jordan arcs that we refer to will be open.
\end{defn}
\begin{defn}
Let $\Gamma$ be a set of Jordan arcs in $\RR^d$. We say $\Gamma$ is a set of pseudo-segments if for every pair of points $x,y\in\RR^d$, there is at most one curve from $\Gamma$ that contains both $x$ and $y$.
\end{defn}

\begin{defn}\label{defnOfCutting}
Let $\mathcal{C}$ be a set of algebraic space curves in $\RR^d$, no two of which share a common component. Let $\Gamma$ be a set of Jordan arcs, each pair of which have finite intersection. We say that $\Gamma$ is a cutting of $\mathcal{C}$ if each curve in $\mathcal{C}$ can be written as a union of finitely many arcs from $\Gamma$, plus finitely many points. If $|\Gamma|=N$, we say that $\Gamma$ is a cutting of $\mathcal{C}$ into $N$ pieces.
\end{defn}
Rather than keeping track of the Jordan arcs obtained by cutting algebraic curves, it will sometimes be easier to keep track of the points that are removed from each curve to obtain the cutting. Thus if $\mathcal{C}$ is a set of algebraic curves in $\RR^d$, it will sometimes be helpful to think of a cutting as a set $\mathcal{D}\subset \mathcal{C}\times\RR^d$ with the following properties:
\begin{itemize}
\item For each $(C,p)\in\mathcal{D}$, $p\in C$.
\item For each $C\in\mathcal{C}$, let $\pts_C=(\{C\}\times\RR^d)\cap\mathcal{D}$. Then each connected component of $C\backslash \pts_C$ is a Jordan arc.
\end{itemize}
 By \cite[Lemma 2.2]{SZ}, the number of connected components of $C\backslash\pts_C$ is $O_t(|\pts_C|+1)$ (\cite[Lemma 2.2]{SZ} refers to plane curves, but the same proof works for space curves). Thus if we define 
$$
 \Gamma=\bigcup_{C\in\mathcal{C}}\{\gamma\colon \gamma\ \textrm{is a connected component of}\ C\backslash\pts_C\},
$$
then $\Gamma$ is a cutting of $\mathcal{C}$ in the sense of Definition \ref{defnOfCutting} into $O_t(|\mathcal{D}|+|\mathcal{C}|)$ pieces.

Conversely, if $\Gamma$ is a cutting of $\mathcal{C}$ into $N$ Jordan arcs, then we can represent the cutting $\Gamma$ as a set $\mathcal{D}\subset\mathcal{C}\times\RR^d$ with $|\mathcal{D}|\leq 2|\Gamma|$; simply define 
$$
\mathcal{D}=\bigcup_{\gamma\in\Gamma}\{(p,C)\colon \gamma\subset C,\ p\ \textrm{is an endpoint of}\ \gamma\}.
$$
\subsection{Vertical hypersurfaces}\label{vertHypersurfaceSec}
\begin{defn}
A \emph{vertical hypersurface} is an algebraic variety of the form $Z = Z(P)\subset\RR^d$, where $P\in\RR[x_1,\ldots,x_d]$ is independent of the $x_d$ variable. If $Z$ is a vertical hypersurface then $(x_1,\ldots,x_d)\in Z$ if and only if $\{(x_1,x_2,\ldots,x_{d-1},y)\colon y\in\RR\}\subset Z$.
\end{defn}

The following is an immediate consequence of Corollary \ref{lowDegreeOrFewIncidences}.
\begin{lem}\label{verticalHyperDoublyRuled}
Let $\pi\colon \RR^4\to\RR^3$ be the projection to the first three coordinates. Let $Z=Z(P)$ be an irreducible vertical hypersurface in $\RR^4$ of degree $\leq D$. Let $\mathcal{C}$ be a set of space curves in $\RR^4$ of degree $\leq t$, and suppose that no two curves have projections to the $(x_1,x_2,x_3)$ hyperplane that share a common component. Then either $\pi(Z)$ is doubly-ruled by curves of degree $\leq t$, or the curves in $\mathcal{C}$ can be cut into $O_{D,t}(n)$ Jordan arcs whose projections to the $(x_1,x_2,x_3)$ hyperplane are disjoint.
\end{lem}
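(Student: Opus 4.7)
The plan is to push the problem down to $\RR^3$ by projecting, apply Lemma~\ref{manyCurvesInSurface} on the projected surface $\pi(Z)$, and then lift the resulting cutting back up to $\RR^4$. Since $P$ is independent of $x_4$, we may regard $P\in\RR[x_1,x_2,x_3]$ as an irreducible polynomial of degree $\leq D$, so $\pi(Z)=Z(P)\subset\RR^3$ is an irreducible real algebraic surface of degree $\leq D$. In the intended setup the curves in $\mathcal{C}$ lie in $Z$, so each $\pi(C)$ is a real algebraic curve in $\pi(Z)$ of degree $\leq t$ (we discard the $O(1)$ curves that are vertical fibres of $\pi$, whose projections are points), and by hypothesis no two of the $\pi(C)$ share an irreducible component.

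Next I would apply the contrapositive of Lemma~\ref{manyCurvesInSurface} (equivalently, Corollary~\ref{lowDegreeOrFewIncidences}) to $\pi(Z)$ and $\pi(\mathcal{C})$: either $\pi(Z)$ is doubly-ruled by curves of degree $\leq t$ and we are done, or there is a constant $A=A(t)$ such that for all but at most $AD^2$ of the projected curves, the number of points of $\pts_2(\pi(\mathcal{C}))$ lying on the curve is at most $AD$. Call the well-behaved curves \emph{good} and the remaining $\leq AD^2 = O_{D,t}(1)$ curves \emph{bad}. Cut each good $\pi(C)$ at its 2-rich points---by \cite[Lemma~2.2]{SZ} this yields $O_{D,t}(1)$ Jordan arcs per curve, hence $O_{D,t}(n)$ arcs in total. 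Cut each bad $\pi(C)$ at every intersection with any other projected curve; by Bezout and the no-common-component hypothesis each pair of curves meets in $\leq t^2$ points, so each bad curve produces $O_t(n)$ arcs and the bad curves contribute $O_{D,t}(n)$ arcs altogether. Because every meeting point of two distinct projected curves is a 2-rich point that has been removed from both, the planar arcs coming from distinct $\pi(C),\pi(C')$ are pairwise disjoint in $\RR^3$.

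Finally I would lift each planar Jordan arc $\gamma\subset\pi(C)$ to $\pi^{-1}(\gamma)\cap C\subset\RR^4$. The main obstacle is that if $\pi|_C$ is generically $k$-to-$1$ with $k\geq 2$, a single $\gamma$ has $k$ disjoint Jordan-arc lifts in $C$ whose projections all equal $\gamma$, in conflict with the disjoint-projection requirement. In the intended application (curves arising as natural lifts of plane objects) one expects $\pi|_C$ to be generically injective, so each planar arc lifts uniquely to a Jordan arc in $\RR^4$ and the $O_{D,t}(n)$ bound is preserved; the no-common-component hypothesis together with the earlier removal of all 2-rich points then immediately gives disjoint projections for lifted arcs coming from different curves. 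The careful bookkeeping needed to handle the multi-sheeted case---cutting at the finite ramification locus of $\pi|_C$ and verifying that the total number of extra cuts is controlled by $D$ and $t$ rather than $n$---is where I expect most of the technical work to live.
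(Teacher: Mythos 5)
Your core argument matches the paper exactly: the paper offers no proof beyond the remark that the lemma is ``an immediate consequence of Corollary~\ref{lowDegreeOrFewIncidences},'' and your expansion---treat $P\in\RR[x_1,x_2,x_3]$, apply Corollary~\ref{lowDegreeOrFewIncidences} to $\pi(Z)$ and the projected curves, cut the $\leq AD^2$ ``bad'' curves at all $O_t(n)$ intersection points and the remaining curves at their $\leq AD$ two-rich projected points, then pull the cuts back to $\RR^4$---is precisely what that remark is gesturing at, including the $O_t(\cdot+1)$ bound on components from \cite[Lemma~2.2]{SZ}.

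One clarification on your closing paragraph. You present the generically $k$-to-$1$ case ($k\geq 2$) as a technical bookkeeping issue to be handled by cutting at the ramification locus of $\pi|_C$. That cannot work: away from ramification the curve still has $k\geq 2$ disjoint local branches sitting over every base arc of $\pi(C)$, so any cutting of $C$ into finitely many Jordan arcs must produce two arcs with overlapping projections. Thus the conclusion of the lemma is simply false for such $C$; birationality of $\pi|_C$ is not a detail to engineer around but a tacit hypothesis of the lemma. It is satisfied in the only place the lemma is invoked, namely for the lifted curves $\beta(C)$ of Section~3.3, whose construction forces $\pi|_{\beta(C)}$ to be generically injective (the fourth coordinate is determined by the first three wherever $\partial_{x_1}f^{\underline C}\neq 0$). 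So your instinct that the hypothesis should be flagged was right, but the resolution is to record it as an assumption (met in the application), not to add cuts.
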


Let $P\in\RR[x_1,\ldots,x_4]$. For each point $x=(x_1,x_2,x_3,x_4)\in\RR^4$, define $h_P(x)$ to be the number of points of intersection of $Z(P)$ with the closed ray $\{(x_1,x_2,x_3,y)\in\RR^4\colon y\leq x_4\}$. Note that for each $x\in\RR^4$, $h_P(x)$ is either a non-negative integer or is $\infty$. 

Let $W_P=Z(P)\cap Z(\partial_{x_4} P)$, let $\pi\colon\RR^4\to\RR^3$ be the projection to the first three coordinates, and let 
\begin{equation}\label{defnVP}
V_P = \pi^{-1}\big(\ \overline{\pi(W_p)}\ \big), %\{(x_1,x_2,x_3,x_4)\colon (x_1,x_2,x_3,y)\in W\ \textrm{for some}\ y\in\RR\}.
\end{equation}
where $\overline{\phantom{1}\cdot\phantom{1}}$ denotes Zariski closure. Note that $V_P$ is a vertical hypersurface defined by a polynomial of degree at most $D^2$. By \cite[Section 6]{SZ}, the function $h_P(x)$ is constant on each (Euclidean) connected component of $\RR^4\backslash (Z(P)\cup V_P)$ (the result in \cite{SZ} is the analogous statement in $\RR^3$ rather than $\RR^4$, but the proof is identical; the same proof works in any dimension $d\geq 3$).

\subsection{Lifting space curves to $\RR^4$}
In this section we will describe a transformation that sends space curves in $\RR^3$ to space curves in $\RR^4$. The extra dimension will encode information about the slope of the curve.

Let $C$ be an irreducible curve in $\RR^3$ and let $\underline{C}$ be the projection of $C$ to the $(x_1,x_2)$ plane. Applying a generic rotation if necessary, we can assume that the image of this projection is an irreducible algebraic curve, and that the fiber above a generic point in the image of the projection has cardinality one. We can also assume that the degree of $\underline{C}$ is the same as that of $C$. 

Let $f^{\underline{C}}\in \RR[x,y]$ be a square-free polynomial with $Z(f^{\underline{C}})=\underline{C}$. Following the strategy in \cite{ESZ} and \cite{SZ} consider the set
$$
\{(x_1,\ldots,x_4)\in\RR^4\colon (x_1,x_2,x_3)\in C,\ x_4\partial_{x_1}  f^{\underline{C}} = \partial_{x_2} f^{\underline{C}} \}.
$$ 
As discussed in Section 3.3 of \cite{ESZ}, this set is the union of a set of vertical lines (i.e. lines whose projection to the $(x_1,x_2,x_3)$ hyperplane consists of a point), plus an irreducible space curve in $\RR^4$ that is not a vertical line. Call this irreducible curve $\beta(C)$. Intuitively, if $(x_1,x_2,x_3)\in C$, then $(x_1,x_2,x_3,x_4)\in \beta(C)$ if $x_4$ is the slope of $\underline{C}$ at the point $(x_1,x_2)$.

If $\mathcal{C}$ is a set of irreducible algebraic curves in $\RR^3$, define 
$$
\beta(\mathcal{C})=\{\beta(C)\colon C\in\mathcal{C}\}.
$$
Applying a generic rotation if necessary, we will assume that each set in $\beta(\mathcal{C})$ is an irreducible algebraic curve. 

\subsection{Depth cycles}
\begin{defn}
Let $\gamma$ and $\gamma^\prime$ be closed Jordan arcs in $\RR^d$. We say that $\gamma_1$ and $\gamma_2$ form a \emph{depth cycle} if there exist points $(x_1,\ldots,x_d), (y_1,\ldots,y_d)\in\gamma$ and $(x_1,\ldots,x_{d-1},x_d^\prime), (y_1,\ldots,y_{d-1},y_d^\prime)\in\gamma^\prime$ so that either $x_d\leq x_d^\prime$ and $y_d\geq y_d^\prime$, or $x_d\geq x_d^\prime$ and $y_d\leq y_d^\prime$. We say that $\gamma_1$ and $\gamma_2$ form a \emph{proper depth cycle} if one of these inequalities is strict. 
\end{defn}

\begin{defn}
We say that the closed Jordan arcs $\gamma$ and $\gamma^\prime$ form a \emph{minimal depth cycle} if $(x_1,\ldots,x_d)$ and $(y_1,\ldots,y_d)$ are the endpoints of $\gamma$; $(x_1,\ldots,x_{d-1},x_d^\prime)$ and $(y_1,\ldots,y_{d-1},y_d^\prime)$ are the endpoints of $\gamma^\prime$; and the projections of $\gamma$ and $\gamma^\prime$ to the $(x_1,\ldots,x_{d-1})$ hyperplane intersect only at the points $(x_1,\ldots,x_{d-1})$ and $(y_1,\ldots,y_{d-1})$.
\end{defn}
If $\gamma$ and $\gamma^\prime$ form a depth cycle, then there always exist closed Jordan arcs $\tilde\gamma\subset\gamma$ and $\tilde\gamma^\prime\subset\gamma^\prime$ that form a minimal depth cycle. The choice of $\tilde\gamma$ and $\tilde\gamma^\prime$ might not be unique, however.

%Observe that if the closed Jordan arcs $\gamma$ and $\gamma^\prime$ form a depth cycle with endpoints $(x_1,\ldots,x_d),\ (y_1,\ldots,y_d)\in\gamma$ and $(x_1,\ldots,x_{d-1},x_d^\prime),\ (y_1,\ldots,y_{d-1},y_d^\prime)\in\gamma^\prime$, then $(x_1,\ldots,x_{d-1})$ and $(y_1,\ldots,y_{d-1})$ lie in the intersection of the $(x_1,\ldots,x_{d-1})$ projection of $\gamma$ and $\gamma^\prime$. 

%The converse is more complicated. However, in the special case of circles we have the following result.

In \cite{SZ}, Sharir and the author proved the following result:
\begin{thm}[\cite{SZ}, Theorem 1.2]\label{R3DepthCycleCutting}
Let $\mathcal{C}$ be a set of $n$ algebraic curves in $\RR^3$, each of degree at most $t$. Suppose that no two curves have projections to the $(x_1,x_2)$ plane that share a common component. Then the curves in $\mathcal{C}$ can be cut into $O_{t,\eps}(n^{3/2+\eps})$ Jordan arcs so that the arcs contain no proper depth cycles.
\end{thm}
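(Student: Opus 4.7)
The plan is to reduce Theorem \ref{R3DepthCycleCutting} to the two-dimensional pseudo-segment cutting theorem from \cite{SZ} via the projection to the $(x_1,x_2)$-plane. The underlying observation is that a proper depth cycle between two 3D Jordan arcs forces their 2D projections to share two (possibly coincident) common points, namely $(x_1,\ldots,x_{d-1})$ and $(y_1,\ldots,y_{d-1})$ from the depth-cycle definition. Hence if we cut the 2D projections into pseudo-segments---arcs any two of which meet in at most one point---then no proper depth cycle can survive, up to a controllable set of degenerate cases.

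After a generic rotation, each $C \in \mathcal{C}$ projects to an irreducible algebraic curve $\underline{C} \subset \RR^2$ of the same degree as $C$, with generic fibers of size one; the hypothesis of the theorem ensures that no two projections share a common component. Apply the pseudo-segment cutting theorem for algebraic plane curves from \cite{SZ} to $\underline{\mathcal{C}} = \{\underline{C} : C \in \mathcal{C}\}$, obtaining a cut set $\mathcal{D}_{2} \subset \underline{\mathcal{C}} \times \RR^2$ of size $O_{t,\eps}(n^{3/2+\eps})$ whose removal splits the projections into pseudo-segments.

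Lift the cuts back to $\RR^3$: for each $(\underline{C}, p) \in \mathcal{D}_{2}$, cut $C$ at every point of $\pi^{-1}(p) \cap C$, where $\pi$ is the projection to the $(x_1,x_2)$-plane. Since the map $C \to \underline{C}$ has degree $O_t(1)$, each 2D cut yields $O_t(1)$ 3D cuts; additionally include the finitely many bad points on each $C$ where the fiber of $C \to \underline{C}$ exceeds size one, contributing a further $O_t(n)$ cuts. By the space-curve version of \cite[Lemma 2.2]{SZ} this gives a cutting of $\mathcal{C}$ into $O_{t,\eps}(n^{3/2+\eps})$ Jordan arcs. To verify the depth-cycle property, suppose two resulting arcs $\gamma_1, \gamma_2$ form a proper depth cycle; then the 2D projections of $\gamma_1$ and $\gamma_2$ share two points, which must either be distinct (violating the pseudo-segment property of the 2D arcs containing them) or coincide (forcing both arcs to contain a bad point in their interior, excluded by the additional cuts). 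Either case is a contradiction.

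The main obstacle lies not in this reduction but in the 2D pseudo-segment cutting theorem it relies on, whose proof in \cite{SZ} requires substantial machinery (polynomial partitioning on a three-dimensional slope-lift, counting minimal lenses, and a Sz\'emeredi--Trotter-type bound after cutting). Relative to that input, the present theorem is a clean reduction, though care must be taken with the degenerate bad-point case where two 3D arcs admit a depth cycle through a single 2D projection point with multiple preimages on each curve; the generic rotation guarantees only $O_t(1)$ such points per curve, so they contribute only $O_t(n)$ to the final arc count and do not dominate the $n^{3/2+\eps}$ bound.
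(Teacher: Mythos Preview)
The paper does not prove this theorem; it is quoted as Theorem 1.2 of \cite{SZ} and used as a black box. So there is no ``paper's proof'' to compare against, only the argument in \cite{SZ} itself.

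Your reduction---project to the $(x_1,x_2)$-plane, cut the planar projections into pseudo-segments using the 2D theorem from \cite{SZ}, then lift the cuts (together with the finitely many bad fiber points) back to $\RR^3$---is logically sound. The verification that no proper depth cycle survives is correct: after removing bad points each 3D arc projects injectively, so a proper depth cycle over a single projection point is impossible, and a depth cycle over two distinct projection points contradicts the pseudo-segment property. (Minor correction: in the coincident-point case only \emph{one} of the two arcs needs to contain a bad point, not both.)

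The problem is circularity. In \cite{SZ} the logical flow runs the other way: the 3D depth-cycle cutting theorem (their Theorem 1.2, the statement you are trying to prove) is established directly via polynomial partitioning in $\RR^3$, and the 2D pseudo-segment cutting theorem is then deduced from it by the slope-lift $\underline{C}\mapsto\beta(\underline{C})$, under which planar lenses become depth cycles in $\RR^3$. You acknowledge this yourself when you note that the 2D theorem's proof in \cite{SZ} ``requires \ldots polynomial partitioning on a three-dimensional slope-lift.'' So invoking the 2D theorem to prove the 3D theorem just sends you back to the 3D theorem. What you have really shown is the (easy) converse implication, establishing that the 2D and 3D statements are equivalent---a useful structural observation, but not an independent proof.
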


Since vertical depth cycles are preserved under projections of the form $(x_1,x_2,x_3,x_4)\mapsto(\pi(x_1,x_2,x_3),x_4)$ where $\pi\colon\RR^3\to\RR^2$ is a projection, Theorem \ref{R3DepthCycleCutting} also allows us to cut algebraic curves in $\RR^4$ into Jordan arcs so that all depth cycles are eliminated.

\begin{cor}\label{corOfSZ}
Let $\mathcal{C}$ be a set of $n$ algebraic curves in $\RR^4$, each of degree at most $t$. Suppose that no two curves have projections to the $(x_1,x_2,x_3)$ hyperplane that share a common component. Then the curves in $\mathcal{C}$ can be cut into $O_{t,\eps}(n^{3/2+\eps})$ Jordan arcs so that the arcs contain no proper depth cycles.
\end{cor}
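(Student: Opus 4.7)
The plan is to reduce Corollary \ref{corOfSZ} directly to Theorem \ref{R3DepthCycleCutting} by means of a generic projection $\RR^4\to\RR^3$ that keeps the $x_4$ coordinate intact. Let $\pi\colon \RR^3\to\RR^2$ be a generic orthogonal projection acting on the first three coordinates and define $\tilde\pi\colon\RR^4\to\RR^3$ by $\tilde\pi(x_1,x_2,x_3,x_4)=(\pi(x_1,x_2,x_3),\,x_4)$. For a generic choice of $\pi$, the discussion of generic projections in Section \ref{realAlgVarietySec} (applied to the projections of our curves to the $(x_1,x_2,x_3)$ hyperplane) guarantees that for each $C\in\mathcal{C}$ the image $\tilde\pi(C)$ is an algebraic curve in $\RR^3$ whose degree is still bounded by a function of $t$, that $\tilde\pi|_C$ is injective away from a finite set, and that for any two distinct $C_1,C_2\in\mathcal{C}$ the planar projections $\pi$ applied to the $(x_1,x_2,x_3)$--projections of $C_1$ and $C_2$ do not share a common component. (This last property is exactly the non-degeneracy assumption of Theorem \ref{R3DepthCycleCutting}, transferred through $\pi$.)

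Next I would apply Theorem \ref{R3DepthCycleCutting} to the set $\tilde\pi(\mathcal{C})=\{\tilde\pi(C)\colon C\in\mathcal{C}\}$ to obtain a cutting $\Gamma^{(3)}$ of these $\RR^3$--curves into $O_{t,\eps}(n^{3/2+\eps})$ Jordan arcs, containing no proper depth cycles (with respect to the last coordinate $x_4$). I would then pull this cutting back to $\RR^4$: for each $C\in\mathcal{C}$, take the preimages in $C$ of the endpoints of arcs of $\Gamma^{(3)}$ lying on $\tilde\pi(C)$, and also add the finitely many points of $C$ where $\tilde\pi|_C$ fails to be locally injective. Cutting $C$ at this finite set yields a family of Jordan arcs, since $\tilde\pi|_C$ is a homeomorphism on each of the resulting pieces. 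Summing across curves contributes $O_{t,\eps}(n^{3/2+\eps})+O_t(n)=O_{t,\eps}(n^{3/2+\eps})$ arcs in total.

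Finally I would check that the pulled-back cutting $\Gamma^{(4)}$ is free of proper depth cycles. Suppose toward contradiction that $\gamma_1,\gamma_2\in\Gamma^{(4)}$ form a proper depth cycle, so that there exist points $(x_1,x_2,x_3,x_4),(y_1,y_2,y_3,y_4)\in\overline{\gamma_1}$ and $(x_1,x_2,x_3,x_4'),(y_1,y_2,y_3,y_4')\in\overline{\gamma_2}$ with the required (strict) inequalities between $x_4,x_4'$ and between $y_4,y_4'$. Applying $\tilde\pi$, the first three coordinates of the two pairs coincide exactly because $\tilde\pi$ is the identity in the $x_4$ direction and applies the same $\pi$ to the first three coordinates on both sides. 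Thus $\tilde\pi(\gamma_1)$ and $\tilde\pi(\gamma_2)$ exhibit a proper depth cycle in $\RR^3$, contradicting the fact that $\Gamma^{(3)}$ was produced by Theorem \ref{R3DepthCycleCutting}.

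The main obstacle is the first step: verifying that a generic $\pi$ simultaneously achieves (i) preservation of the degree of each curve, (ii) generic injectivity on each curve, and (iii) non-sharing of components between the planar projections of distinct curves. Each of these conditions fails only on a proper Zariski-closed set of projections, and there are only $O(n^2)$ pairs of curves to consider, so a generic rotation avoids the bad set; once this is in place, the depth-cycle argument above is essentially automatic because $\tilde\pi$ preserves the $x_4$ coordinate.
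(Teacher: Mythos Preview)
Your proposal is correct and follows exactly the approach the paper sketches in the sentence preceding the corollary: project via $(x_1,x_2,x_3,x_4)\mapsto(\pi(x_1,x_2,x_3),x_4)$ for a generic $\pi\colon\RR^3\to\RR^2$, apply Theorem~\ref{R3DepthCycleCutting}, and use that such projections preserve vertical depth cycles because the $x_4$ coordinate is left untouched. You have simply spelled out the details (genericity of $\pi$, pulling back the cutting, the depth-cycle preservation argument) that the paper leaves implicit.
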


If the algebraic curves satisfy certain non-degeneracy conditions, however, then a stronger bound is possible.

\begin{thm}\label{cuttingSpaceCurvesLem}
For each integer $t$ and each $\eps>0$, there is a constant $A$ so that the following holds. Let $\mathcal{C}$ be a set of $n$ algebraic curves in $\RR^4$, each of degree at most $t$. Suppose that no two curves have projections to the $(x_1,x_2,x_3)$ hyperplane that share a common component. Then there are sets $\mathcal{C}=\mathcal{C}_1\sqcup\mathcal{C}_2$, a set $\mathcal{Z}$ of vertical hypersurfaces in $\RR^4$ and a cutting $\mathcal{D}\subset\mathcal{C}_2\times\RR^d$ with the following properties. 
\begin{itemize}
\item Each hypersurface $Z\in\mathcal{Z}$ has degree at most $100t^2$.
\item $|\mathcal{Z}|\leq 2n^{1/3-\eps}$.
\item Each curve in $\mathcal{C}_1$ is contained in (at least one) vertical hypersurface from $\mathcal{Z}$.
\item Each $Z\in\mathcal{Z}$ contains at least $n^{2/3+\eps}$ curves from $\mathcal{C}_1$.
\item $|\mathcal{D}|\leq An^{4/3+3\eps}$.
\item $\mathcal{D}$ is a cutting of $\mathcal{C}_2$ into Jordan arcs; each arc is disjoint from each hypersurface in $\mathcal{Z}$. 
\item The arcs from the cutting $\mathcal{D}$ contain no proper depth cycles.
\end{itemize}
\end{thm}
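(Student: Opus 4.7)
The plan proceeds in three stages. Stage 1 extracts $\mathcal{Z}$ and $\mathcal{C}_1$ iteratively: while there exists an irreducible vertical hypersurface $Z \subset \RR^4$ of degree $\leq 100t^2$ that contains at least $n^{2/3+\eps}$ not-yet-assigned curves from $\mathcal{C}$, add $Z$ to $\mathcal{Z}$ and all such curves to $\mathcal{C}_1$. Since each extraction removes $\geq n^{2/3+\eps}$ curves, the process terminates with $|\mathcal{Z}| \leq n^{1/3-\eps}$; by maximality, no irreducible vertical hypersurface of degree $\leq 100t^2$ contains more than $n^{2/3+\eps}$ curves of $\mathcal{C}_2 := \mathcal{C} \setminus \mathcal{C}_1$. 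The required property that each $Z \in \mathcal{Z}$ contains $\geq n^{2/3+\eps}$ curves of $\mathcal{C}_1$ holds by construction.

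Stage 2 coarsely cuts $\mathcal{C}_2$ along $\mathcal{Z}$: cut each $C \in \mathcal{C}_2$ at every point of $C \cap \bigcup_{Z \in \mathcal{Z}} Z$, which by B\'ezout is $O_t(|\mathcal{Z}|) = O_t(n^{1/3-\eps})$ points per curve, contributing $O_t(n^{4/3-\eps})$ arcs that are disjoint from $\mathcal{Z}$. Stage 3 then applies Theorem~\ref{projectedPartitioning} with $e=1$, $f=3$, $d=4$ and $D\approx n^{1/3}$ to obtain a vertical polynomial $P(x_1,x_2,x_3)$ of degree $\leq D$; the complement $\RR^4 \setminus Z(P)$ has $O(D^3)$ cells, each meeting $O(n/D^2)$ curves of $\mathcal{C}_2$. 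Crucially, because $P$ is vertical, distinct cells of $\RR^4 \setminus Z(P)$ project to distinct cells of $\RR^3 \setminus Z(P)$, so arcs confined to distinct cells have disjoint $(x_1,x_2,x_3)$-projections and cannot form depth cycles with one another.

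Handle the curves lying on $Z(P)$ through the factorization $P = \prod_j P_j$ into irreducibles. If $\deg P_j > 100t^2$, the contrapositive of Lemma~\ref{manyCurvesInSurface} shows $\pi(Z(P_j))$ is not doubly-ruled, so Lemma~\ref{verticalHyperDoublyRuled} cuts these curves into $O_{D_j,t}(n_j)$ arcs with pairwise disjoint projections; summing is at most $O_t(nD) = O_t(n^{4/3})$ total arcs. If $\deg P_j \leq 100t^2$, Stage 1 guarantees $n_j < n^{2/3+\eps}$, so Corollary~\ref{corOfSZ} applied inside $Z(P_j)$ produces $O_{t,\eps}(n_j^{3/2+\eps})$ arcs, and convexity together with $\sum_j n_j \leq n$ yields $O_{t,\eps}(n^{(2/3+\eps)(1/2+\eps)} \cdot n) = O_{t,\eps}(n^{4/3 + O(\eps)})$ total. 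For curves not on $Z(P)$, cut at the $O_t(D)$ transverse intersections with $Z(P)$, contributing another $O_t(nD) = O_t(n^{4/3})$ arcs; each remaining arc then lies in a single cell of $\RR^4 \setminus Z(P)$.

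The main obstacle is eliminating the remaining proper depth cycles inside each cell. Naively invoking Corollary~\ref{corOfSZ} inside cell $i$ produces $O_{t,\eps}(n_i^{3/2+\eps})$ arcs, and summing via $\sum_i n_i \leq nD$ and $n_i \leq n/D^2$ only gives $O(n^{3/2 + O(\eps)})$, which exceeds the $n^{4/3+3\eps}$ target. To close the gap I would exploit the non-degeneracy of $\mathcal{C}_2$ at the cell level: because no low-degree vertical hypersurface contains many curves of $\mathcal{C}_2$, the $O(n/D^2)$ arcs meeting each cell form a ``spread'' configuration to which a Szemer\'edi--Trotter-type incidence bound for $2$-rich projection points applies. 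Cutting each arc at every $2$-rich projection point makes the arcs pairwise pseudo-segments in projection, which precludes proper depth cycles; the target bound $O_{t,\eps}(n^{4/3+\eps})$ on the total number of such cut points follows, with the extra slack in $|\mathcal{Z}| \leq 2n^{1/3-\eps}$ absorbing any additional heavy hypersurfaces produced by this refined inside-cell analysis. Assembling all contributions and folding $\eps$-dependent factors into $A = A(t, \eps)$ delivers $|\mathcal{D}| \leq An^{4/3+3\eps}$, as claimed.
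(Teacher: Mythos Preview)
Your extraction of $\mathcal{Z}$ and the handling of curves trapped in $Z(P)$ are reasonable, but the argument breaks down in the final paragraph, and the choice $f=3$ in Theorem~\ref{projectedPartitioning} is the underlying reason. With a vertical partitioning polynomial of degree $D\approx n^{1/3}$ you correctly observe that inter-cell depth cycles vanish for free, but each of the $O(D^3)$ cells now meets $O(n/D^2)$ curves rather than $O(n/D^3)$. Your proposed remedy, cutting at all $2$-rich projection points and invoking a ``Szemer\'edi--Trotter-type'' bound, does not work: for $n_i$ bounded-degree curves in $\RR^3$, the number of $2$-rich points is $\Theta(n_i^{3/2})$ even under the non-degeneracy hypothesis that no low-degree surface contains many of them (this is the content of results such as those in \cite{GZ}), so you recover only $O(n^{3/2+\eps})$ total cuts, not $O(n^{4/3+\eps})$. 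Moreover, cutting at every $2$-rich projection point makes the projections pairwise \emph{disjoint}, which is far stronger than eliminating depth cycles and correspondingly far more expensive; there is no cheaper ``pseudo-segment'' variant here that gives the bound you claim. A secondary problem is that with $D\approx n^{1/3}$ the constants $O_{D,t}(\cdot)$ in Lemma~\ref{verticalHyperDoublyRuled} are no longer harmless.

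The paper takes $f=4$ (a full, non-vertical partition) and argues by \emph{induction on $n$} with $D$ a large constant depending only on $t,\eps$. Each of the $O(D^4)$ cells then meets $O(n/D^3)$ curves, and applying the induction hypothesis inside each cell contributes $A_0D^4\cdot A(n/D^3)^{4/3+3\eps}=A\,n^{4/3+3\eps}\cdot A_0^{O(1)}D^{-9\eps}$ cuts, which is $<\tfrac{A}{10}n^{4/3+3\eps}$ once $D$ is large enough; this is exactly what closes the induction, and it fails with $f=3$ (the analogous exponent of $D$ is $1/3-6\eps>0$ for small $\eps$). The price of $f=4$ is that cells can overlap in their $(x_1,x_2,x_3)$-projection, so inter-cell depth cycles must be handled by hand: the paper does this via the auxiliary vertical hypersurface $V_P$ of \eqref{defnVP} and the height function $h_P$, cutting along $V_P$ and $Z(P)$ at cost $O_{D,t}(n)$. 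The set $\mathcal{Z}$ is then assembled \emph{a posteriori} from the low-degree components of $V_P$ together with the hypersurfaces produced by the recursive calls, and trimmed to those containing $\ge n^{2/3+\eps}$ curves; this is where the factor $2$ in $|\mathcal{Z}|\le 2n^{1/3-\eps}$ and the inclusion--exclusion computation \eqref{inclusionExclusionEqn} enter.
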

\begin{proof}
We will prove the result by induction on $n$. If $n$ is small then the result is immediate provided we choose the constant $A$ sufficiently large; simply choose $\mathcal{Z}=\emptyset$ and cut the curves from $\mathcal{C}$ at each singular point of each curve and at each point where two or more curves have the same projection to the $(x_1,x_2,x_3)$ hyperplane.

We will now discuss the induction step. By Theorem \ref{projectedPartitioning} (with $d=4,\ e=1,$ and $f=4$) there is a partitioning polynomial $P\in\RR[x_1,x_2,x_3,x_4]$ of degree $D$ so that $\RR^4\backslash Z(P)$ is a union of at most $A_0D^4$ cells and at most $A_0nD^{-3}$ curves from $\mathcal{C}$ intersect each cell, where $A_0$ is a constant that depends only on $t$. The number $D$ will be chosen later; it will depend on $\eps$ and $t$, but not on $n$.

Let $\gamma$ and $\gamma^\prime$  be closed Jordan arcs in $\RR^4$, each of which is contained in a curve from $\mathcal{C}$ (though $\gamma$ and $\gamma^\prime$ need not be contained in the same curve), and suppose that $\gamma$ and $\gamma^\prime$ form a minimal proper depth cycle. Then at least one of the following four things must happen:
\begin{enumerate}
\item\label{sameCell} $\gamma$ and $\gamma^\prime$ are entirely contained in the same cell.
\item\label{differentCells} $\gamma$ and $\gamma^\prime$ are each entirely contained in a cell, but these cells are different.
\item\label{properlyIntersect} At least one of $\gamma$ or $\gamma^\prime$ intersects $Z(P)$ but is not contained in $Z(P)$.
\item\label{containedIn} Both of $\gamma$ and $\gamma^\prime$ are contained in $Z(P)$.
\end{enumerate}

We will cut the curves in $\mathcal{C}$ to eliminate each of these types of depth cycles.

\subsubsection{Eliminating depth cycles of Type \ref{sameCell}}
We will first describe a procedure to eliminate all depth cycles of Type \ref{sameCell}. Let $\Omega$ be the set of cells. For each cell $\omega\in\Omega$, apply the induction hypothesis to $\mathcal{C}_{\omega}$ (recall from \eqref{defnZOmega} that this is the set of curves that intersect $\omega$). We obtain sets $\mathcal{Z}_{\omega}$, $\mathcal{C}_{\omega,1},\ \mathcal{C}_{\omega,2}$, and $\mathcal{D}_\omega$ so that:

\begin{itemize}
\item $\mathcal{C}_{\omega}=\mathcal{C}_{\omega,1}\sqcup\mathcal{C}_{\omega,2}$.
\item Each vertical hypersurface $Z\in\mathcal{Z}_\omega$ has degree at most $100t^2$.
\item $|\mathcal{Z}_\omega|\leq 2|\mathcal{C}_{\omega}|^{1/3-\eps}\leq (A_0n/D^3)^{1/3-\eps}$.
\item Each curve in $\mathcal{C}_{\omega,1}$ is contained in (at least one) vertical hypersurface from $\mathcal{Z}_\omega$.
\item Each $Z\in\mathcal{Z}_\omega$ contains at least $|\mathcal{C}_{\omega}|^{2/3+\eps}$ curves from $\mathcal{C}_{\omega,1}$.
\item $|\mathcal{D}_\omega|\leq A|\mathcal{C}_\omega|^{4/3+3\eps}\leq  A(A_0n/D^3)^{4/3+3\eps}$.
\item $\mathcal{D}_\omega$ is a cutting of $\mathcal{C}_{\omega,2}$ into Jordan arcs; each arc is disjoint from each hypersurface in $\mathcal{Z}_\omega$.
\item The arcs from the cutting $\mathcal{D}_\omega$ contain no proper depth cycles.
\end{itemize}
Define
$$
\mathcal{D}_1=\bigcup_{\omega}\mathcal{D}_{\omega}.
$$
We have that
$$
|\mathcal{D}_1|\leq (A_0D^4) A(A_0n/D^3)^{4/3+3\eps} = (A_0^{7/3+3\eps} D^{-9\eps}) (An^{4/3+3\eps}).
$$
If $D$ is chosen sufficiently large (depending only on $\eps$ and $A_0$, which in turn depends only on $t$), then 
$$
|\mathcal{D}_1|\leq \frac{A}{10}n^{4/3+3\eps}.
$$

Define $\mathcal{Z}_1 = \bigcup_{\omega}\mathcal{Z}_\omega$. It would be good if $|\mathcal{Z}_1|\leq 2n^{1/3-\eps}$, but all we know is that $|\mathcal{Z}_1| \leq A_0^{4/3-\eps}D^{3+3\eps}n^{1/3-\eps};$ this bound is too weak to close the induction. We will ``fix'' this issue below. 

Let $V_P$ be the vertical hypersurface defined in \eqref{defnVP}. For each component $V^\prime\subset V_P$ that has degree $>100t^2$, use Lemma \ref{verticalHyperDoublyRuled} to cut the curves of $\mathcal{C}_{V^\prime}$ into $O_{t,D}(n)$ Jordan arcs, no two of which form a proper depth cycle. Denote this cutting by $\mathcal{D}_{V^\prime}$ and let $\mathcal{D}_2=\bigcup \mathcal{D}_{V^\prime},$ where the union is taken over all irreducible components of $V_{P}$ that have degree $>100t^2$. We have $|\mathcal{D}_2|=O_{D,t}(n)$. 

Let $\mathcal{Z}_2$ be the union of $\mathcal{Z}_1$ and the irreducible components of $V_P$ that have degree $\leq 100d^2$. We have that 
$$
|\mathcal{Z}_2| \leq D^2 + A_0^{4/3-\eps}D^{3+3\eps}n^{1/3-\eps} \leq 2A_0^{4/3-\eps}D^{3+3\eps}n^{1/3-\eps}.
$$

% For each $Z\in\mathcal{Z}_2$, define 
% $$
% \mathcal{C}_Z = \{C\in\mathcal{C}\colon C\subset Z\}.
% $$
Define
$$
\mathcal{Z} = \{Z\in\mathcal{Z}_2\colon |\mathcal{C}_Z|\geq n^{2/3+\eps}  \}.
$$
Observe that if $Z_1,Z_2\in\mathcal{Z}_2$ are distinct, then the projection of $Z_1\cap Z_2$ to the $(x_1,x_2,x_3)$ hyperplane is an algebraic curve of degree at most $A_t = (100t^2)^2$. In particular, since no two curves have $(x_1,x_2,x_3)$ projections that share a common component,  at most $A_t$ curves $C\in\mathcal{C}$ can satisfy $C\subset Z_1\cap Z_2$. Combining this observation with the inclusion-exclusion principle, we conclude that 
\begin{equation}\label{inclusionExclusionEqn}
\begin{split}
n^{2/3+\eps}|\mathcal{Z}|&\leq \sum_{Z\in\mathcal{Z}_2}|\mathcal{C}_Z|\\
&\leq \Big|\bigcup_{Z\in\mathcal{Z}_2}\mathcal{C}_Z\Big| + \sum_{Z_1,Z_2\in\mathcal{Z}_2}|\mathcal{C}_{Z_1}\cap \mathcal{C}_{Z_2}|\\
&\leq n + A_t|\mathcal{Z}_2|^2 \\
&\leq n + A_t|\mathcal{Z}_2|^2\\
&\leq n + A_tA_0^{8/3}D^{6}n^{2/3}\\
&\leq 2n,
\end{split}
\end{equation}
where on the final line we used the fact that if $n$ is sufficiently large (compared to $t$ and $\eps$), then $A_tA_0^{8/3}D^{6}n^{2/3}\leq n$. Thus $|\mathcal{Z}|\leq 2n^{1/3-\eps}$. Define 
$$
\mathcal{C}_1=\bigcup_{Z\in\mathcal{Z}}\mathcal{C}_Z,
$$
and define $\mathcal{C}_2=\mathcal{C}\backslash\mathcal{C}_1$. Note that $\mathcal{Z}$ and $\mathcal{C}_1$ satisfy the first four requirements from Lemma \ref{cuttingSpaceCurvesLem}. 

For each $Z\in\mathcal{Z}_2\backslash\mathcal{Z}$, use Corollary \ref{corOfSZ} to cut the curves in $\mathcal{C}_Z$ into at most
$$
A_{t,\eps}|\mathcal{C}_Z|^{3/2+\eps}\leq A_{t,\eps}(n^{2/3+\eps})^{3/2+\eps}=A_{t,\eps}n^{1+3\eps}
$$ 
Jordan arcs, so that the resulting collection of arcs contains no depth cycles. 
%
%In addition, cut each curve $C\in\mathcal{C}_Z$ at each point of intersection $C\cap Z^{\prime}$ for each $Z^\prime\in\mathcal{Z}_2$. This step adds an additional $O_{t}(n^{4/3-\eps})\leq \frac{1}{10}n^{4/3+2\eps}$ cuts and after it has been performed, none of the Jordan arcs intersect any of the vertical hypersurfaces from $\mathcal{Z}_2$. 
The total number of cuts required to perform this step is 
$$
\leq A_{t,\eps}\big(n^{1+3\eps}|\mathcal{Z}_2\backslash\mathcal{Z}|\big)=A_{t,\eps}\big(D^{3+3\eps}n^{1/3-\eps}n^{1+3\eps}\big)\leq\frac{1}{5}n^{4/3+3\eps}.
$$
Call the resulting cutting $\mathcal{D}_3$.

Observe that at this point, if $C,C^\prime\in\mathcal{C}_2$, and if $\gamma\subset C$ and $\gamma^\prime\subset C^\prime$ are Jordan arcs contained inside the same cell that form a proper depth cycle, then there must either be a point $x\in\gamma$ with $(x,C)\in\mathcal{D}_1\cup\mathcal{D}_2\cup\mathcal{D}_3$, or there must be a point $y\in\gamma^\prime$ with $(y,C^\prime)\in \mathcal{D}_1\cup\mathcal{D}_2\cup\mathcal{D}_3$. In other words, for each cell $\omega\in\Omega$, all depth cycles of Type \ref{sameCell} have been eliminated. It remains to eliminate the other types of depth cycles. 
\subsubsection{Eliminating depth cycles of Type \ref{differentCells}, \ref{properlyIntersect}, and \ref{containedIn}}
Let 
\begin{equation*}
\begin{split}
\mathcal{D}_4 &= \{(C,x)\in\mathcal{C}_2\times\RR^4\colon C\not\subset V_P,\ x\in C\cap V_P\},\\
\mathcal{D}_5 & = \{(C,x)\in\mathcal{C}_2 \times\RR^4\colon C\not\subset Z(P),\ x\in C\cap Z(P)\}.
\end{split}
\end{equation*}
We have $|\mathcal{D}_4\cup\mathcal{D}_5|=O_{D,t}(n)$. 

The cutting $\mathcal{D}_5$ eliminates all depth cycles of Type \ref{properlyIntersect}. Next we will argue that the cutting $\mathcal{D}_4$ eliminates all depth cycles of Types \ref{differentCells} and \ref{containedIn}. Let $\gamma$ and $\gamma^\prime$ be closed Jordan arcs that form a minimal depth cycle, and suppose that either each of $\gamma$ and $\gamma^\prime$ are entirely contained in distinct cells, or each of $\gamma$ and $\gamma^\prime$ are contained in $Z(P)$. If $x$ and $y$ are the endpoints of $\gamma$ and if $x^\prime, y^\prime$ are the endpoints of $\gamma^\prime$, then after interchanging the roles of $\gamma$ and $\gamma^\prime$ if necessary, we have $h(x)\geq h(x^\prime)$ and $h(y)\leq h(y^\prime)$, and at least one of these inequalities must be strict. In particular, there must exist a point $z_0$ on either $\gamma$ or $\gamma^\prime$ where the function $h(z)$ changes value. Suppose the point is on $\gamma$. Since every point of this type is contained in $V$, we have that $(C,z_0)\in\mathcal{D}_4$, where $C$ is the curve containing $\gamma$. If instead the point is on $\gamma^\prime$, then an identical argument applies with $C^\prime$ in place of $C$.

Finally, let 
$$
\mathcal{D}_6=\bigcup_{Z\in\mathcal{Z}}\{(C,x)\colon\ x\ \textrm{is a singular point of}\ C,\ \textrm{or}\ \pi(x)\ \textrm{is a}\ x_2\textrm{-extremal point of}\ \underline C\},
$$
where $\pi\colon\RR^3\to\RR^2$ is the projection to the $(x_1,x_2)$ plane. We have that $|\mathcal{D}_6|=O_{t}(|\mathcal{C}_2|)$. Let $\mathcal{D}=\mathcal{D}_1\cup\ldots\mathcal{D}_6$. If $A$ is chosen sufficiently large (depending only on $\eps$ and $t$), then $|\mathcal{D}|\leq An^{4/3+3\eps}$. By construction, $\mathcal{D}$ is a cutting of the curves in $\mathcal{C}_2$ into Jordan arcs so that all depth cycles are eliminated. Furthermore, each arc in the cutting is disjoint from each surface in $\mathcal{Z}$. This completes the induction step and finishes the proof. 
\end{proof}
The main motivation for \cite[Theorem 1.2]{SZ} is that there is a transformation from plane curves in $\RR^2$ to space curves in $\RR^3$ so that lenses (pairs of curves that intersect at two common points) become depth cycles. Thus \cite[Theorem 1.2]{SZ} allows one to cut a set of $n$ algebraic plane curves into $O(n^{3/2+\eps})$ pseudo-segments. 

We would like to do something similar in $\RR^3$. Recall that our procedure for lifting space curves from $\RR^3$ to $\RR^4$ privileges the $x_1x_2$ plane. This can be problematic, because additional intersections can be introduced when we project a pair of circles $C,C^{\prime}$ from $\RR^3$ to the $x_1x_2$ plane. The next lemma says that there is a finite set $\mathcal{V}$ of unit vectors, so that for each pair of space circles $C,C^\prime$ that intersect at two points, there is vector $v\in\mathcal{V}$ so that after rotating $\RR^3$ to make $v$ the $x_3$ axis, the problem described above is resolved.

\begin{lem}\label{spaceCirclesLeadToDepthCyclesLem}
There is a finite set of unit vectors $\mathcal{V}\subset\RR^3$ so that the following holds. Let $C$ and $C^\prime$ be circles in $\RR^3$ that intersect at the two distinct points $x$ and $y$. Let $\tilde C\subset C$ be the shorter (closed) arc of $C$ with endpoints $x$ and $y$, and similarly for $\tilde C^\prime\subset C^\prime$. Suppose that the length of $\tilde C$ is at most $1/100$ the circumference of $C$, and similarly for $\tilde C^\prime$. Then there is a vector $v\in\mathcal{V}$ so that the orthogonal projection $\pi_{v^\perp}\colon \RR^3\to v^\perp$ is injective on $\tilde C \cup\tilde C^{\prime}$. 
\end{lem}
\begin{proof}
Let $\mathcal{V}$ be a set of unit vectors, so that every vector in $\RR^3$ makes an angle $\leq 1$ degree with one of the vectors in $\mathcal{V}$. The set $\mathcal{V}$ is clearly finite. Next, let $C$ and $C^\prime$ be circles in $\RR^3$ that intersect at the two distinct points $x$ and $y$, and let $\tilde C\subset C$, $\tilde C^{\prime}\subset C^\prime$ be the shorter (closed) arcs of $C$ and $C^\prime$, respectively, with endpoints $x$ and $y$. Suppose that the length of $\tilde C$ is at most $1/100$ the circumference of $C$, and similarly for $\tilde C^\prime$. We will show that there exists a vector $v\in\mathcal{V}$ so that the projection $\pi_{v^\perp}\colon \RR^3\to v^\perp$ is injective on $\tilde C \cup\tilde C^{\prime}$. 

First, if $C$ and $C^\prime$ are contained in a common plane $S\subset\RR^3$, select $v\in\mathcal{V}$ to be any vector not parallel to $S$. Since $v$ is not parallel to $S$, the projection $\pi_{v^\perp}\colon S\to v^\perp$ is injective, and thus $\pi_{v^\perp}$ is injective on $\tilde C \cup\tilde C^{\prime}$.

Next, if $C$ and $C^\prime$ are not coplanar, let $S\subset\RR^3$ be the unique sphere containing $C$ and $C^\prime$; such a sphere must exist since $C$ and $C^\prime$ intersect at two points. Select $v\in\mathcal{V}$ to be a vector that makes angle $\leq 1$ degree with the normal vector of $S$ at $x$. Observe that 
\begin{equation*}
\operatorname{radius}(S)\geq\max(\operatorname{radius}(C),\ \operatorname{radius}(C^\prime)).
\end{equation*}
Let $\tilde S\subset S$ be the spherical cap of diameter $(1/10)\operatorname{radius}(S)$ centered at $x$. We have that $\tilde C \subset \tilde S$ and $\tilde C^\prime\subset \tilde S$.
The projection $\pi_{v^\perp}\colon \tilde S\to v^{\perp}$ is injective (this is a simple geometry exercise), and thus $\pi_{v^\perp}$ is injective on $\tilde C \cup\tilde C^{\prime}$.
\end{proof}

\begin{cor}\label{cuttingCirclesR3}
Let $\mathcal{C}$ be a set of $n$ circles in $\RR^3$. Suppose that at most $B$ circles can lie in a common algebraic variety of degree $\leq 400$. Then the circles in $\mathcal{C}$ can be cut into
$
O_{\eps}(n^{4/3+\eps} + nB^{1/2+\eps}  )
$
pseudo-segments.
\end{cor}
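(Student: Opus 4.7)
My plan is to apply Theorem \ref{cuttingSpaceCurvesLem} to the lifted curves $\beta(\mathcal{C}) \subset \RR^4$ and then translate the resulting depth-cycle-free cutting back to a pseudo-segment decomposition of $\mathcal{C}$ in $\RR^3$ via Lemma \ref{spaceCirclesLeadToDepthCyclesLem}. First I would apply a generic rotation of $\RR^3$ so that the projection $\underline{C}$ of each $C \in \mathcal{C}$ to the $(x_1,x_2)$ plane is an ellipse, each lift $\beta(C)$ is an irreducible space curve of bounded degree $t_0$ with $100 t_0^2 \leq 400$, and no two lifts share an $(x_1,x_2,x_3)$-hyperplane component. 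I would then feed $\beta(\mathcal{C})$ into Theorem \ref{cuttingSpaceCurvesLem}, obtaining a partition $\mathcal{C} = \mathcal{C}_1 \sqcup \mathcal{C}_2$, a set $\mathcal{Z}$ of vertical hypersurfaces in $\RR^4$ of degree $\leq 400$, and a cutting $\mathcal{D}$ of $\beta(\mathcal{C}_2)$ into $O_\eps(n^{4/3+\eps})$ Jordan arcs containing no proper depth cycles and disjoint from each $Z \in \mathcal{Z}$.

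Next, I would project these arcs back to $\RR^3$ via the forgetful map $\pi(x_1,x_2,x_3,x_4) = (x_1,x_2,x_3)$ and argue that (after adding $O(n)$ extra cuts at singular points and at $x_2$-extremals of each circle) the projected arcs form a set of pseudo-segments. For contradiction, suppose two projected arcs $\gamma \subset C$ and $\gamma' \subset C'$ contain two common points $x,y$; since two distinct circles share at most two points, $x$ and $y$ must be the two circle-circle intersections. I would then select closed sub-arcs of $\gamma$ and $\gamma'$ running between $x$ and $y$, neither of which contains an $x_2$-extremal of its underlying $\underline{C}$, and apply Lemma \ref{spaceCirclesLeadToDepthCyclesLem} to their lifts to produce a proper depth cycle --- contradicting the defining property of $\mathcal{D}$. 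This shows $\mathcal{C}_2$ contributes $O_\eps(n^{4/3+\eps})$ pseudo-segments.

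For $\mathcal{C}_1$, every circle lies on $\pi(Z) \subset \RR^3$ for some $Z \in \mathcal{Z}$, an algebraic variety of degree $\leq 400$, and by hypothesis each $\pi(Z)$ contains at most $B$ circles from $\mathcal{C}$. Assigning each circle in $\mathcal{C}_1$ to one such variety and writing $k_Z \leq B$ for the number assigned to $\pi(Z)$, we have $\sum_Z k_Z \leq n$. For each $Z$ I would cut the $k_Z$ circles into $O_\eps(k_Z^{3/2+\eps})$ pseudo-segments: on plane or sphere components of $\pi(Z)$ this is \cite{ArS} (composed with stereographic projection in the sphere case), and on any remaining irreducible component Lemma \ref{circlesPlanesSpheres} yields $I(\pts_2(\mathcal{C}),\mathcal{C}) = O(k_Z)$, so cutting at all 2-rich points suffices. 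Summing gives $\sum_Z k_Z^{3/2+\eps} \leq B^{1/2+\eps}\sum_Z k_Z \leq nB^{1/2+\eps}$, supplying the second term.

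The hardest step is the depth-cycle-to-pseudo-segment translation in Step 3: one must verify that whenever two projected arcs accidentally share two endpoints, closed sub-arcs avoiding all $x_2$-extremals and with matching endpoints above $x$ and $y$ can genuinely be extracted, and that the $\mathcal{D}_6$-type cuts built into the proof of Theorem \ref{cuttingSpaceCurvesLem} are robust enough to guarantee this. A secondary difficulty is the plane/sphere case of Step 4, since one must invoke a uniform $O_\eps(k^{3/2+\eps})$ pseudo-segment bound for circles on a 2-dimensional component of an arbitrary degree-$\leq 400$ variety; in particular, handling a general sphere requires that stereographic projection sends circles on the sphere to circles in the plane so that \cite{ArS} applies directly.
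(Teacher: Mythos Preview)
Your proposal is correct and follows essentially the same route as the paper: lift to $\beta(\mathcal{C})\subset\RR^4$, apply Theorem~\ref{cuttingSpaceCurvesLem}, translate the depth-cycle-free cutting of $\mathcal{C}_2$ back to pseudo-segments via Lemma~\ref{spaceCirclesLeadToDepthCyclesLem}, and handle $\mathcal{C}_1$ surface by surface using the hypothesis $|\mathcal{C}_Z|\leq B$ together with $\sum_Z|\mathcal{C}_Z|^{3/2+\eps}\leq B^{1/2+\eps}\sum_Z|\mathcal{C}_Z|=O(nB^{1/2+\eps})$.

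The one place where you work harder than necessary is Step~4. Your case split into plane/sphere components (via \cite{ArS} and stereographic projection) versus other components (via Lemma~\ref{circlesPlanesSpheres}) is not needed. For \emph{any} set of $k$ circles in $\RR^3$, one can lift to $\RR^4$, apply Corollary~\ref{corOfSZ} to get $O_\eps(k^{3/2+\eps})$ arcs with no proper depth cycles, and project back; Lemma~\ref{spaceCirclesLeadToDepthCyclesLem} then shows the projected arcs are pseudo-segments. This is what the paper does, in one line, for each $\mathcal{C}_Z$. Your approach works too, but it introduces the secondary difficulty you flag (uniformity of the bound across all low-degree components) for no gain. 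One small point neither you nor the paper spells out: arcs coming from circles in distinct surfaces $Z_1\neq Z_2$ must also be mutually pseudo-segments; this is handled by additionally cutting each $\beta(C)$, $C\in\mathcal{C}_1$, at its intersections with every other $Z\in\mathcal{Z}$, costing $O(|\mathcal{C}_1|\cdot|\mathcal{Z}|)=O(n^{4/3})$ extra cuts, and then arguing as in your Case-3 style that an arc on $Z_1$ disjoint from $Z_2$ cannot meet a circle in $\pi(Z_2)$ (since $Z_2$ is vertical).
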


\begin{rem}
In fact, we can replace the requirement that at most $B$ circles lie in a variety of degree $\leq 400$ with the requirement that at most $B$ circles lie in a surface that is doubly ruled by circles. The set of all such surfaces has been classified in \cite{Sk}. However this classification will not be relevant for our proof. The only surfaces that will concern us are planes and spheres; these are the only low degree surfaces that contain many circles through each point.
\end{rem}
\begin{proof}
First, cut each circle in $\mathcal{C}$ into 100 arcs of equal length; we will call this the first cutting.

Next, let $\mathcal{V}$ be the set of unit vectors from Lemma \ref{spaceCirclesLeadToDepthCyclesLem}. For each $v\in\mathcal{V}$, let $O_v\colon\RR^3\to\RR^3$ be an orthogonal transformation sending $v$ to the vector $(0,0,1)$. Define $O_v(\mathcal{C}) = \{O_v(C)\colon C\in\mathcal{C}\}$; this is a set of circles in $\RR^3$, and $\{ \pi_{(0,0,1)^\perp}(C)\colon C\in O_v(\mathcal{C}) \}$ is a collection of ellipses in the plane (by perturbing the vectors in $\mathcal{V}$ slightly if necessary, we can ensure that none of the projected circles are line segments). For each $v\in\mathcal{V}$, we will cut each of these ellipses at their two points of vertical tangency. This corresponds to $2|\mathcal{V}| n=O(n)$ cuts to the original set of circles. We will call this the second cutting.

Now, observe that if $C,C^{\prime}\in\mathcal{C}$ are circles that intersect at two distinct points $x,y\in\RR^3$, then either (A): at most one of the four arcs from $C$ and $C^{\prime}$ connecting $x$ and $y$ remains uncut, or (B): the (shorter) arc $\tilde C \subset C$ between $x$ and $y$ has length at most $1/100$ the circumference of $C$, and similarly for the shorter arc $\tilde C^\prime \subset C^\prime$. If (A) occurs, then the first cutting has ensured that at most one of the segments from $C$ or $C^\prime$ contain both $x$ and $y$. 

Next, suppose that (B) occurs, and let $\tilde C\subset C$ and $\tilde C^{\prime}\subset C^{\prime}$ be the shorter arcs of $C$ and $C^{\prime}$, respectively, with endpoints $x$ and $y$. By Lemma \ref{spaceCirclesLeadToDepthCyclesLem}, there is a unit vector $v\in\mathcal{V}$ so that the projection $\pi_{v^\perp}\colon \RR^3\to v^\perp$ is injective on $\tilde C \cup\tilde C^{\prime}$. We will call $v$ the vector associated to the pair $(C,C^\prime)$. If there is more than one such vector $v\in\mathcal{V}$, we will choose one arbitrarily. Define $\pi\colon\RR^3\to\RR^2$ to be the projection $(x_1,x_2,x_3)\mapsto(x_1,x_2)$. Either (B.1) at least one of the ellipse segments $\pi(O_v(\tilde C))$ or $\pi(O_v(\tilde C^\prime))$ contains a point of vertical tangency, or (B.2) neither of these ellipse segments contain a point of vertical tangency. If (B.1) occurs, then the second cutting has ensured that at most one of the segments from $C$ or $C^\prime$ contain both $x$ and $y$. 

If (B.2) occurs, then the closed Jordan arcs $\gamma = \pi(O_v(\tilde C))\subset\RR^2$ and $\gamma^\prime = \pi(O_v(\tilde C^\prime))\subset\RR^2$ intersect only at the two points $\pi(O_v(x))$ and $\pi(O_v(y)).$ This means that the two arcs form a (planar) lens. Furthermore, since neither of these arcs have a point of vertical tangency, by  \cite[Lemma 2.6]{SZ}, exactly one of the following two things must happen: $\gamma$ has larger slope then $\gamma^\prime$ at $\pi(O_v(x))$ and smaller slope at $\pi(O_v(y))$, or $\gamma$ has smaller slope then $\gamma^\prime$ at $\pi(O_v(x))$ and larger slope at $\pi(O_v(y))$. This means that the lifted curves $\beta(\tilde C)$ and $\beta(\tilde C^{\prime})$ form a depth cycle. The next step is to perform additional cuts to disrupt all depth cycles of this type.

For each $v\in\mathcal{V}$, apply Theorem \ref{cuttingSpaceCurvesLem} to the set of circles $O_v(\mathcal{C})$ (recall that Theorem \ref{cuttingSpaceCurvesLem} privileges the $x_1x_2$ plane, which has been rotated by the orthogonal transformation $O_v$). We obtain a cutting of the circles in $O_v(\mathcal{C})$ and a collection of surfaces, each of which contain many circles from $O_v(\mathcal{C})$. Let $\mathcal{D}_v$ and $\mathcal{Z}_v$ be the image of this cutting and the surfaces in $\mathcal{Z}$, respectively, under the inverse transformation $O_v^{-1}$. Thus $\mathcal{D}_v$ is a cutting of the circles in $\mathcal{C}$ (as opposed to the circles in $O_v(\mathcal{C})$), and $\mathcal{Z}_v$ is a collection of surfaces in $\RR^3$, each of which contain many circles from $\mathcal{C}$. 

Let 
\[
\mathcal{D} = \bigcup_{v\in\mathcal{V}}\mathcal{D}_v,\quad \mathcal{Z} = \bigcup_{v\in\mathcal{V}}\mathcal{Z}_v.
\]
By Theorem \ref{cuttingSpaceCurvesLem} we have $|\mathcal{D}|\leq |\mathcal{V}| O_{\eps}(n^{4/3+\eps})= O_{\eps}(n^{4/3+\eps})$. We will call these cuts the third cutting.

If $B\leq n^{2/3+\eps}$, then $\mathcal{Z}$ is empty. Otherwise, for each $Z\in\mathcal{Z}$, cut the circles contained in $Z$ into pseudo-segments; this can be done with $|\mathcal{C}_Z|^{3/2+\eps}$ cuts. We will call this the fourth cutting. Note that $\sum_{Z\in\mathcal{Z}}|\mathcal{C}_Z|=O(n)$, and each set $\mathcal{C}_Z$ has size at most $B$. Thus the total number of cuts required to perform the fourth cutting is
\[
\sum_{Z\in\mathcal{Z}}O_\eps(|\mathcal{C}_Z|^{3/2+\eps})=O_\eps (nB^{1/2+\eps}).
\]

Next, for each circle $C\in\mathcal{C}$, cut $C$ at each point where it properly intersects a surface $Z \in\mathcal{Z}$. We will call this the fifth cutting. If $\mathcal{Z}$ is non-empty, then $B> n^{2/3+\eps}$ and thus $|\mathcal{Z}| = O(n/B)$, so $|\mathcal{Z}| = O(n/B) = O(B^{1/2})$. Thus the total number of cuts required to perform the fifth cutting is $O(n|\mathcal{Z}|)=O(nB^{1/2}).$ 

Altogether, the number of cuts required for the five types of cuttings is $O_{\eps}(n^{4/3+\eps} + nB^{1/2+\eps}  )$. It remains to verify that after these cuts have been performed, the resulting collection of curves are pseudo-segments. Let $C,C^\prime\in\mathcal{C}$ be circles that intersect at two distinct points $x$ and $y$. Both of the longer arcs from $C$ and $C^\prime$ containing $x$ and $y$ were cut at the first cutting. 

If the shorter arc of $C$ has length at least $\frac{1}{100}$ the the circumference of $C$, then it was cut at the first cutting. Similarly for $C^\prime$. Otherwise, let $v\in\mathcal{V}$ be the orthogonal transformation associated to the pair $(C,C^\prime)$. If  the projection $\pi(O_v(\tilde C))$ contains a point of vertical tangency, then it was cut at the second cutting. Similarly for  $\pi(O_v(\tilde C^\prime))$. Otherwise, the lifts $\beta(\tilde C)$ and $\beta(\tilde C^{\prime})$ form a depth cycle. 

If neither $C$ nor $C^\prime$ are contained in a surface from $\mathcal{Z}_v$, then at least one the arcs $\tilde C$ or $\tilde C^{\prime}$ was cut at the third cutting. If $C$ and $C^\prime$ are contained in a common surface from $\mathcal{Z}$, then at least one of the shorter arcs was cut at the fourth cutting. Finally, if at least one of $C$ or $C^\prime$ is contained in a surface from $\mathcal{Z}$, but the two circles are not contained in a common surface, then at least one of the arcs $\tilde C$ or $\tilde C^{\prime}$ was cut in the firth cutting. 
\end{proof}

\section{Point-Pseudosegment incidences in $\RR^3$}
In \cite{GK}, Guth and Katz proved the following incidence theorem for points and lines in $\RR^3$.
\begin{thm}\label{GKTheorem}
Let $\pts$ be a set of $m$ points and let $\mathcal{L}$ be a set of $n$ lines in $\RR^3$. Suppose that at most $B$ lines can lie in a common plane. Then the number of point-line incidences is 
$$
O\big(m^{1/2}n^{3/4}+m^{2/3}n^{1/3}B^{1/3}+m+n\big).
$$
\end{thm}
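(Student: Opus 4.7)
The plan is to prove Theorem \ref{GKTheorem} by polynomial partitioning in $\RR^3$ combined with a structural analysis of the lines contained in the partitioning surface, following the original Guth--Katz strategy. First I would dispatch the easy ranges $m \lesssim n^{1/2}$ and $m \gtrsim n^2$ with trivial bounds of $O(n)$ and $O(m)$ respectively. In the middle range, apply Theorem \ref{projectedPartitioning} with $d=3,\ e=0,\ f=3$ to obtain a polynomial $P$ of degree $D$ (to be chosen) so that $\RR^3 \setminus Z(P)$ is a union of $O(D^3)$ cells, each containing $O(m/D^3)$ points. Split $\pts = \pts' \sqcup \pts''$ with $\pts'' = \pts \cap Z(P)$, and $\mathcal{L} = \mathcal{L}' \sqcup \mathcal{L}''$ with $\mathcal{L}''$ the lines of $\mathcal{L}$ entirely contained in $Z(P)$.

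For the cellular incidences $I(\pts',\mathcal{L}')$ I would apply the planar Szemer\'edi--Trotter theorem inside each cell (projecting to a generic plane). Since each line of $\mathcal{L}'$ crosses at most $D+1$ cells we have $\sum_\omega n_\omega \lesssim nD$, so H\"older's inequality gives
$$
\sum_\omega m_\omega^{2/3} n_\omega^{2/3} \lesssim (m/D^3)^{2/3}\cdot D\cdot(nD)^{2/3} = m^{2/3}n^{2/3}D^{-1/3}.
$$
Additionally each line of $\mathcal{L}'$ meets $Z(P)$ in at most $D$ points, contributing $O(nD)$ to $I(\pts'',\mathcal{L}')$. These two contributions balance at $D\sim m^{1/2}n^{-1/4}$, each becoming $O(m^{1/2}n^{3/4})$.

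For the surface incidences $I(\pts'',\mathcal{L}'')$ I would decompose $Z(P)$ into irreducible components $Z_i$. A planar component $Z_i$ contains at most $B$ lines of $\mathcal{L}''$ by hypothesis, and Szemer\'edi--Trotter inside that plane gives $O(m_i^{2/3}B^{2/3} + m_i + B)$ incidences; summing over the $\leq D$ planar components via H\"older (using $\sum m_i\leq m$) bounds the planar contribution by $O(m^{2/3}D^{1/3}B^{2/3} + DB + m)$. For each non-planar irreducible $Z_i$ I would invoke the Cayley--Salmon / flecnode-polynomial structure theorem: either $Z_i$ is doubly ruled, in which case each regular point of $Z_i$ lies on at most two lines of $\mathcal{L}''$ contained in $Z_i$ (so the contribution from $Z_i$ is $O(m_i + n_i)$), or $Z_i$ contains only $O((\deg Z_i)^2)$ lines and is absorbed into the $O(m+n)$ term. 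Balancing the planar term $m^{2/3}D^{1/3}B^{2/3}$ against the cellular term $m^{2/3}n^{2/3}D^{-1/3}$ at $D\sim n/B$ makes both equal to $m^{2/3}n^{1/3}B^{1/3}$; taking the minimum over $D$ of the combined expression produces the stated bound $O(m^{1/2}n^{3/4}+m^{2/3}n^{1/3}B^{1/3}+m+n)$.

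The main obstacle will be the structural step for non-planar components of $Z(P)$: showing that an irreducible non-planar algebraic surface containing many 2-rich lines must be doubly ruled, with at most two of those lines through each regular point. This is the algebraic-geometric core of the Guth--Katz argument and rests on showing that the flecnode polynomial vanishes identically on such a surface; everything else reduces to partitioning plus elementary double-counting. A secondary nuisance is the interaction between components of $Z(P)$ (a line lying in $Z_i\cap Z_j$ should not be double-counted), which I would handle by processing components in decreasing order of degree and removing already-accounted-for lines at each step.
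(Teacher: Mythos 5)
Your overall plan---polynomial partitioning, Szemer\'edi--Trotter in the cells, Cayley--Salmon/flecnode analysis on the partitioning surface---is the standard Guth--Katz route, and it is essentially the strategy the paper follows for the closely related Lemma~\ref{pointPseudosegmentIncidencesLem}. (The paper does not prove Theorem~\ref{GKTheorem} itself; it cites it as an immediate consequence of Guth--Katz.) Your cellular calculation, the bound $\sum_\omega n_\omega\lesssim nD$, the H\"older step giving $m^{2/3}n^{2/3}D^{-1/3}$, and the planar-component accounting are all correct, and your two-sided balancing of $D$ against $m^{1/2}n^{-1/4}$ and $n/B$ does produce the stated exponents. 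However, the treatment of the non-planar irreducible components of $Z(P)$ has two genuine gaps.

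First, the dichotomy you state---``either $Z_i$ is doubly ruled, or $Z_i$ contains only $O((\deg Z_i)^2)$ lines''---is false. It omits singly-ruled surfaces and cones: an irreducible cone or singly-ruled surface of degree $d\geq 3$ is not doubly ruled yet contains infinitely many lines, so $\mathcal{L}''$ could place arbitrarily many lines inside such a $Z_i$. The Cayley--Salmon/Monge bound of $O(d^2)$ lines applies only to surfaces that are not ruled at all. The correct statement, which is what the paper's Corollary~\ref{lowDegreeOrFewIncidences} (via Lemma~\ref{manyCurvesInSurface}) delivers, is a dichotomy about \emph{intersection points per line}: either $Z_i$ is doubly ruled (hence, for lines, $\deg Z_i\leq 2$), or all but $O(d_i^2)$ of the lines $L\subset Z_i$ satisfy $|L\cap\pts_2(\mathcal{L}'')|=O(d_i)$. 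Your description of the ``main obstacle'' as ``a surface containing many 2-rich lines must be doubly ruled'' likewise conflates ``many 2-rich lines'' with ``many 2-rich intersection points per line,'' and is not a true statement for singly-ruled surfaces or cones.

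Second, even with the correct dichotomy, the $O(D^2)$ exceptional lines (and the points lying in $Z(P)_{\operatorname{sing}}$, which you do not address) cannot simply be ``absorbed into the $O(m+n)$ term.'' Each exceptional line can carry many points and vice versa; a $K_{2,2}$-type trivial bound gives at best $D^2m^{1/2}+m\sim m^{3/2}n^{-1/2}+m$ (or $mD+D^2$), which exceeds $m^{1/2}n^{3/4}$ once $m>n^{5/4}$ (resp.\ $m>n$). The standard repair---and the one the paper uses in the proof of Lemma~\ref{pointPseudosegmentIncidencesLem}---is to run the whole argument as an induction on $n$, choosing $c$ in $D=cm^{1/2}n^{-1/4}$ small enough that the exceptional lines, the low-degree and singular-locus residuals, and the curves landing in components of $\mathcal{Q}_1$ together form a strictly smaller instance, so the induction closes with the same constant. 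Your proposal never sets up such an induction (you explicitly treat the residual as negligible), so as written the surface step does not close. With the dichotomy corrected to the Guth--Zahl form and the induction added, the rest of your sketch is sound.
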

\begin{rem}
Guth and Katz actually proved a more general theorem about the number of $k$--rich points spanned by a set of lines in $\RR^3$. However, Theorem \ref{GKTheorem} is an immediate consequence of their result. The proof of Theorem \ref{GKTheorem} is significantly easier than the proof of Guth and Katz's full result.
\end{rem}

We will need a slight variant of Theorem \ref{GKTheorem}, which we will state and prove below.

\begin{lem}\label{pointPseudosegmentIncidencesLem}
Let $\pts$ be a set of $m$ points and let $\mathcal{C}$ be a set of $n$ space curves in $\RR^3$, each of degree at most $t$. Suppose that the curves in $\mathcal{C}$ can be cut into $N$ pseudo-segments. Suppose furthermore that at most $B$ curves from $\mathcal{C}$ can be contained in any surface of degree $\leq100 t^2$. Then the number of point-curve incidences is
\begin{equation*}
O_t \big(m^{1/2}n^{3/4}+m^{2/3}n^{1/3}B^{1/3} + m + N\big).
\end{equation*}

\end{lem}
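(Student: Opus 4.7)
The plan is to follow the Guth--Katz proof of Theorem \ref{GKTheorem}, with the Pach--Sharir pseudo-segment Szemer\'edi--Trotter bound replacing the classical planar point-line incidence bound. The first step is to apply the discrete polynomial partitioning theorem (Theorem \ref{projectedPartitioning} with $e=0$, $f=d=3$) to the point set $\pts$, producing a partitioning polynomial $P$ of degree $D$ (to be chosen) so that $\RR^3 \setminus Z(P)$ decomposes into $O(D^3)$ cells, each containing at most $O(m/D^3)$ points.

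For the cell incidences, let $\Gamma_\omega$ denote the pseudo-segment pieces in cell $\omega$ obtained by further cutting the original $N$ pseudo-segments along $Z(P)$. By B\'ezout, each pseudo-segment lying in a curve $C \not\subset Z(P)$ meets $Z(P)$ in at most $tD$ points, so $\sum_\omega |\Gamma_\omega| \leq N + O(ntD)$. Projecting each cell to a generic plane (which preserves the pseudo-segment property, since the arcs come from bounded-degree algebraic curves) and applying Pach--Sharir gives $I_\omega = O(m_\omega^{2/3}|\Gamma_\omega|^{2/3} + m_\omega + |\Gamma_\omega|)$. Combining the uniform bound $m_\omega \leq O(m/D^3)$ with the H\"older inequality $\sum_\omega |\Gamma_\omega|^{2/3} \leq O(D) \left(\sum_\omega |\Gamma_\omega|\right)^{2/3}$ yields, in the regime $N \leq nD$,
\[ \sum_\omega I_\omega \leq O\!\left(m^{2/3}n^{2/3}D^{-1/3} + m + N + nD\right). \]
Curves not contained in $Z(P)$ contribute an additional $O(ntD)$ incidences with points on $Z(P)$ by B\'ezout. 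Setting $D = \lceil m^{1/2}n^{-1/4}\rceil$ balances these terms, yielding a combined contribution of $O(m^{1/2}n^{3/4} + m + N)$.

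It remains to bound incidences between points on $Z(P)$ and curves contained in $Z(P)$. Decompose $Z(P) = \bigcup_i Z_i$ into irreducible components with $\sum_i d_i \leq D$, where $d_i := \deg Z_i$. For components with $d_i \leq 100 t^2$, the hypothesis gives $n_i \leq B$, where $n_i$ denotes the number of curves of $\mathcal{C}$ contained in $Z_i$. Projecting each such $Z_i$ to a generic plane and applying pseudo-segment Szemer\'edi--Trotter yields $I_i = O(m_i^{2/3}n_i^{2/3} + m_i + n_i)$. Assigning each point of $\pts \cap Z(P)$ to a single component (so that $\sum_i m_i \leq O(m)$ and $\sum_i n_i \leq n$), H\"older's inequality with $p = 3/2$, $q = 3$ gives
\[ \sum_i m_i^{2/3}n_i^{2/3} \leq \left(\sum_i m_i\right)^{\!2/3}\!\left(\sum_i n_i^2\right)^{\!1/3} \leq m^{2/3}(nB)^{1/3} = m^{2/3}n^{1/3}B^{1/3}. \]
For components with $d_i > 100t^2$, Corollary \ref{lowDegreeOrFewIncidences} implies that all but $O(d_i^2)$ curves in $Z_i$ have at most $O(d_i)$ two-rich points with other curves in $Z_i$; cutting at these two-rich points produces $O(n_i d_i)$ pseudo-segments on $Z_i$, to which pseudo-segment Szemer\'edi--Trotter again applies after a generic projection. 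The main technical obstacle is handling the $O(d_i^2)$ exceptional curves per high-degree component: using $\sum_i d_i^2 \leq D \cdot \max_i d_i$, one must verify that their aggregate contribution is absorbed by the terms $m^{1/2}n^{3/4} + m^{2/3}n^{1/3}B^{1/3} + m + N$; the remainder of the argument is a direct adaptation of Guth--Katz.
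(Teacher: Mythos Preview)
Your overall strategy (partitioning with $D\sim m^{1/2}n^{-1/4}$, Szemer\'edi--Trotter inside cells, low/high degree component split on the boundary) matches the paper's, but there are two genuine gaps.

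\textbf{Projection does not preserve pseudo-segments.} A generic projection $\RR^3\to\RR^2$ of an open cell is not injective, so two disjoint arcs in $\RR^3$ can project to arcs crossing many times; the parenthetical ``since the arcs come from bounded-degree algebraic curves'' only bounds the number of crossings by $t^2$, it does not make it~$1$. The paper sidesteps this completely: inside the cells it never projects, but simply applies the K\H{o}v\'ari--S\'os--Tur\'an bound $I_\omega\lesssim m_\omega n_\omega^{1/2}+n_\omega$ (valid directly in $\RR^3$ because two points lie on at most one pseudo-segment) together with Cauchy--Schwarz, reaching the same $m^{1/2}n^{3/4}+N$ contribution. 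For curves on a low-degree component the paper's Lemma~\ref{incidencesOnASurfae} does project, but from a two-dimensional surface, after restricting to the connected components of $Z(Q)\setminus Z(\partial_{x_3}Q)$ on which the projection is injective; this is what preserves the pseudo-segment property there.

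\textbf{The exceptional curves require induction.} You correctly identify the obstacle: the $O(d_i^2)$ exceptional curves per high-degree component $Z_i$ sum to at most $\sum_i d_i^2\le D^2$ curves, but you cannot simply ``verify that their aggregate contribution is absorbed''---a priori each such curve could be incident to all $m$ points, and no direct bound is available. The paper handles this by induction on $n$: it first reduces to $m\le n^{3/2}$ (so that $D\le cn^{1/2}$), collects the curves lying in $Z(P)_{\operatorname{sing}}$ and the exceptional curves into a set $\mathcal{C}_1\cup\mathcal{C}_2$ of size at most $(1+A_2)D^2\le n/100$ for small enough $c$, and invokes the induction hypothesis on this strictly smaller set of curves. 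This is also where the point-assignment issue is resolved: points on two components lie in $Z(P)_{\operatorname{sing}}$ and are handled together with $\mathcal{C}_1$, so the remaining sets $\pts_Q$ really are disjoint. Without setting up the induction you do not have a bound for these curves, and the argument does not close.
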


\begin{rem}
By Lemma \ref{circlesPlanesSpheres}, if the curves in $\mathcal{C}$ are circles, then the requirement that at most $B$ curves are contained in any surface of degree $\leq100t^2$ can be replaced by the requirement that at most $B$ curves are contained in any plane or sphere.
\end{rem}

Before proving Lemma \ref{pointPseudosegmentIncidencesLem}, we will need the following result, which can also be obtained by slightly modifying the proof in \cite[Lemma 3.1]{SZ}.

\begin{lem}\label{incidencesOnASurfae}
Let $\pts$ be a set of $m$ points and let $\mathcal{C}$ be a set of $n$ space curves in $\RR^3$, each of degree at most $t$. Suppose that the points and curves are contained in a variety $Z(P)$, with $0<\deg(P) \leq D$. Suppose furthermore that the curves in $\mathcal{C}$ can be cut into $N$ pseudo-segments.  Then the number of point-curve incidences is
$$
O_{t,D}(m^{2/3}n^{2/3}+m+N).
$$
\end{lem}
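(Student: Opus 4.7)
The plan is to follow the approach of \cite[Lemma 3.1]{SZ}: apply polynomial partitioning on the surface $Z(P)$ itself. First I would reduce to the case that $P$ is irreducible and generates a real ideal, by factoring $P = P_1 \cdots P_s$ and handling each irreducible component separately (incidences between a point on $Z(P_i)$ and a curve contained in $Z(P_j)$ with $i \neq j$ are controlled using the fact that $Z(P_i) \cap Z(P_j)$ is 1-dimensional of bounded degree) and invoking Lemma \ref{irreducibleIdealLem}. Then apply Theorem \ref{partitioningOnAVariety} with a polynomial $Q$ of degree $E = \lceil \max(cD, N/n) \rceil$. This gives $O_{c,D}(DE^2)$ open cells on the $2$-dimensional surface $Z(P)$, each containing $O_{c,D}(m/(DE^2))$ points of $\pts$.

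Next I would split the incidences into three groups: (a) the point $p$ lies in some cell; (b) $p \in Z(P)\cap Z(Q)$ and $C \not\subset Z(Q)$; (c) $C \subset Z(Q)$. For (a), within each cell $\omega$ I would apply the classical planar point--pseudo-segment incidence bound (transferred to the 2-dimensional cell via a generic projection to $\RR^2$, which preserves the pseudo-segment property provided one further subdivides each curve into the $O_t(1)$ pieces induced by the self-intersections of its projection). This gives $I_\omega \leq C(m_\omega^{2/3}N_\omega^{2/3}+m_\omega+N_\omega)$, where $N_\omega$ is the number of pseudo-segment pieces meeting $\omega$. Using $m_\omega = O(m/(DE^2))$, $\sum_\omega m_\omega \leq m$, and $\sum_\omega N_\omega \leq N + O_t(nE)$ (since each curve not in $Z(Q)$ crosses at most $tE$ cell boundaries by Bezout), Hölder's inequality yields
\[
\sum_\omega m_\omega^{2/3}N_\omega^{2/3} \lesssim \left(\frac{m}{DE^2}\right)^{2/3}(DE^2)^{1/3}(N+nE)^{2/3} = \frac{m^{2/3}(N+nE)^{2/3}}{(DE^2)^{1/3}}.
\]
The choice $E = \lceil \max(cD, N/n)\rceil$ ensures $nE = \Theta_D(N)$ (using $N \geq n$), so this right-hand side is $O_D(m^{2/3}n^{2/3})$.

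For group (b), Bezout's theorem gives $|C \cap Z(Q)| \leq tE$ for each $C \not\subset Z(Q)$, so the total contribution is $O_t(nE) = O_D(N)$. For group (c), the curves of $\mathcal{C}$ contained in the 1-dimensional variety $Z(P) \cap Z(Q)$ are irreducible components of that variety; since the variety has total degree $O(DE)$, there are at most $O_t(DE)$ such curves, any two of which share at most $t^2$ points. Separating the points into those lying on exactly one such component (contributing at most $m$ incidences) and the $O_t((DE)^2)$ points lying on two or more components (contributing at most $O_t((DE)^3)$ incidences), together with Kővári--Sós--Turán, absorbs this into $O_{t,D}(m+N)$ after optimizing $E$. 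The main obstacle I anticipate is precisely this last step: controlling the degenerate contribution of curves that lie inside the partitioning surface $Z(Q)$ uniformly in all parameter regimes. The plane-curve treatment in \cite[Lemma 3.1]{SZ} handles this with careful bookkeeping, and the argument must be re-verified in the space-curve setting where the ambient surface $Z(P)$ has dimension two rather than one.
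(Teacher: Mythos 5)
Your plan differs from the paper's in an important way and contains a gap in the step that carries the whole argument. The paper does not use polynomial partitioning on $Z(P)$ at all: it simply removes the points on the singular locus (which contribute $O_{D,t}(m+n)$ incidences), applies a rotation so that $\pi(x_1,x_2,x_3)=(x_1,x_2)$ is injective on each of the $O_D(1)$ connected components $W_1,\dots,W_s$ of $Z(P)\setminus Z(\partial_{x_3}P)$, projects the points and curve arcs on each sheet $W_i$ to the plane, and applies \cite[Lemma 3.1]{SZ} on each sheet. Since $\pi|_{W_i}$ is a bijection onto its image, the pseudo-segment structure is preserved sheet by sheet, and summing over the $O_D(1)$ sheets immediately gives $O_{D,t}(m^{2/3}n^{2/3}+m+N)$. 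No partitioning polynomial $Q$ is needed, and there are no degenerate curves in a partitioning hypersurface to worry about.

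The gap in your argument is in the parenthetical: you claim that a generic projection ``preserves the pseudo-segment property provided one further subdivides each curve into the $O_t(1)$ pieces induced by the self-intersections of its projection.'' This is not enough. Cutting a curve $C$ at the preimages of the singular points of $\pi(C)$ only ensures that each resulting arc of $C$ projects \emph{injectively}; it does nothing to control intersections between the projections of arcs coming from \emph{different} curves, or even different arcs of the same curve. Two arcs $\gamma_1\subset C_1$, $\gamma_2\subset C_2$ on opposite sheets of $Z(P)$ may be disjoint in $\RR^3$ (hence pseudo-segments) while their projections cross each other twice (think of two non-intersecting latitude circles on a sphere, one in each hemisphere). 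Since the cells of your partition are not constrained to lie on a single sheet, this failure can occur inside a cell, and the planar incidence bound you invoke is then inapplicable. The fix is exactly what the paper does: split the incidence count by the sheets $W_i$ and project each sheet separately. But once you do this, the polynomial partitioning step is superfluous, and the complications you flag around curves contained in $Z(Q)$ disappear along with it. (Your accounting of $\sum_\omega N_\omega \lesssim N+ntE$ and the Hölder computation are fine, as is the KST treatment of curves in $Z(P)\cap Z(Q)$, but they solve a harder problem than was posed.)
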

\begin{proof}
Since the proof of Lemma \ref{incidencesOnASurfae} is fairly standard, we will just give a brief sketch. First, note that the points contained in the singular locus of $Z(P)$ can contribute $O_{D,t}(m+n)=O_{D,t}(m+N)$ incidences. Thus it suffices to bound the number of incidences between curves and points not contained in the singular locus of $Z(P)$; to simplify notation we will assume that none of the points in $\pts$ are contained in the singular locus of $Z(P)$. 

After applying a rotation if necessary, we can assume that no points of $\pts$ are contained in $Z(\partial_{x_3}(P))$, and that the image of each curve in $\mathcal{C}$ under the projection $\pi(x_1,x_2,x_3)=(x_1,x_2)$ is an algebraic curve. Let $W_1,\ldots,W_s$, $s=O_{D}(1)$ be the set of connected components of $Z(P)\backslash Z(\partial_{x_3}(P))$. The projection $\pi$ is injective on each set $W_i$. Let $\pts_i=\pi(\pts\cap W_i)$, and let $\Gamma_i=\{\pi(C\cap W_i)\colon C\in\mathcal{C}\}$. Then the curve segments in $\Gamma_i$ can be cut into $N$ pseudo-segments. Applying \cite[Lemma 3.1]{SZ}, we conclude that for each index $i$, there are $O_{t}(m^{2/3}n^{2/3}+N)$ point-curve incidences between the curves in $\mathcal{C}$ and the points in $\pts_i$. The lemma now follows from the observation that there are $O_{D}$ indices in total.
\end{proof}

We can now begin the proof of Lemma \ref{pointPseudosegmentIncidencesLem}.

\begin{proof}[\textbf{Proof of Lemma \ref{pointPseudosegmentIncidencesLem}}]
We will prove that if $A$ is sufficiently large, depending only on $t$, then the number of point-curve incidences is at most
$$
A\big(m^{1/2}n^{3/4}+m^{2/3}n^{1/3}B^{1/3} + m + N\big).
$$

First, we can assume $m\leq n^{3/2}$, since otherwise the result follows from \cite[Theorem 1.2]{GZ}. We will prove the result by induction on $n$. However, we are doing this merely for convenience to simplify the discussion of a few non-critical terms. In particular, our use of induction does not introduce an $\eps$ loss in the exponent\footnote{However, since we introduce an $\eps$ loss in the exponent elsewhere in the argument, doing so here as well would not affect the final theorem.}. 

Recall that the curves in $\mathcal{C}$ can be cut into $N$ pseudo-segments. For each $C\in\mathcal{C}$, let $N(C)$ be the number of Jordan arcs in this cutting that are contained in $C$. In particular, $N=\sum_{C\in\mathcal{C}}N(C)$. If $\mathcal{C}^\prime\subset\mathcal{C}$, define $N(\mathcal{C}^\prime)=\sum_{C\in\mathcal{C}^\prime}N(C)$.

Let 
$$
D=c m^{1/2}n^{-1/4},
$$
where $c>0$ is a small constant to be determined later. Note that since $m\leq n^{3/2}$, we have $D\leq cn^{1/2}$.

Use Theorem \ref{projectedPartitioning} (with $d=3,\ e=0,\ f=3$) to find a polynomial $P$ of degree at most $D$ that partitions $\RR^3$ into $O(D^3)$ cells, each of which contains $O(mD^{-3})$ points from $\pts$.

For each cell $\omega$, let $m_\omega$ be the number of points from $\pts$ contained in $\omega$, and let $n_\omega$ be the number of curves that intersect $\omega$. We have 
$$
\sum_\omega m_\omega = |\pts\backslash Z(P)|
$$ 
and 
\begin{equation}
\sum_{\omega}n_\omega\lesssim_t D|\mathcal{C}_0| + N(\mathcal{C}_0),
\end{equation}
where $\mathcal{C}_0\subset\mathcal{C}$ is the set of curves that are not contained in $Z(P)$.

The incidence contribution from the cell interiors is
\begin{equation*}\label{inCellIncidences}
\begin{split}
I(\pts\backslash Z(P),\ \mathcal{C}) &\lesssim  \sum_{\omega} m_\omega n_\omega^{1/2}+ n_\omega \\
&\leq \Big(\sum_{\omega} m_\omega^2\Big)^{1/2} \Big(\sum_{\omega} n_\omega\Big)^{1/2}+ \sum_{\omega}n_\omega \\
& \lesssim_t m|\mathcal{C}_0|^{1/2}D^{-1}+m N(\mathcal{C}_0)^{1/2} D^{-3/2}+D|\mathcal{C}_0|+N(\mathcal{C}_0)\\
&\lesssim_t m^{1/2}n^{3/4}+m^{1/4}n^{3/8}N(\mathcal{C}_0)^{1/2}  +N(\mathcal{C}_0)\\
&\lesssim_t m^{1/2}n^{3/4} +N(\mathcal{C}_0),
\end{split}
\end{equation*}
where on the second line we used the Cauchy-Schwarz inequality and on the final line we used the AM-GM inequality.
Thus 
\begin{equation}\label{inCellIncidences}
I(\pts\backslash Z(P),\mathcal{C})\leq \frac{A}{3}\big(m^{1/2}n^{3/4}+N(\mathcal{C}_0)\big),
\end{equation}
provided $A$ is chosen sufficiently large (depending only on $t$).

Factor $P$ into its irreducible components. Let 
$$
\mathcal{C}_1 = \{C\in\mathcal{C} \colon C\subset Z(P)_{\operatorname{sing}}\}.
$$
We have $|\mathcal{C}_1|\leq D^2\leq cn$. We will choose $c$ sufficiently small so that $|\mathcal{C}_1|\leq n/200$. For each irreducible component $Q$ of $P$, let 
$$
\mathcal{C}_{Q}=\{C\in\mathcal{C}\backslash\mathcal{C}_1\colon C\subset Z(Q)\}.
$$ 
Note that the sets $\{\mathcal{C}_{Q}\}$ are disjoint. For each irreducible component $Q$ of $P$, define
$$
\pts_{Q}=\{p\in\pts\cap Z(Q) \backslash Z(P)_{\operatorname{sing}}\}.
$$
Again, the sets $\{\pts_{Q}\}$ are disjoint.  Define
$$
\pts_{Q}^\prime=\pts_{Q}\cap\pts_2(\mathcal{C}_{Q})
$$
Note that 
$$
\sum_{Q}I(\pts_{Q}\backslash\pts_{Q}^\prime,\mathcal{C}_{Q})\leq |\pts\cap Z(P)|.
$$

Define 
$$
\mathcal{C}_{Q}^\prime=\{C\in\mathcal{C}_{Q}\colon C\cap\pts_Q^\prime\geq A_1D\}, 
$$
where $A_1$ is a constant to be determined later. Note that
$$
\sum_{Q}I(\pts_Q,\mathcal{C}_{Q}\backslash\mathcal{C}_Q^\prime )\leq A_1Dn.
$$

Define

\begin{equation*}
\begin{split}
\mathcal{Q}_1&=\{Q\ \textrm{an irreducible component of}\ P,\ |\mathcal{C}_Q^\prime| < A_2(\deg Q)^2\}\\
\mathcal{Q}_2&=\{Q\ \textrm{an irreducible component of}\ P,\ |\mathcal{C}_Q^\prime|\geq A_2(\deg Q)^2\},
\end{split}
\end{equation*}
where $A_2$ is a constant that will be chosen later. It will depend only on $t$.

Define
$$
\mathcal{C}_2=\bigcup_{Q\in\mathcal{Q}_1}\mathcal{C}_Q^\prime.
$$
We have
$$
|\mathcal{C}_2|\leq\sum_{Q\in\mathcal{Q}_1}A_2 (\deg Q)^2 \leq A_2 (\deg P)^2 \leq A_2D^2\leq cA_2n.
$$

If the constant $c$ is chosen sufficiently small (compared to $A_2$), then $|\mathcal{C}_2|\leq n/200.$  Furthermore, 
$$
I(\pts,\mathcal{C}_1\cup\mathcal{C}_2) = I\big(\pts_1,\mathcal{C}_1\cup\mathcal{C}_2\big),
$$
where 
$$
\pts_1=\pts\cap\Big(Z(P)_{\operatorname{sing}}\cup\bigcup_{Q\in\mathcal{Q}_1}Z(Q)\Big).
$$

Apply the induction hypothesis (with the same value of $B$) to $\pts_1$ and $\mathcal{C}_1\cup\mathcal{C}_2$ and conclude that
\begin{equation}\label{Q1Incidences}
\begin{split}
I(\pts,\mathcal{C}_1\cup\mathcal{C}_2)&\leq A \Big(|\pts_1|^{1/2}(n/100)^{3/4}+|\pts_1|^{2/3}(n/100)^{1/3}B^{1/3} + |\pts_1| + N(\mathcal{C}_1\cup\mathcal{C}_2)\Big),\\
&\leq A\Big(\frac{1}{10}(m^{1/2}n^{3/4}+m^{2/3}n^{1/3}B^{1/3})+|\pts_1|+N(\mathcal{C}_1\cup\mathcal{C}_2)\Big).
\end{split}
\end{equation}

By Lemma \ref{manyCurvesInSurface}, if $A_1$ is chosen sufficiently large then for each $Q\in\mathcal{Q}_2$, $Z(Q)$ is doubly-ruled by curves of degree $\leq t$. In particular, $\deg(Q)\leq100t^2$. 

We can now analyze each of these surfaces separately. Define $\mathcal{C}_3=\mathcal{C}\backslash(\mathcal{C}_0\cup\mathcal{C}_1\cup\mathcal{C}_2)$. By Lemma \ref{incidencesOnASurfae}, we have 
\begin{equation*}
\begin{split}
I(\pts,\mathcal{C}_3)&\lesssim \sum_{Q\in\mathcal{Q}_2} \Big(|\pts_{Q}^\prime|^{2/3}|\mathcal{C}_{Q}^\prime|^{2/3} + |\pts_Q^\prime| + N(\mathcal{C}_Q^\prime)\Big)\\
&\lesssim \Big(\sum_{Q\in\mathcal{Q}_2} |\pts_{Q}^\prime|\Big(\sum_{Q\in\mathcal{Q}_2}|\mathcal{C}_{Q}^\prime|^2\Big)^{1/3} + \sum_{Q\in\mathcal{Q}_2} |\pts_Q^\prime| + \sum_{Q\in\mathcal{Q}_2} N(\mathcal{C}_Q^\prime)\\
&\lesssim m^{2/3}(nB)^{1/3}+ \sum_{Q\in\mathcal{Q}_2} |\pts_Q| +N(\mathcal{C}_3).
\end{split}
\end{equation*}

Thus if $A$ is chosen sufficiently  large (depending only on $t$), then
\begin{equation}\label{Q2Incidences}
I(\pts ,\mathcal{C}_3)\leq \frac{A}{3} \Big(m_3^{2/3}n^{1/3}B^{1/3}+\Big|\pts\cap \bigcup_{Q\in\mathcal{Q}_2}Z(Q) \backslash Z(P)_{\operatorname{sing}}\Big|+N(\mathcal{C}_3)\Big).
\end{equation}

Combining \eqref{inCellIncidences}, \eqref{Q1Incidences}, and \eqref{Q2Incidences}, we obtain
\begin{equation}
\begin{split}
I(\pts,\Gamma)&\leq A\Big(m^{1/4}n^{3/4}+m^{2/3}n^{1/3}B^{1/3} + \big|\pts\cap\big(Z(P)_{\operatorname{sing}}\cup\bigcup_{Q\in\mathcal{Q}_1} Z(Q)\big)\big|\\
&\qquad+\big|\pts\backslash Z(P)_{\operatorname{sing}}\cap\bigcup_{Q\in\mathcal{Q}_2}Z(Q)\big| + N(\mathcal{C}_0)+N(\mathcal{C}_1\cup\mathcal{C}_2)+N(\mathcal{C}_3)\Big)\\
&\leq A\big(m^{1/2}n^{3/4}+m^{2/3}n^{1/3}B^{1/3}+m+N\big).\qedhere
\end{split}
\end{equation}
\end{proof}
Though we will not need it here, we remark that  Lemma \ref{pointPseudosegmentIncidencesLem} plus \cite[Theorem 1.2]{GZ} yields the following corollary. 
\begin{cor}
Let $\mathcal{C}$ be a set of $n$ space curves in $\RR^3$, each of degree at most $t$. Suppose that the curves in $\mathcal{C}$ can be cut into $N$ pseudo-segments. Suppose furthermore that at most $B$ curves from $\mathcal{C}$ can be contained in any surface of degree $\leq100 t^2$. Then the number of $k$-rich points is
$$
O_t(n^{3/2}k^{-2} + nBk^{-3}+Nk^{-1}).
$$
\end{cor}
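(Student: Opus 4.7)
The plan is to deduce the $k$-rich point bound from Lemma \ref{pointPseudosegmentIncidencesLem} by the standard pigeonhole argument. Let $\pts_k\subset\RR^3$ denote the set of points incident to at least $k$ curves from $\mathcal{C}$, and set $m=|\pts_k|$. By the definition of a $k$-rich point,
$$
km \;\leq\; I(\pts_k,\mathcal{C}).
$$
The pair $(\pts_k,\mathcal{C})$ still satisfies the hypotheses of Lemma \ref{pointPseudosegmentIncidencesLem} with the same values of $n$, $N$, and $B$, so
$$
km \;\leq\; A_t\bigl(m^{1/2}n^{3/4} + m^{2/3}n^{1/3}B^{1/3} + m + N\bigr),
$$
where $A_t$ is the implicit constant from that lemma.

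First I would dispose of the constant-term nuisance: assume $k\geq 2A_t$, so the linear term $A_t m$ on the right may be absorbed into the left, giving $km/2\leq A_t(m^{1/2}n^{3/4} + m^{2/3}n^{1/3}B^{1/3} + N)$. Now I would split into three cases according to which of the three remaining terms is the largest. If the first dominates, then $km\lesssim_t m^{1/2}n^{3/4}$, which rearranges to $m\lesssim_t n^{3/2}k^{-2}$. If the second dominates, then $km\lesssim_t m^{2/3}n^{1/3}B^{1/3}$, which rearranges to $m\lesssim_t nBk^{-3}$. If the third dominates, then $km\lesssim_t N$, which rearranges to $m\lesssim_t Nk^{-1}$. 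In every case $m$ is bounded by the maximum of the three quantities in the claim, and this in turn is bounded by their sum, which yields the corollary for $k\geq 2A_t$.

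Finally I would handle the range $2\leq k<2A_t$, where $k$ is bounded by a constant depending only on $t$. In this regime the stated bound is of order $n^{3/2}+nB+N$ up to a $t$-dependent constant, and to reach it we may apply \cite[Theorem 1.2]{GZ} exactly as was done at the start of the proof of Lemma \ref{pointPseudosegmentIncidencesLem}: that theorem provides a bound on the number of $k$-rich points in the regime $m>n^{3/2}$ that is compatible with the $n^{3/2}k^{-2}+nBk^{-3}$ part of the claimed estimate, and for $m\leq n^{3/2}$ there is nothing to prove. Thus after choosing the implicit constant $O_t(1)$ large enough, the bound holds for all $k\geq 2$, completing the proof. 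The main point worth double-checking is that the hypotheses of Lemma \ref{pointPseudosegmentIncidencesLem} genuinely transfer unchanged when we restrict from the full point set to the subset $\pts_k$; this is automatic because $N$ and $B$ depend only on $\mathcal{C}$, and dropping points can only decrease the incidence count on the right.
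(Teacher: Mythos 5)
Your proof is correct and follows essentially the same route as the paper: pigeonhole from Lemma \ref{pointPseudosegmentIncidencesLem} together with the a priori bound $m\leq n^{3/2}$ from \cite[Theorem 1.2]{GZ}. The only cosmetic difference is in how the linear $O_t(m)$ term is disposed of---the paper uses $m\leq n^{3/2}$ to write $m=m^{1/2}m^{1/2}\leq m^{1/2}n^{3/4}$ and absorb it into the first term uniformly in $k$, while you absorb it into $km$ for $k\geq 2A_t$ and handle the bounded-$k$ range separately---but both resolutions are standard and equally valid.
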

\begin{proof}
Suppose that there are $m$ $k$--rich points. By \cite[Theorem 1.2]{GZ}, we have $m\leq n^{3/2}$, so in particular, $m=m^{1/2}m^{1/2}\leq m^{1/2}n^{3/4}$. By Lemma \ref{pointPseudosegmentIncidencesLem}, we have
 
$$
mk =O_t\big(m^{1/2}n^{3/4}+m^{2/3}n^{1/3}B^{1/3} + m + N\big) =O_t \big(2m^{1/2}n^{3/4}+m^{2/3}n^{1/3}B^{1/3} + N\big),
$$
which can be re-arranged to yield
$$
m = O_t(n^{3/2}k^{-2} + nBk^{-3}+Nk^{-1}).\qedhere
$$
\end{proof}
\section{Finding structure amongst sets of circles}

In this section we will prove the following result.
\begin{lem}\label{circlePartitioningLem}
Let $0\leq\alpha< 1/2$. For each $\eps>0$, there is a constant $A$ so that the following is true. Let $\pts\subset\RR^3$ be a set of $m$ points and let $\circles$ be a set of $n>4^{1/(1-2\alpha)}$ circles in $\RR^3$, no two of which are co-planar\footnote{see Remark \ref{coplanarRemark}}. %Suppose that each circle is incident to roughly the same number of points, i.e.~for each pair of circles $C,C^\prime\in\mathcal{C}$, we have $|\pts\cap C|\leq 2|\pts\cap C^\prime|$

Then for each $E\geq 1$, at least one of the following three things must hold:\\
\begin{itemize}
\item[(A)] (The circles are contained in a small number of spheres): There is a set $\mathcal{C}_{(A)}\subset \mathcal{C}$ of size at least $n/\log n$, and a set $\mathcal{S}$ of spheres with $|\mathcal{S}|\leq n^\alpha$. For each $S\in\mathcal{S}$, there is a set $\circles(S)\subset\circles_{(A)}$ of circles contained in $S$ so that the sets $\{\circles(S)\}_{S\in\mathcal{S}}$ are disjoint and $|\circles(S)|\leq 2n|\mathcal{S}|^{-1}$ for each $S\in\mathcal{S}$. Finally,
\begin{equation}\label{incidencesSumOverSpheres}
I(\pts,\circles_{(A)})=  \sum_{S\in\mathcal{S}}I(\pts, \circles(S)).
\end{equation}

\item[(B)] (The circles are contained in an intermediate number of spheres): There is a set $\mathcal{C}_{(B)}\subset \mathcal{C}$ of size at least 
\begin{equation}\label{sizeOfCB}
|\mathcal{C}_{(B)}|\geq E^{-\eps}n-E^{6+2\eps},
\end{equation}
a number $1\leq F<E$, and a set $\Omega$ of at most $AF^{6+\eps}$ ``cells.'' For each cell $\omega\in\Omega$, there is a set $\pts_\omega\subset\pts$ of points, a set $\circles_\omega\subset\circles_{(B)}$ of circles with $|\circles_\omega|\leq AF^{-6+\eps}n$, and a set $\mathcal{S}_\omega$ of spheres with $|\mathcal{S}_\omega|\leq |\circles_\omega|^{\alpha}$. For each $S\in\mathcal{S}_\omega$, there is a set $\circles(S)\subset\circles_\omega$ of circles contained in $S$.

The sets $\{\circles_\omega\}_{\omega\in\Omega}$ are disjoint and for each $\omega\in\Omega$, the sets $\{\circles(S)\}_{S\in\mathcal{S}_\omega}$ are disjoint. The sets $\{\pts_\omega\}_{\omega\in\Omega}$ are not disjoint, but 
$$
\sum_{\omega\in\Omega} |\pts_\omega|\leq A F^{4+\eps}m.
$$ 
For each $\omega\in\Omega$ and each $S\in \mathcal{S}_\omega,$ we have 
$
|\circles(S)|\leq A |\circles_\omega|^{1-\alpha}.
$
Finally,
\begin{equation}\label{incidencesSumOverCellsSpheres}
I(\pts,\circles_{(B)})\leq \sum_{\omega\in\Omega}\sum_{S\in\mathcal{S}_\omega}I(\pts_\omega,\circles(S)) + AE^{\eps}n.
\end{equation}

\item[(C)] (The circles strongly avoid being contained in spheres): There is a set $\mathcal{C}_{(C)}\subset \mathcal{C}$ of size at least $E^{-\eps}n-AE^{6+2\eps}$, and a set $\Omega$ of at most $AE^{6+\eps}$ ``cells.'' For each cell $\omega\in\Omega$ there is a set $\pts_\omega$ of points and a set $\circles_\omega$ of circles. The sets $\{\circles_\omega\}$ are disjoint, and $|\circles_\omega|\leq AE^{-6+\eps}n$ for each $\omega\in\Omega$. The sets $\{\pts_\omega\}$ are not disjoint, but $\sum_\Omega |\pts_\omega|\leq AE^{4+\eps}m$. For each $\omega\in \Omega$, at most $|\circles_{\omega}|^{1-\alpha}$ circles from $\circles_\omega$ can be contained in a common sphere. We also have

\begin{equation}\label{incidencesSumNoSpheres}
I(\pts,\circles)\leq \sum_{\omega\in\Omega}I(\pts_\omega,\circles_\omega)+ AE^{\eps}n.
\end{equation}
\end{itemize}
\end{lem}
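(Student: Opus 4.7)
I plan to combine a greedy sphere-extraction procedure (which supplies case (A)) with a polynomial partition followed by a per-cell repetition of the greedy extraction (which supplies case (B) or (C), depending on which side of a dichotomy the configuration falls). For the greedy step, set $\circles_0=\circles$ and at step $i\geq 1$ pick a sphere $S_i$ containing the maximum number $N_i$ of circles from $\circles_{i-1}$, removing those circles to form $\circles_i$; halt once $N_i<n^{1-\alpha}$. The removed sets are pairwise disjoint and each has size $\geq n^{1-\alpha}$, so the process terminates after $K\leq n^{\alpha}$ steps. If the total extracted mass is at least $n/2$, I dyadically pigeonhole $N_i\in[n^{1-\alpha},n]$ into $O(\log n)$ bands; a heaviest band yields a sphere set $\spheres$ of size $\leq n^{\alpha}$ satisfying $|\spheres|\cdot 2^{k+1}\gtrsim n/\log n$ and $|\spheres|\cdot 2^k\leq n$, hence $|\circles(S)|\leq 2n/|\spheres|$ and $|\circles_{(A)}|\geq n/\log n$. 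The disjointness of the $\{\circles(S)\}$ gives the exact incidence identity \eqref{incidencesSumOverSpheres}, so we are in case (A).

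Otherwise a set $\circles^\star\subset\circles$ of size $\geq n/2$ survives the global extraction, and no sphere holds more than $n^{1-\alpha}$ of its members. For a scale $F\geq 1$ I invoke Theorem \ref{projectedPartitioning}, tuning $(d,e,f)$ (likely in the $\RR^4$ lift via $\beta$, so that points become vertical lines and a vertical partition controls point replication), to produce a cell decomposition $\Omega_F$ with $|\Omega_F|\leq AF^{6+\eps}$, per-cell circle count $|\circles_\omega|\leq AF^{-6+\eps}n$, and replicated point sum $\sum_{\omega}|\pts_\omega|\leq AF^{4+\eps}m$; the circles lying on the partitioning variety itself are dumped into a bad bucket of size $O(F^{6+2\eps})$ that feeds the additive error terms in the conclusion. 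Within each cell I rerun the greedy extraction on $\circles_\omega$ with threshold $|\circles_\omega|^{1-\alpha}$, producing a family $\spheres_\omega$ of size $\leq|\circles_\omega|^{\alpha}$, pairwise-disjoint assigned sets $\{\circles(S)\}_{S\in\spheres_\omega}$ with $|\circles(S)|\leq A|\circles_\omega|^{1-\alpha}$, and a residue $\R_\omega\subset\circles_\omega$ whose members lie on no sphere carrying more than $|\circles_\omega|^{1-\alpha}$ members of $\circles_\omega$.

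Finally, I first test scale $F=E$: if $\sum_\omega|\R_\omega|\geq E^{-\eps}n-AE^{6+2\eps}$, I set $\circles_{(C)}=\bigsqcup_\omega\R_\omega$ and use the per-cell incidence decomposition, absorbing the partitioning-variety loss and the point-duplication across the boundary into the additive $AE^{\eps}n$ slack of \eqref{incidencesSumNoSpheres}; this is case (C). Otherwise the bulk of $\circles^\star$ is sphere-captured at scale $E$ within cells, and I walk the scale downward through the dyadic sequence $F\in\{E/2,E/4,\ldots,2,1\}$, repeating the partition+per-cell extraction at each scale. Because case (A) has been excluded, the captured mass cannot dominate at $F=1$, so by pigeonholing over the $O(\log E)$ dyadic scales there exists some $1\leq F<E$ at which the sphere-captured mass across cells is $\geq E^{-\eps}n$; taking $\circles_{(B)}$ to be the union of the $\circles(S)$ at that scale and combining with the bullets of Step~2 yields all the bounds in case (B), with the additive $AE^{\eps}n$ term in \eqref{incidencesSumOverCellsSpheres} absorbing the pigeonhole log loss and the residue.

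The main obstacle is the parameter-matching in the partition: simultaneously achieving $|\Omega|\leq F^{6+\eps}$, per-cell circle bound $F^{-6+\eps}n$, and replicated point sum $F^{4+\eps}m$ appears to require either the $\RR^4$ lift via $\beta$ plus a vertical partitioning, or an iterated/two-level partition, together with a Markov-style trimming step to promote average circle-per-cell bounds to maximum ones. The second delicate point is that cases (A) and (C) demand an exact incidence identity/inequality rather than merely a bound, so every circle must be classified once and every point-circle incidence must be captured; this forces the disjointness of the $\{\circles(S)\}$ within each cell and requires all boundary effects and point duplication to be funneled into the additive $O(E^{6+2\eps})+O(E^{\eps}n)$ slack, which is the bookkeeping heart of the argument.
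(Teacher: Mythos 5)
Your proposal correctly reproduces the architecture of the argument: a greedy sphere-extraction to handle Case (A), followed by a partition of the circle parameter space, per-cell sphere-extraction to produce the sub-families $\mathcal{C}(S)$, and a stopping-time dichotomy that decides between Cases (B) and (C). The Case (A) treatment is sound (your greedy version is a clean alternative to the paper's inclusion--exclusion bound $|\mathcal{S}_0|<2n^{\alpha}$), and the dyadic pigeonholing inside Case (A) matches the paper.

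The genuine gap is exactly where you flag it: the simultaneous exponent triple $(|\Omega|,\ |\circles_\omega|,\ \sum_\omega|\pts_\omega|)\approx(F^{6+\eps},\ nF^{-6+\eps},\ mF^{4+\eps})$. Your candidate mechanism---lifting to $\RR^4$ via $\beta$---cannot supply it. The $\beta$-lift encodes a circle's slope for the pseudo-segment cutting argument of Section~3; it offers no object in $\RR^4$ that represents ``the set of circles through a given point,'' so there is no way to control point replication through that route. What the paper actually does is dualize to $\RR^6$: a circle of radius $<1$ is the pair $(p,p')$ of centers of the two unit spheres containing it, and the set of circles through a point $q$ becomes the \emph{double-sphere} $Z_q=S_q\times S_q\subset\RR^6$, a $4$-dimensional variety. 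The exponent $4+\eps$ is not the dimension count alone; it comes from Lemma~\ref{twoThirdsDimensionLem} (built on Lemmas~\ref{fiveDimVariety}--\ref{twoDimVariety}), which shows that on each stratum $W$ arising in the multilevel partition the intersection $W\cap Z_q$ has dimension at most $\frac{2}{3}\dim W$ away from a negligible set, so that Barone--Basu gives $O(D^4)$ cells per double-sphere rather than $O(D^5)$. That ``$2/3$-dimensional'' phenomenon, made uniform across strata in Lemma~\ref{partitionCirclesLem}, is the technical heart, and without it the $mF^{4+\eps}$ replication bound---and hence the whole of Cases (B) and (C)---does not exist.

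A secondary, smaller issue: your dichotomy walks downward from $F=E$ to $F=1$ and pigeonholes for a good intermediate scale, but each dyadic step is presented as an \emph{independent} fresh partition plus extraction, and it is not clear this forces the captured mass to be large at some $F<E$ merely from its being large at $F=E$ and small at $F=1$. The paper instead iterates a nested refinement (constant factor $G$ per step, circles captured by rich spheres are stripped before the next refinement) so that the captured fraction at each stage is compared against a constant threshold, and the stopping-time logic is what produces the $1\le F<E$ in Case (B) and the scale-$E$ residue in Case (C). This is fixable bookkeeping once the partitioning lemma is in hand, but as written the pigeonholing step is not airtight.
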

\begin{rem}\label{coplanarRemark}
The requirement that no two circles are co-planar is harmless, since we can always ensure that this holds by applying an inversion around a suitable point in $\RR^3$. Equivalently, we could remove this requirement and replace every reference to ``spheres'' with ``spheres and planes.''
\end{rem}
The proof of this lemma is a type of ``stopping time'' argument. We begin by checking whether the circles in $\circles$ cluster into a small number of spheres. If so, we are in Case (A). If not, we examine the points and four-dimensional varieties in $\RR^6$ that are dual to the circles from $\circles$ and points from $\pts$. We partition these points and varieties into a small number of cells, and examine the behavior inside these cells. If the circles inside most cells now cluster into a small number of spheres, then we are in Case (B). If not, we iteratively continue this partitioning process inside each cell. If at some point the circles inside most cells cluster into a small number of spheres then we are in Case (B). If we succeed in partitioning the circles into $E^6$ cells without Case (B) occurring, then we are in Case (C). 

Before beginning the process described above, we must develop a partitioning theorem for the points and varieties in $\RR^6$ that are dual to circles and points in $\RR^3$. The main technical difficulty is that circles in $\RR^3$ correspond to points in $\RR^6$, while points in $\RR^3$ correspond to four-dimensional varieties in $\RR^6$. These four-dimensional varieties are of a special type, and the details of how these varieties can intersect will be exploited to yield a stronger partitioning result than is possible for more general classes of four-dimensional varieties.

\subsection{The dual space of points and circles}\label{dualSpacePtsCirclesSec}
For each $p\in\RR^3$, let $S_p=\{q\in\RR^3\colon |p-q|=1\}$; this is the unit sphere centered at $p$. We will identify the parameter space of circles in $\RR^3$ of radius $0<r<1$ with a subset of $\RR^6$ in the following manner: identify each point $(p,p^\prime)\in\RR^3\times\RR^3$ with the circle $C_{pp^\prime}=S_p\cap S_{p^\prime}$. If $0<|p-p^\prime|<1$ then this is a circle. If $|p-p^\prime|=1$ then this is a point, while if $|p-p^\prime|>1$ then this set is empty. For each $p\in\RR^3$, define $Z_p = S_p\times S_p\subset\RR^6$.  We will call sets of the form $S_p\times S_p$ ``double-spheres.''

We will make use of the following duality: If $C_{p,p^\prime}\subset\RR^3$ is a circle and if $q\in C_{p,p^\prime}$, then $(p,p^\prime)\in Z_q$. The following lemma records the fact that at most two unit spheres can be mutually tangent at a common point.
\begin{lem}\label{vectorSpaceSumLem}
Let $q_1,q_2,q_3\in\RR^3$ and let $w\in\RR^6$. Suppose that $w\in Z_{q_i},\ i=1,2,3.$ Then $T_wZ_{q_1}+T_wZ_{q_2}+T_wZ_{q_3}=\RR^6$.
\end{lem}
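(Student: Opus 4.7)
Write $w = (p, p') \in \RR^3 \times \RR^3$, and for $i=1,2,3$ set $a_i = p - q_i$ and $b_i = p' - q_i$. The condition $w \in Z_{q_i} = S_{q_i} \times S_{q_i}$ becomes $|a_i| = |b_i| = 1$. Since the tangent space to a unit sphere at one of its points is the hyperplane orthogonal to the radial direction, the tangent space splits as a product:
\[
T_w Z_{q_i} \;=\; a_i^{\perp} \times b_i^{\perp} \;\subset\; \RR^3 \oplus \RR^3.
\]
Using the identity $(U \times V)^{\perp} = U^{\perp} \times V^{\perp}$ in the orthogonal direct sum $\RR^6 = \RR^3 \oplus \RR^3$, the orthogonal complement is the 2-plane $(T_w Z_{q_i})^{\perp} = \RR a_i \times \RR b_i$. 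Hence showing $\sum_{i=1}^3 T_w Z_{q_i} = \RR^6$ is equivalent to showing
\[
\bigcap_{i=1}^3 \bigl(\RR a_i \times \RR b_i\bigr) \;=\; \{0\}.
\]

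A vector $(\xi, \eta)$ lies in this intersection iff $\xi \in \bigcap_i \RR a_i$ and $\eta \in \bigcap_i \RR b_i$, so it suffices to show each of these one-dimensional intersections in $\RR^3$ is trivial. The key geometric input---the quantitative content of the remark preceding the lemma that at most two unit spheres can be mutually tangent at a common point---is that three distinct unit vectors cannot all be mutually parallel. Indeed, mutually parallel unit vectors lie in the two-element set $\{+u, -u\}$, whereas the map $q_i \mapsto a_i = p - q_i$ is a bijection, so the (implicit) distinctness of $q_1, q_2, q_3$ yields three distinct unit vectors $a_i$ (and likewise three distinct $b_i$). Consequently some pair $a_i, a_j$ is non-parallel, giving $\RR a_i \cap \RR a_j = \{0\}$, whence $\bigcap_i \RR a_i = \{0\}$; the same argument yields $\bigcap_i \RR b_i = \{0\}$, and combining gives the desired triviality.

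There is really no hard step: the whole lemma is a repackaging of the elementary fact that three distinct points on a unit sphere cannot be collinear with its center. The only mildly subtle point is the implicit distinctness hypothesis on the $q_i$, which is consistent with the motivating remark about tangent spheres (and without which the conclusion fails trivially, since coincident $q_i$ give a single $4$-dimensional tangent space).
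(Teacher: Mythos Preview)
Your proof is correct and is exactly the argument the paper intends: the paper offers no explicit proof, only the preceding remark that ``at most two unit spheres can be mutually tangent at a common point,'' and your computation unpacks this remark---tangency of $S_{q_i}$ and $S_{q_j}$ at $p$ means $a_i\parallel a_j$, and a line meets the unit sphere in at most two points, so three distinct $q_i$ yield three distinct unit vectors $a_i$ that cannot all be parallel. Your observation that the distinctness of the $q_i$ is an unstated hypothesis is also correct; it is present in every application of the lemma (Lemmas~\ref{fiveDimVariety} and~\ref{threeDimVariety}).
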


Note that if $q_1,q_2\in\RR^3$ are distinct points, then $Z_{q_1}\cap Z_{q_2}=(S_{q_1}\cap S_{q_2})\times (S_{q_1}\cap S_{q_2})$. If this set is non-empty, then it either consists of a single point or is a product of two circles, each embedded in $\RR^3$. 

Furthermore, we have the following.
\begin{lem}\label{evenDim}
Let $q_1,q_2\in\RR^3$ and let $w\in Z_{q_1}\cap Z_{q_2}$. Then $\dim(T_wZ_{q_1}\cap T_wZ_{q_2})$ must be even. 
\end{lem}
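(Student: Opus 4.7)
The plan is to exploit the product structure $Z_{q_i} = S_{q_i} \times S_{q_i}$ to split the intersection of tangent spaces into two independent three-dimensional problems, and then reduce the claim to a simple case analysis on $|q_1 - q_2|$.

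First I would write $w = (p, p') \in \RR^3 \times \RR^3$ and record that, under the identification $\RR^6 = \RR^3 \oplus \RR^3$,
\[
T_w Z_{q_i} = T_p S_{q_i} \oplus T_{p'} S_{q_i}, \qquad i = 1,2.
\]
Since intersection commutes with direct sum when the summands are in complementary subspaces, this gives
\[
T_w Z_{q_1} \cap T_w Z_{q_2} \;=\; \bigl(T_p S_{q_1} \cap T_p S_{q_2}\bigr) \,\oplus\, \bigl(T_{p'} S_{q_1} \cap T_{p'} S_{q_2}\bigr),
\]
so it suffices to show that the dimensions of the two summands on the right add to an even number. Each $T_p S_{q_i}$ is the $2$-plane in $\RR^3$ orthogonal to $p - q_i$, so the intersection of the two is either a line (dimension $1$) or the whole plane (dimension $2$); the latter occurs precisely when $p - q_1$ is parallel to $p - q_2$, i.e.\ when $p$ lies on the line through $q_1$ and $q_2$.

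Next I would do a case analysis based on $|q_1 - q_2|$, using that $p, p' \in S_{q_1} \cap S_{q_2}$. If $q_1 = q_2$ then $S_{q_1} = S_{q_2}$ and both summands have dimension $2$, giving total dimension $4$. If $|q_1-q_2| = 2$ then $S_{q_1} \cap S_{q_2}$ is the single tangency point $p = p' = (q_1+q_2)/2$ which lies on the line through $q_1,q_2$, so again both summands have dimension $2$, total $4$. If $0 < |q_1 - q_2| < 2$ then $S_{q_1} \cap S_{q_2}$ is a circle lying in the perpendicular bisector plane of the segment $q_1 q_2$, and this plane is disjoint from the line through $q_1, q_2$; hence neither $p$ nor $p'$ lies on that line, and both summands have dimension $1$, total $2$. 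The case $|q_1 - q_2| > 2$ is vacuous since then $Z_{q_1} \cap Z_{q_2} = \emptyset$. In every case the total dimension is even, which proves the lemma.

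There is no real obstacle here: the only slightly delicate point is making sure the degenerate configurations ($q_1 = q_2$ and internal/external tangency of $S_{q_1}, S_{q_2}$) are handled, but in each of those the dimensions bump up together by the same amount and parity is preserved.
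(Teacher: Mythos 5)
Your proof is correct in substance; the paper states Lemma~\ref{evenDim} without giving any argument, so there is nothing to compare against. The product-structure decomposition $T_w Z_{q_i}=T_p S_{q_i}\oplus T_{p'}S_{q_i}$ and the resulting reduction to the parity of $\dim(T_pS_{q_1}\cap T_pS_{q_2})+\dim(T_{p'}S_{q_1}\cap T_{p'}S_{q_2})$ is exactly the natural way to do this, and the case analysis on $|q_1-q_2|$ is complete. One sentence is imprecise: the perpendicular bisector plane of the segment $q_1q_2$ is \emph{not} disjoint from the line through $q_1,q_2$ (it meets it at the midpoint). What is true, and what you actually need, is that the circle $S_{q_1}\cap S_{q_2}$ is disjoint from that line, since the circle is centered at the midpoint with strictly positive radius $\sqrt{1-|q_1-q_2|^2/4}$ when $0<|q_1-q_2|<2$. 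With that correction the argument is sound: in every case the two summand dimensions agree, so the total is $2$ or $4$, both even.
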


The next sequence of lemmas will describe how double-spheres can interact with algebraic varieties of various dimension. 
\begin{lem}\label{fiveDimVariety}
Let $W\subset\RR^6$ be a five-dimensional variety and let $w\in W$ be a regular point of $W$ (in particular, this means that in a neighborhood of $w$, $W$ is a smooth five-dimensional manifold). Let $q_1,q_2,q_3\in\RR^3$ be distinct points, and suppose $w\in Z_{q_i},\ i=1,2,3$. Then $\dim(T_w Z_{q_i} \cap T_w W)\leq 3$ for at least one index $i$.
\end{lem}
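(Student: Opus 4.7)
The plan is to apply the standard dimension formula for subspaces together with Lemma \ref{vectorSpaceSumLem}. First I would observe that since $Z_{q_i} = S_{q_i}\times S_{q_i}$ is a smooth $4$-manifold (as a product of two $2$-spheres), we have $\dim T_wZ_{q_i}=4$, and by assumption $\dim T_w W=5$. Both are linear subspaces of $T_w\RR^6\cong \RR^6$, so
\[
\dim(T_wZ_{q_i}\cap T_wW) \;=\; 4+5-\dim(T_wZ_{q_i}+T_wW) \;\in\;\{3,4\},
\]
the upper bound coming from the fact that $T_wZ_{q_i}+T_wW$ sits inside $\RR^6$ (dimension $6$), and the lower bound from $T_wZ_{q_i}\cap T_wW\subset T_wZ_{q_i}$.

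The proof would then proceed by contradiction. Assume $\dim(T_wZ_{q_i}\cap T_wW)=4$ for all three indices $i=1,2,3$. Since $T_wZ_{q_i}$ is itself $4$-dimensional, this equality forces $T_wZ_{q_i}\subset T_wW$ for $i=1,2,3$. Taking the sum, I get
\[
T_wZ_{q_1}+T_wZ_{q_2}+T_wZ_{q_3}\;\subset\; T_wW.
\]
However, Lemma \ref{vectorSpaceSumLem} asserts that the left-hand side equals all of $\RR^6$, while the right-hand side has dimension only $5$, which is the desired contradiction. Hence at least one of the three dimensions must be strictly less than $4$, i.e.\ at most $3$.

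There is no real obstacle here once Lemma \ref{vectorSpaceSumLem} is in hand; the only subtle point is knowing that $w$ is a regular point of $W$ so that $T_wW$ genuinely has dimension $5$ and behaves like a honest linear subspace of $T_w\RR^6$, and knowing that $w\in Z_{q_i}$ is a regular point of each $Z_{q_i}$ so that $\dim T_wZ_{q_i}=4$. The latter holds automatically because $Z_{q_i}$ is a smooth product manifold with no singularities at all. Lemma \ref{evenDim} is not needed for this particular step; the argument is purely linear-algebraic once the tangent-space identifications are set up.
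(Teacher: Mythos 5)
Your proof is correct and takes essentially the same approach as the paper: assume for contradiction that all three tangent-space intersections have dimension four, conclude $T_wZ_{q_i}\subset T_wW$ for each $i$, and invoke Lemma \ref{vectorSpaceSumLem} to obtain a contradiction. The extra dimension-formula remarks you include are fine but unnecessary; the paper's proof goes directly to the containment.
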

\begin{proof}
This is a variant of the tangent space argument from \cite{ST}. Suppose that $\dim(T_w Z_{q_i} \cap T_w W)=4$ for $i=1,2,3$. Then for each index $i$, $T_w Z_{q_i}\subset T_wW$, so the vector-space sum $T_w Z_{q_1}+T_w Z_{q_2}+T_w Z_{q_3}\subset T_wW$. But this contradicts the fact that by Lemma \ref{vectorSpaceSumLem},  $T_w Z_{q_1}+T_w Z_{q_2}+T_w Z_{q_3}=\RR^6$.
\end{proof}

A similar argument establishes the following.
\begin{lem}\label{threeDimVariety}
Let $W\subset\RR^6$ be a three-dimensional variety, and let $w\in W$ be a regular point of $W$. Let $q_1,q_2,q_3\in\RR^3$ be distinct points, and suppose $w\in Z_{q_i},\ i=1,2,3.$ Then $\dim(T_w Z_{q_i} \cap T_w W)\leq 2$ for at least one index $i$.
\end{lem}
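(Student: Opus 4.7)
The plan is to mirror the contradiction argument of Lemma \ref{fiveDimVariety}: suppose for contradiction that $\dim(T_w Z_{q_i}\cap T_w W)\geq 3$ for each $i=1,2,3$. Since $\dim T_w W = 3$ and $T_w Z_{q_i}\cap T_w W\subseteq T_w W$, equality must hold, forcing $T_w W\subseteq T_w Z_{q_i}$ for every $i$, and therefore
$$
T_w W\ \subseteq\ \bigcap_{i=1}^{3} T_w Z_{q_i}.
$$
I will derive a contradiction by showing that the right-hand side has dimension at most $2$.

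To compute the intersection I would exploit the product structure of the double-spheres. Writing $w=(w_1,w_2)\in\RR^3\times\RR^3$, the identity $Z_{q_i}=S_{q_i}\times S_{q_i}$ gives $T_w Z_{q_i}=T_{w_1}S_{q_i}\times T_{w_2}S_{q_i}$ with $T_{w_\alpha}S_{q_i}=(w_\alpha-q_i)^{\perp}\subseteq\RR^3$. Consequently
$$
\bigcap_{i=1}^{3} T_w Z_{q_i}\ =\ V_1\times V_2,\qquad V_\alpha\ :=\ \bigl(\mathrm{span}\{w_\alpha-q_1,\,w_\alpha-q_2,\,w_\alpha-q_3\}\bigr)^{\perp}.
$$
The remaining step is to show $\dim V_\alpha\leq 1$ for each $\alpha\in\{1,2\}$, which yields $\dim(V_1\times V_2)\leq 2<3=\dim T_w W$, contradicting the displayed inclusion.

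For the bound $\dim V_\alpha\leq 1$, suppose instead $\dim V_\alpha\geq 2$. Then the three vectors $w_\alpha-q_1$, $w_\alpha-q_2$, $w_\alpha-q_3$ all lie in a common one-dimensional subspace of $\RR^3$. But each of these vectors has length $1$ (since $w_\alpha\in S_{q_i}$), and any one-dimensional subspace of $\RR^3$ contains only two unit vectors, $\pm v$. Consequently the three points $q_i = w_\alpha - (w_\alpha-q_i)$ all lie in the two-element set $\{w_\alpha+v,\,w_\alpha-v\}$, contradicting the hypothesis that $q_1,q_2,q_3$ are distinct.

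The main obstacle is really just choosing the right formulation. The vector-space-sum identity from Lemma \ref{vectorSpaceSumLem} that drove Lemma \ref{fiveDimVariety} is, by itself, too weak in this three-dimensional setting: it only yields the lower bound $\dim(T_w Z_{q_i}\cap T_w Z_{q_j})\geq 2$ on pairwise intersections, and no direct upper bound on the triple intersection. The fix is to work with the explicit product structure $Z_{q_i}=S_{q_i}\times S_{q_i}$, so that the triple intersection factors as $V_1\times V_2$, after which the bound on each $V_\alpha$ reduces to the elementary pigeonhole argument on unit vectors given above.
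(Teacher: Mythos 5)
Your proof is correct. The contradiction framework is right: assuming $\dim(T_w Z_{q_i}\cap T_w W)\geq 3$ for all $i$ forces $T_w W\subseteq\bigcap_i T_w Z_{q_i}$, and you correctly compute this triple intersection via the product structure $T_w Z_{q_i}=(w_1-q_i)^{\perp}\times(w_2-q_i)^{\perp}$, reducing to the elementary fact that three distinct points $q_1,q_2,q_3$ cannot give three unit vectors $w_\alpha-q_i$ all lying in a single line.

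The paper does not spell out a proof — it simply asserts after Lemma \ref{fiveDimVariety} that ``a similar argument establishes'' Lemma \ref{threeDimVariety}. You correctly observe that the literal repetition of the Lemma \ref{fiveDimVariety} argument does not close: Lemma \ref{vectorSpaceSumLem} controls the \emph{sum} of the three tangent spaces, and three $4$-dimensional subspaces of $\RR^6$ can sum to all of $\RR^6$ while still having a $3$-dimensional triple intersection. What you overlooked, though, is that the paper has Lemma \ref{evenDim} available for exactly this purpose: if $T_w W\subseteq T_w Z_{q_i}\cap T_w Z_{q_j}$, then $\dim(T_w Z_{q_i}\cap T_w Z_{q_j})\geq 3$, and parity forces it to be $4$, so $T_w Z_{q_1}=T_w Z_{q_2}=T_w Z_{q_3}$; the sum then has dimension $4$, contradicting Lemma \ref{vectorSpaceSumLem}. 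That is almost certainly the intended ``similar argument'' (and is the only place Lemma \ref{evenDim} would be used). Your approach sidesteps both black-box lemmas and works directly with the product structure of the double-spheres; the two routes are closely related — Lemma \ref{evenDim} is itself proved via the same factorization $T_w(Z_{q_1}\cap Z_{q_2})=V_1\times V_2$ — but yours is more self-contained and makes the underlying geometry more transparent.
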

% \begin{proof}
% Suppose each of $q_1,q_2,q_3$ satisfies $\dim(T_wZ_{q_i}\cap T_wW)= 3$. Then by Lemma \ref{evenDim}, $ \dim(T_wZ_{q_i}+T_wZ_{q_j})$ must be even, so $\dim(T_wZ_{q_i}+T_wZ_{q_j})=4$. Thus $\dim(T_wZ_{q_1}+T_wZ_{q_2}+T_wZ_{q_3})=4$, which contradicts Lemma  \ref{vectorSpaceSumLem}.
% \end{proof}

If $W$ has dimension one, two, or four, then a more complicated argument is required.
 
\begin{lem}\label{fourDimVariety}
Let $W\subset\RR^6$ be a four-dimensional variety defined by polynomials of degree at most $t$. Then there is a set $W^\prime\subset W$ of dimension at most three, and for each double-sphere $Z_{q}$ there is a variety $Z_{q}^\prime\subset W\cap Z_q$ of dimension at most two so that: if $w\in W\backslash W^\prime$, then there can be at most two double-spheres $Z_q$ with $w\in Z_{q}\backslash Z_{q}^\prime$ so that $w$ is contained in a three-dimensional component of $W\cap Z_q$.  The varieties $W^\prime$ and $Z_q^\prime$ are defined by polynomials whose degree is bounded by a function depending only on $t$.
\end{lem}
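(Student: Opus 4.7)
The plan is to combine a tangent-space analysis (in the spirit of Lemmas~\ref{fiveDimVariety} and \ref{threeDimVariety}) with a fiber-dimension count on an incidence variety in $W\times\RR^3$. First, I would include the singular locus $W_{\mathrm{sing}}$ in $W'$, and take $Z_q'$ to be the union of all irreducible components of $W\cap Z_q$ of dimension at most $2$ together with the singular locus of each $3$-dimensional component. The degenerate case $Z_q\subseteq W$ (in which $W\cap Z_q=Z_q$ is $4$-dimensional and has no $3$-dimensional component) involves only finitely many $q$ and so does not contribute to the count. With these choices, if $w\in W\setminus W'$ and $w\in Z_q\setminus Z_q'$ lies on a $3$-dimensional component of $W\cap Z_q$, then $w$ is a regular point of $W$ and of that component, and hence $\dim(T_w Z_q\cap T_w W)\geq 3$.

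Next, I translate this tangent-space condition into linear algebra. Write $N_w W=\mathrm{span}(\alpha,\beta)$ for the $2$-dimensional normal space of $W$ at $w$, with $\alpha=(\alpha_1,\alpha_2)$ and $\beta=(\beta_1,\beta_2)$ in $\RR^3\times\RR^3$. Using that $T_w Z_q=(w_1-q)^\perp\times(w_2-q)^\perp$, the condition $\dim(T_w Z_q\cap T_w W)\geq 3$ is equivalent to the linear dependence of the four covectors $\alpha,\beta,(w_1-q,0),(0,w_2-q)$; i.e., to the existence of $(a,b,c,d)\neq 0$ with
\begin{equation*}
c(q-w_1)=a\alpha_1+b\beta_1,\qquad d(q-w_2)=a\alpha_2+b\beta_2.
\end{equation*}
Setting $P_i(w):=\mathrm{span}(\alpha_i,\beta_i)\subseteq\RR^3$, in the generic case $\dim P_i(w)=2$ for $i=1,2$ one checks that $c,d\neq 0$, so that $q-w_1\in P_1(w)$ and $q-w_2\in P_2(w)$. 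Thus $q$ must lie in the affine intersection $(w_1+P_1(w))\cap(w_2+P_2(w))\subset\RR^3$, which is typically an affine line $L$; combined with $|q-w_1|=1$, the point $q$ lies in $L\cap S_{w_1}$, a set of cardinality at most $2$.

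To keep $W'$ within the required dimension bound, I would also place into $W'$ the degenerate locus on which the above argument fails --- namely the points $w$ at which $\dim P_i(w)\leq 1$ for some $i$, or the affine intersection $(w_1+P_1(w))\cap(w_2+P_2(w))$ has dimension $\geq 2$, or that affine subspace contains the full circle $S_{w_1}\cap S_{w_2}$. Each of these is an algebraic condition on $w$ of degree depending only on $t$. To show $\dim W'\leq 3$, I apply a fiber-dimension count to the incidence variety
\begin{equation*}
\mathcal{V}=\{(w,q)\in W_{\mathrm{reg}}\times\RR^3:\ w\in Z_q\setminus Z_q',\ \dim(T_w Z_q\cap T_w W)\geq 3,\ Z_q\not\subseteq W\}.
\end{equation*}
Since $\dim W=4$, the constraint $w\in Z_q$ cuts codimension $2$ inside the $\RR^3$-fiber over $w$, and the rank-drop condition cuts a further codimension at least $1$ (the trivially rank-dropping locus $Z_q\subseteq W$ having been excluded); hence $\dim\mathcal V\leq 4$. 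Consequently the closed locus in $W$ over which the projection $\mathcal V\to W$ has positive-dimensional fibers has dimension at most $3$, and may be added to $W'$ without violating the bound. For $w\in W\setminus W'$ the fiber is $0$-dimensional, and the line--sphere argument bounds its size by $2$. The main obstacle is making the codimension-one rank-drop claim rigorous after the various ad hoc exclusions; once this is done, the degree bounds for the polynomials defining $W'$ and $Z_q'$ follow from the complexity bounds on singular loci, tangent spaces, and minor ranks recorded in Section~\ref{realAlgVarietySec}.
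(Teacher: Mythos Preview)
Your approach diverges substantially from the paper's. You work with the normal space $N_wW=\operatorname{span}(\alpha,\beta)$ and translate the condition $\dim(T_wZ_q\cap T_wW)\ge 3$ into a linear dependence among $\alpha,\beta,(w_1-q,0),(0,w_2-q)$, which in the generic case forces $q$ onto an affine line meeting $S_{w_1}$. The paper instead studies the set $\Gamma_W=\{(w,v,v):|v|=1,\ (v,0),(0,v)\in T_wW\}$, splits into cases according to whether the locus $W_1=\{w:|\Gamma_W(w)|=2\}$ has dimension $\le 3$ or $4$, and in the four-dimensional case obtains a smooth vector field $(v(w),v(w))$ whose \emph{integral curve} through $w$ would have to lie in all three $Z_{q_i}$, contradicting that three double-spheres meet in at most four points.

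Your sketch has a genuine gap at exactly the step you flag. You place into $W'$ the loci where $\dim P_i(w)\le 1$, where $(w_1+P_1)\cap(w_2+P_2)$ is two-dimensional, etc., and then claim $\dim W'\le 3$ via the fiber-dimension count on $\mathcal V$. But these degenerate loci need not be proper in $W$: for $W=\RR^3\times C$ with $C$ a curve one has $N_wW\subset\{0\}\times\RR^3$, so $\dim P_1(w)=0$ identically and your $W'$ is all of $W$. The fiber-dimension argument does not repair this, because the assertion that the rank-drop condition ``cuts a further codimension at least $1$'' in the $q$-circle is precisely what is in question---it is not implied by merely excluding $Z_q\subset W$---so using it to deduce $\dim\mathcal V\le 4$ and thence $\dim W'\le 3$ is circular. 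In such examples the lemma is still true, but via a different affine constraint ($q\in w_2+P_2$ rather than your line $L$); a uniform argument covering all degeneracies is exactly what the paper's $\Gamma_W$ dichotomy and integral-curve device supply, and that idea is missing here.
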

\begin{proof}
Let $f_1,\ldots,f_{\ell}$ be generators of $I(W)$. Let 
\begin{equation*}
\begin{split}
\Gamma_W&=\{(w,v,v)\in \RR^6\times \RR^3\times\RR^3\colon w\in W_{\operatorname{reg}},\ v\cdot v=1,\\
&\qquad\qquad (v,0)\cdot \nabla f_i(w)=0\ \textrm{and}\ (0,v)\cdot \nabla f_i(w)=0,\ i=1,\ldots,\ell\}\\
&=\{(w,v,v)\in \RR^6\times \RR^3\times\RR^3\colon w\in W_{\operatorname{reg}},\ v\cdot v=1,\ (v,0)\in T_wW,\ (0,v)\in T_wW\}.
\end{split}
\end{equation*}
The definition of $\Gamma_W$ is motivated by the observation that if $w\in Z_{q_1}\cap Z_{q_2}$ and if $T_wZ_{q_1}\neq T_wZ_{q_2}$, then $T_wZ_{q_1}\cap T_wZ_{q_2}$ contains exactly two vectors of the form $(v,v)\in\RR^3\times\RR^3$ with $|v|=1$. Thus if $w$ is a regular point of $W$ and if $\dim (T_wW\cap T_wZ_{q_i})=3,\ i=1,2,$ then $T_wW$ must contain at least two vectors of the form $(v,v)\in\RR^3\times\RR^3$ with $|v|=1$. The set $\Gamma_W$ will help us measure where this can occur. 

For each $w\in W_{\operatorname{reg}}$, define $\Gamma_W(w)= \Gamma_W\cap (\{w\}\times\RR^6)$. This set is either empty, consists of two unit vectors of the form $\pm v$, or is of the form $\{(v,v)\in\RR^3\times\RR^3\colon v\in S^1\},$ where $S^1\subset\RR^3$ is a circle. 

Let 
$$
W_1=\{w\in W_{\operatorname{reg}}\colon |\Gamma_W(w)|=2)\}
$$ 
(i.e.~$W_1$ is the set of points where $\Gamma_W(w)$ consists of two points) and let $\Gamma_W^\prime=\Gamma_W\cap (W_1\times\RR^6)$. We claim that $\Gamma_W^\prime$ is a semi-algebraic set. To see this, note that $\Gamma_W$ is a semi-algebraic set, so it suffices to prove that $W_1$ is semi-algebraic. Define $W_{\operatorname{circle}}$ to be the projection of the set
$$
\{(w, v_1,v_2, v_3)\in W_{\operatorname{reg}}\times \RR^3\times\RR^3\times\RR^3\colon v_i \neq \pm v_j\ \textrm{if}\ i\neq j; (w,v_i,v_i)\in \Gamma_W\ i=1,2,3\}
$$
to $W$. Define $W_{\operatorname{non-empty}}$ to be the projection of $\Gamma_W$ to $W$ and define $W_{\operatorname{empty}}=W_{\operatorname{reg}}\backslash W_{\operatorname{non-empty}}$. Then 
$$
W_1 = W_{\operatorname{reg}}\backslash\big(W_{\operatorname{empty}}\cup W_{\operatorname{circle}}\big).
$$
Now that we have established that $\Gamma_W$  is semi-algebraic, we can continue with the proof. 

First we will deal with the case where $\dim(\Gamma_W^\prime)\leq 3$. Let $W^\prime\subset W$ be an algebraic variety that contains the union of $W_{\operatorname{sing}}$ and the projection of $\Gamma_W^\prime$. For each double-sphere $Z_q$, let $Z_q^\prime=\emptyset$. To see that this choice of $W^\prime$ and $Z_q^\prime$ satisfies the conclusions of the lemma, suppose there exists $w\in W\backslash W^\prime$ and three double-spheres $Z_{q_1},Z_{q_2},Z_{q_3}$ so that for each $i=1,2,3$, $w\in Z_{q_i}\backslash Z_{q_i}^\prime$ and $w$ is contained in a component of $W\cap Z$ of dimension at least three. In particular, this means that $\Gamma_W(x)$ is non-empty, so $\Gamma_W(x)$ must be of the form $\{(v,v)\in\RR^3\times\RR^3\colon v\in S^1\}.$ This implies that then $T_w(W)$ is a Cartesian product $V\times V$, where $V\subset\RR^3$ is a two-dimensional subspace. Since $Z_{q_i}$ is a Cartesian product $S_{q_i}\times S_{q_i}$, $i=1,2,3$, this means that $\dim(T_wW \cap T_w Z_{q_i})=4,\ i=1,2,3$, which is impossible.

Next we will deal with the case where $\dim(\Gamma_W^\prime)=4$. Recall from Section \ref{realAlgVarietySec} that we can write $\Gamma_W^\prime \subset \Gamma_1\cup\Gamma_2$, where $\Gamma_1$ is a four-dimensional smooth manifold and $\Gamma_2$ is an algebraic variety of dimension $\leq 3$. Define $W^\prime\subset W$ to be a variety of dimension at most three that contains the projection of $\Gamma_2$.
Observe that the projection of $\Gamma_1$ to $W$ is still a smooth manifold; call this set $W_2$. For each point $w\in W_2$, we can associate a unit vector $v(w)\in\RR^3$ so that for each $w\in W_2,$ $(w,v(w),v(w))\in \Gamma_1$ and $(w,-v(w),-v(w))\in \Gamma_1$. Since $\Gamma_1$ is smooth, so is the vector field $(v(w),v(w))$. In particular, every point on $W_2$ lies on a unique integral curve of the vector field $(v(w),v(w))$ (see e.g. \cite[Chapter 9]{Lee} for additional background on vector fields and their associated integral curves). 

For each double-sphere $Z_q$, define 
$$
\Gamma_{W\cap Z_q}=\{(w,v,v)\in (W\cap Z_q)_{\operatorname{reg}}\colon v\cdot v=1,\ (v,0)\in T_w( W\cap Z_q),\ (0,v)\in T_w( W\cap Z_q)\}.
$$
As above, define 
$$
Z_{q,1}=\{w\in (W\cap Z_q)_{\operatorname{reg}}\colon |\Gamma_{W\cap Z_q}(w)|=2)\}.
$$ 
Define $\Gamma_{W\cap Z_q}^\prime= \Gamma_{W\cap Z_q}\cap (Z_{q,1}\times\RR^6)$. If $\dim(\Gamma_{W\cap Z_q}^\prime)=2$, let $Z_q^\prime$ be the projection of $\Gamma_{W\cap Z_q}^\prime$ to $W\cap Z_q$. If instead $Z_{q,1}$ has dimension three, write $Z_{q,1}\subset Z_{q,2}\cup Z_q^\prime,$ where $Z_{q,2}$ is a three dimensional manifold and $Z_q^\prime$ is an algebraic variety of dimension at most two. As discussed above, we obtain a smooth vector field $(v(w),v(w))$ on $Z_{q,2}$. Note that if $w\in W_2\cap Z_{q,2}$, then the vector fields $(v(w),v(w))$ arising from $W_2$ and from $Z_{q,1}$ agree. Thus their integral curves also agree in a small (Euclidean) neighborhood of $w$.

We will show that this choice of $W^\prime$ and $Z_q^\prime$ satisfies the conclusions of the lemma. Indeed, suppose there exists $w\in W\backslash W^\prime$ and there are three double-spheres $Z_{q_1},Z_{q_2},Z_{q_3}$ so that for each $i=1,2,3$, $w\in Z_{q_i}\backslash Z_{q_i}^\prime$ and $w$ is contained in a component of $W\cap Z$ of dimension at least three. In particular, this means that $\Gamma_W(x)$ is non-empty.

If $\Gamma_W(w)$ is of the form $\{(v,v)\colon v\in S^1\}$, then as noted above, $T_w(W)$ is a Cartesian product $V\times V$, where $V\subset\RR^3$ is a two-dimensional subspace. Since $Z_{q_i}$ is a Cartesian product $S_{q_i}\times S_{q_i}$, $i=1,2,3$, this means that $\dim(T_wW \cap T_w Z_{q_i})=4,\ i=1,2,3$, which is impossible.

Now suppose $|\Gamma_W(x)|=2$. Then $w\in W_1$. Since by assumption $w\not\in W^\prime$, we must have $w\in W_2$. Then there is a Euclidean neighborhood $O$ of $w$ and an integral curve $\gamma\subset W_2\cap O$ of the vector field $(v(w),v(w))$ that contains $w$ so that $\gamma$ is contained in each of $Z_{q_1},Z_{q_2}$, and $Z_{q_3}$. But this is impossible, since every triple intersection of double-spheres must consist of at most four points. 
\end{proof}

A similar (but easier) argument establishes the following analogous result for two-dimensional varieties and one dimensional varieties:
\begin{lem}\label{twoDimVariety}
Let $W\subset\RR^6$ be a variety of dimension one or two defined by polynomials of degree at most $t$. Then there is a set $W^\prime\subset W$ of dimension at most $\dim(W)-1$, and for each double-sphere $Z_{q}$ there is a variety $Z_{q}^\prime\subset W\cap Z_q$ of dimension at most $\dim(W)-1$, so that if $w\in W\backslash W^\prime$ then there can be at most two double-spheres $Z_q$ with $w\in Z_{q}\backslash Z_{q}^\prime$ so that $w$ is contained in a component of $W\cap Z_q$ of dimension $\geq \dim(W)-1$. The varieties $W^\prime$ and $Z_q^\prime$ are defined by polynomials whose degree is bounded by a function that depends only on $t$ (if the varieties are zero-dimensional, this means that they are finite sets whose cardinality is bounded by a function that depends only on $t$).
\end{lem}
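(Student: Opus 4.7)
The plan is to carry out the same tangent-diagonal analysis used for Lemma \ref{fourDimVariety}, with all dimensions reduced by two. As before, define the semi-algebraic set
\[
\Gamma_W=\{(w,v,v)\in W_{\operatorname{reg}}\times\RR^3\times\RR^3:\,v\cdot v=1,\ (v,0)\in T_wW,\ (0,v)\in T_wW\},
\]
together with its refinement $\Gamma_W'\subset\Gamma_W$ obtained by restricting to the locus $W_1$ on which the fiber $\Gamma_W(w)$ is a two-point set rather than a full circle of unit vectors. The semi-algebraicity of these sets follows verbatim from the projection argument given in the proof of Lemma \ref{fourDimVariety}.

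For $\dim W=1$, the tangent line $T_wW$ is one-dimensional and therefore cannot contain both of the linearly independent vectors $(v,0)$ and $(0,v)$ for any $v\neq 0$. So $\Gamma_W$ is automatically empty and only the non-degenerate branch of the argument applies. I would take $W'$ to be a zero-dimensional variety (of complexity bounded in terms of $t$) containing $W_{\operatorname{sing}}$, and for each double-sphere $Z_q$ take $Z_q'=(W\cap Z_q)_{\operatorname{sing}}$ together with the finitely many isolated points of $W\cap Z_q$ where the generic tangent position fails. The tangent-space argument from Lemmas \ref{fiveDimVariety}--\ref{threeDimVariety} then transcribes: three distinct double-spheres $Z_{q_1},Z_{q_2},Z_{q_3}$ each containing a component of $W\cap Z_{q_i}\setminus Z_{q_i}'$ through $w\in W\setminus W'$ of dimension $\geq 0$ would force $T_wW\subset T_wZ_{q_i}$ for each $i$, which combined with the product structure of each $T_wZ_{q_i}=u_i^\perp\times v_i^\perp$ contradicts Lemma \ref{vectorSpaceSumLem}.

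For $\dim W=2$ I would repeat the case split from Lemma \ref{fourDimVariety}. In the sub-case $\dim\Gamma_W'\leq 1$, let $W'$ be a one-dimensional algebraic variety containing $W_{\operatorname{sing}}$ and the projection of $\Gamma_W'$, and take $Z_q'=\emptyset$; the clean tangent-diagonal contradiction from the first part of the proof of Lemma \ref{fourDimVariety} then rules out three simultaneous one-dimensional intersection components, since any such configuration would force the two-dimensional $T_wW$ to contain a circle's worth of diagonal unit vectors, placing $w$ in $W'$. In the sub-case $\dim\Gamma_W'=2$, decompose $\Gamma_W'\subset\Gamma_1\cup\Gamma_2$ with $\Gamma_1$ a smooth $2$-manifold and $\Gamma_2$ of dimension at most $1$; push the image of $\Gamma_2$ into $W'$, and on the smooth projection $W_2=\pi(\Gamma_1)$ extract a smooth diagonal vector field $(v(w),v(w))$ with one-dimensional integral curves. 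The parallel construction on each $Z_q$ yields an exceptional set $Z_q'$ of dimension at most $1$ outside of which the vector field is defined and compatible with the one on $W_2$; if three distinct double-spheres shared a common integral curve near a point $w\in W\setminus W'$, the triple intersection $Z_{q_1}\cap Z_{q_2}\cap Z_{q_3}$ would contain that curve, contradicting the standing fact that it is a set of at most four points.

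The main obstacle, as in Lemma \ref{fourDimVariety}, is the degenerate sub-case for $\dim W=2$: verifying that the diagonal vector field extracted from $\Gamma_1$ is defined on a genuine smooth $2$-manifold $W_2$ and that the analogous construction on each $Z_q$ cleanly produces a $1$-dimensional semi-algebraic exceptional set $Z_q'$ with complexity bounded in terms of $t$, so that the integral-curve/four-point argument applies. Everything else, including the semi-algebraic bookkeeping and the tangent-space computations, is a dimension-lowered transcription of the $4$-dimensional proof.
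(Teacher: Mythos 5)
Your observation that $\Gamma_W=\emptyset$ when $\dim W=1$ is correct, but the final step of that case does not go through. The inclusion you obtain is $T_wW\subset T_wZ_{q_i}$ — a $1$-dimensional subspace inside a $4$-dimensional one — and this does not interact with Lemma \ref{vectorSpaceSumLem} the way the analogous inclusion does in Lemma \ref{fiveDimVariety}. There the hypothesis was $\dim(T_wZ_{q_i}\cap T_wW)=4=\dim T_wZ_{q_i}$, giving $T_wZ_{q_i}\subset T_wW$ and hence $\sum_i T_wZ_{q_i}\subset T_wW$, a $5$-dimensional space, contradicting $\sum_i T_wZ_{q_i}=\RR^6$. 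For $\dim W=1$ one instead gets $T_wW\subset\bigcap_i T_wZ_{q_i}$, and this intersection is a product of two lines, which can easily be $2$-dimensional even when the sum is $\RR^6$: the sum being $\RR^6$ only forces the vectors $w_1-q_i$ (resp.\ $w_2-q_i$) to not all be parallel, not to span $\RR^3$, so the common perpendicular line need not be trivial. A $1$-dimensional $T_wW$ sitting inside that intersection produces no contradiction. The correct mechanism at $\dim W=1$ is the four-point fact you invoke only in the degenerate $\dim W=2$ branch: take $Z_q'$ to contain every zero-dimensional component of $W\cap Z_q$ (this is essential regardless, since a fixed $w$ lies on uncountably many double-spheres, almost all of which meet $W$ only at isolated points, so $Z_q'$ must swallow them); then $w\in W_{\operatorname{reg}}$ with $w\in Z_{q_i}\setminus Z_{q_i}'$ for $i=1,2,3$ puts the unique irreducible branch of $W$ through $w$ inside $Z_{q_1}\cap Z_{q_2}\cap Z_{q_3}$, a set of at most four points.

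The $\dim W=2$ sketch has a related dimension-count problem. A $2$-dimensional $T_wW$ cannot ``contain a circle's worth of diagonal unit vectors'': that would force $T_wW\supset V\times V$ for a $2$-plane $V\subset\RR^3$, which is $4$-dimensional. So the ``circle'' branch of $\Gamma_W(w)$ never occurs when $\dim W=2$, and what you actually need in the sub-case $\dim\Gamma_W'\leq 1$ is that three double-spheres with $w$ in a $1$-dimensional component of each $W\cap Z_{q_i}$ force $\Gamma_W(w)\neq\emptyset$, i.e.\ $T_wW=\langle v\rangle\times\langle v\rangle$. That claim is not established by the sketch, and the mechanism used in Lemma \ref{fourDimVariety} — namely that the pairwise intersections $T_wZ_{q_i}\cap T_wZ_{q_j}$, which are $2$-dimensional product subspaces, are forced into $T_wW$ — has no room to operate when $T_wW$ itself is only $2$-dimensional. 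You will need to either prove the claim by a different tangent-space computation, or move some configurations into $Z_q'$, which you set to $\emptyset$ in this sub-case.
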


% \begin{lem}\label{oneDimVariety}
% Let $W\subset\RR^6$ be a one-dimensional variety and let $w\in W$ be a regular point of $W$. Let $q_1,q_2,q_3\in\RR^3$ be distinct points, and suppose $w\in Z_{q_i},\ i=1,2,3.$ Then we must have $\dim(T_w Z_{q_i} \cap T_w W)=0$ for at least one index $i$.
% \end{lem}

Combining the above results, we obtain the following.

\begin{lem}\label{twoThirdsDimensionLem}
Let $W\subset\RR^6$ be a real algebraic variety of dimension at least one defined by polynomials of degree at most $t$. Let $\pts\subset W$ be a set of points and let $\mathcal{Z}$ be a set of double-spheres. Then there is a set $W^\prime\subset W$; for each $Z\in\mathcal{Z}$, there is a set $Z^\prime\subset Z$; and there is a set $I_0\subset \mathcal{I}(\pts,\mathcal{Z})$ with the following properties:
\begin{itemize}
\item $W^\prime$ is a variety of dimension strictly less than $\dim(W)$, defined by polynomials whose degree is bounded by a function depending only on $t$.
\item $|I_0|\leq 2|\pts\backslash W^\prime|$.
\item $\dim(Z^\prime)\leq \frac{2}{3}\dim(W)$ for each $Z\in\mathcal{Z}$.
\item If $p\in \pts\backslash W^\prime$, $Z\in\mathcal{Z}$, and $p\in Z\backslash Z^\prime$, then either $(p,Z)\in I_0$ or the irreducible component of $W\cap Z$ containing $p$ has dimension at most $\frac{2}{3}\dim(W)$.
\end{itemize}
\end{lem}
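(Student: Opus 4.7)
The plan is to argue by case analysis on $k = \dim W$, invoking in each case one of the dimension-specific structural results (Lemmas \ref{fiveDimVariety}, \ref{threeDimVariety}, \ref{fourDimVariety}, or \ref{twoDimVariety}) to bound, at each generic point $p \in \pts$, the number of double-spheres $Z \in \mathcal{Z}$ through $p$ whose local intersection with $W$ near $p$ is too large. In every case this number will be at most two, which is exactly what is needed to define $I_0$ with $|I_0| \leq 2|\pts \setminus W'|$.

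The case $k=6$ is trivial: then $W = \RR^6$, so $W \cap Z = Z$ has dimension $4 = \tfrac{2}{3}k$ for every $Z \in \mathcal{Z}$, and I take $W' = \emptyset$, $Z' = \emptyset$, $I_0 = \emptyset$. For $k \in \{1, 2, 4\}$, I would apply Lemma \ref{twoDimVariety} (when $k \in \{1,2\}$) or Lemma \ref{fourDimVariety} (when $k=4$) essentially as a black box. These produce $W'$ of dimension strictly less than $k$ and, for each $Z \in \mathcal{Z}$, a set $Z' \subset W \cap Z$ whose dimension one checks directly to be at most $\tfrac{2}{3}k$. The combinatorial conclusion is that for each $w \in W \setminus W'$, at most two double-spheres $Z \in \mathcal{Z}$ with $w \in Z \setminus Z'$ contain $w$ in a component of $W \cap Z$ of dimension $\geq k-1$; for every other such $Z$, the component at $w$ has dimension at most $k-2$, which one verifies is $\leq \tfrac{2}{3}k$ for every $k \leq 6$.

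For $k \in \{3, 5\}$, Lemmas \ref{threeDimVariety} and \ref{fiveDimVariety} give only statements about tangent-space intersections, so I would take $W' = W_{\operatorname{sing}}$ (which has dimension $<k$ and, by the facts recalled in Section \ref{realAlgVarietySec}, is defined by polynomials of degree bounded in terms of $t$) and $Z' = \emptyset$. The tangent-space bound translates into a local component-dimension bound via the standard chain
$$
\dim X \;\leq\; \dim T_p X \;\leq\; \dim T_p(W \cap Z) \;\leq\; \dim(T_p W \cap T_p Z),
$$
valid for any irreducible component $X$ of $W \cap Z$ containing $p$ without any smoothness assumption on $X$ at $p$. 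The relevant lemma guarantees that at each $p \in W_{\operatorname{reg}}$ at most two $Z \in \mathcal{Z}$ through $p$ satisfy $\dim(T_p W \cap T_p Z) > \tfrac{2}{3}k$, so for every other $Z \in \mathcal{Z}$ through $p$ the component of $W \cap Z$ through $p$ has dimension $\leq \tfrac{2}{3}k$.

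In every case, I would then set
$$
I_0 = \{(p, Z) \in \mathcal{I}(\pts, \mathcal{Z}) : p \in \pts \setminus W' \text{ and } Z \text{ is one of the at most two ``bad'' double-spheres at } p\},
$$
which gives $|I_0| \leq 2|\pts \setminus W'|$ immediately, and by construction any $(p, Z)$ with $p \in \pts \setminus W'$ and $p \in Z \setminus Z'$ either lies in $I_0$ or has the component of $W \cap Z$ through $p$ of dimension at most $\tfrac{2}{3}\dim W$. The only real care required is arithmetic bookkeeping: matching the thresholds from the input lemmas (``component of dimension $\geq \dim W - 1$'' in Lemmas \ref{fourDimVariety}, \ref{twoDimVariety}, and ``tangent intersection exceeds a given value'' in Lemmas \ref{threeDimVariety}, \ref{fiveDimVariety}) against the target $\tfrac{2}{3}\dim W$ for each of the six possible values of $k$. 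The degree bounds on $W'$ and $Z'$ are inherited from the corresponding bounds in those lemmas and from the standard degree bound on the singular locus of $W$.
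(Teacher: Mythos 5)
Your proof is correct and takes essentially the same case-by-case approach as the paper: handle $k=6$ trivially; for $k\in\{3,5\}$ take $W'=W_{\operatorname{sing}}$ and use Lemmas \ref{fiveDimVariety}/\ref{threeDimVariety}; for $k\in\{1,2,4\}$ pull $W'$ and $Z'$ directly from Lemma \ref{twoDimVariety}/\ref{fourDimVariety}. The one minor divergence is that for $k\in\{3,5\}$ you set $Z'=\emptyset$ and pass from the tangent-space bound of the input lemma to the component-dimension bound via the chain $\dim X\leq\dim T_pX\leq\dim(T_pW\cap T_pZ)$, whereas the paper sets $Z'=(Z\cap W)_{\operatorname{sing}}$ and only needs the (easier) equality $\dim T_p(Z\cap W)=\dim_p(Z\cap W)$ at regular points of $Z\cap W$; both routes are valid, and yours is arguably cleaner since it avoids introducing $Z'$ at all in these cases. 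The arithmetic bookkeeping you flag ($3\leq\lfloor 2\cdot 5/3\rfloor$ for $k=5$, $2\leq 2\cdot 3/3$ for $k=3$, $k-2\leq\frac{2}{3}k$ for $k\leq 6$) does check out.
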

\begin{proof}
If $W=\RR^6$, let $W^\prime=\emptyset$ and for each $Z\in\mathcal{Z}$, let $Z^\prime=\emptyset$. Let $I_0=\emptyset$. We can immediately verify that This choice of $W^\prime, Z^\prime$ and $I_0$ satisfy the conclusions of the lemma. 

If $\dim(W)=3$ or $5$, let $W^\prime = W_{\operatorname{sing}}$. For each $Z\in\mathcal{Z}$, let $Z^\prime=(Z\cap W)_{\operatorname{sing}}$. Let 
\begin{equation}\label{defnOfI0}
I_0=\{(p,Z)\in\pts\times\mathcal{Z}\colon p\in W_{\operatorname{reg}},\ p\in (Z\cap W)_{\operatorname{reg}},\ \dim(T_p(Z\cap W)>\frac{2}{3}\dim(W)\}. 
\end{equation}
By Lemmas \ref{fiveDimVariety} and \ref{threeDimVariety}, we have that for each $p\in \pts\cap W_{\operatorname{reg}}$, there are at most two double-spheres $Z\in\mathcal{Z}$ with $(p,Z)\in I_0$. We can also verify that if $\dim(W)=5$, then $\dim(Z^\prime)\leq 4-1\leq \frac{2}{3}5$, while if $\dim(W)=3$ then $\dim(Z^\prime)\leq 3-1\leq \frac{2}{3}3.$

Next, if $\dim(W)=1,2,$ or $4$, let $W^\prime$ be the set given by Lemma \ref{twoDimVariety} or \ref{fourDimVariety}, respectively, and for each $Z\in\mathcal{Z}$ let $Z^\prime$ be the set given by Lemma \ref{twoDimVariety} or \ref{fourDimVariety} respectively, and define $I_0$ as in \eqref{defnOfI0}. 
\end{proof}

\subsection{Partitioning points and circles in dual space}
\begin{lem}\label{partitioningAVarietyLem}
For each $t\geq 1$, there is a constant $A$ so that the following holds. Let $W\subset\RR^6$ be a real algebraic variety of dimension at least one that is defined by polynomials of degree at most $t$. Let $\pts\subset W$ be a set of points and let $\mathcal{Z}$ be a set double-spheres. 

Then for each $D\geq 1$, there is:
\begin{itemize} 
\item An algebraic variety $V$ defined by polynomials of degree $\max(A,D)$, with $\dim(V)<\dim(W)$.
\item A set $\Omega$ of $O(D^6)$ cells (i.e.~subsets of $\RR^6$).
\item For each cell $\omega\in\Omega$, a set $\mathcal{Z}_{\omega}\subset\mathcal{Z}$.
\item A set $I_1\subset \mathcal{I}(\pts,\mathcal{Z})$.
\end{itemize}
These objects have the following properties.

The set $I_1$ is small:
$$
|I_1|\leq 2|\pts\backslash V|.
$$

Each cell contains few points. More precisely, for each $\omega\in\Omega$, 
\begin{equation}\label{D6PointsPerCell}
|\pts\cap\omega|\leq A|\pts\backslash V|D^{-6}.
\end{equation}

The varieties enter a controlled number of cells:
\begin{equation}\label{D4SurfacesPerCell}
\sum_{\omega}|\mathcal{Z}_\omega|\leq AD^4|\mathcal{Z}|.
\end{equation}

Finally, the sets $I_1$, $\Omega$ and $\{\mathcal{Z}_{\omega}\}$ capture the incidences between the points and surfaces. Specifically, if $(p,Z)\in \mathcal{I}(\pts,\mathcal{Z})$ then either $p\in V$; $(p,Z)\in I_1$; or $Z\in\mathcal{Z}_\omega$, where $\omega$ is the (unique) cell containing $p$. 
\end{lem}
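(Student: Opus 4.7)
The plan is to combine the bad-incidence accounting from Lemma \ref{twoThirdsDimensionLem} with a standard polynomial partitioning of $\RR^6$ applied to the surviving points.

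First I would apply Lemma \ref{twoThirdsDimensionLem} to $W$, $\pts$, and $\mathcal{Z}$ to obtain a subvariety $W^\prime\subsetneq W$, subvarieties $Z^\prime\subset W\cap Z$ for each $Z\in\mathcal{Z}$, and an incidence set $I_0\subset\mathcal{I}(\pts,\mathcal{Z})$ with $|I_0|\leq 2|\pts\setminus W^\prime|$; here $W^\prime$ and all the $Z^\prime$ are defined by polynomials of $O_t(1)$ degree, which I will absorb into the constant $A$. Then I would apply the Guth--Katz polynomial partitioning theorem (the case $d=6$, $e=0$, $f=6$ of Theorem \ref{projectedPartitioning}) to $\pts\setminus W^\prime$, producing a polynomial $P$ of degree at most $D$ so that $\RR^6\setminus Z(P)$ is a union of $O(D^6)$ connected components, each containing $O(|\pts\setminus W^\prime|/D^6)$ points of $\pts\setminus W^\prime$. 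By carrying out the iterated polynomial ham-sandwich construction while avoiding a proper Zariski-closed subset of the relevant polynomial space, I would arrange that (i) $P$ does not vanish identically on any positive-dimensional irreducible component of $W$, so that $\dim(Z(P)\cap W)<\dim W$, and (ii) at most half of the points of $\pts\setminus W^\prime$ lie on $Z(P)$.

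With $P$ in hand, I would set $V=(Z(P)\cap W)\cup W^\prime$, which has $\dim V\leq\max(\dim W-1,\dim W^\prime)<\dim W$ and is defined by polynomials of degree at most $\max(A,D)$ for $A$ chosen large enough in terms of $t$. The cells $\Omega$ would be the connected components of $\RR^6\setminus(Z(P)\cup W^\prime)$, of which Theorem \ref{numberOfConnectedComponents} gives $O(D^6)$. I would define $\mathcal{Z}_\omega=\{Z\in\mathcal{Z}\colon Z\cap\omega\neq\emptyset\}$ for each $\omega\in\Omega$ and set $I_1=I_0\cap((\pts\setminus V)\times\mathcal{Z})$. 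Each cell is disjoint from $Z(P)\cup W^\prime\supset V$, so $\pts\cap\omega\subset\pts\setminus V$; combined with property (ii) above this gives $|\pts\setminus W^\prime|\leq 2|\pts\setminus V|$ and hence $|\pts\cap\omega|\leq A|\pts\setminus V|D^{-6}$. Applying Theorem \ref{numberOfConnectedComponents} to each four-dimensional $Z\in\mathcal{Z}$ shows that $Z\setminus(Z(P)\cup W^\prime)$ has $O(D^4)$ connected components, so each $Z$ meets at most $O(D^4)$ cells and therefore $\sum_\omega|\mathcal{Z}_\omega|\leq AD^4|\mathcal{Z}|$. Every $p\in\pts\setminus V$ also lies in $\pts\setminus W^\prime$ and so contributes at most two pairs to $I_0$ by Lemma \ref{twoThirdsDimensionLem}, giving $|I_1|\leq 2|\pts\setminus V|$. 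Finally, if $(p,Z)\in\mathcal{I}(\pts,\mathcal{Z})$ with $p\notin V$, then $p$ lies in a unique cell $\omega\in\Omega$ and $p\in Z\cap\omega$, so $Z\in\mathcal{Z}_\omega$, verifying the incidence-capturing property.

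The main technical obstacle will be the combined ``balanced partitioning'' step: arranging simultaneously that the degree-$D$ polynomial $P$ avoids every positive-dimensional irreducible component of $W$ and that no more than half of the points of $\pts\setminus W^\prime$ land on $Z(P)$. The first is a Zariski-open condition in the space of degree-$D$ polynomials, and the second is a balance condition that must be enforced at each bisection of the iterated polynomial ham-sandwich construction; checking that the intersection of these two open conditions remains nonempty at every step of the iteration --- so that a single polynomial $P$ realizing both properties does exist --- is the delicate point that must be handled carefully.
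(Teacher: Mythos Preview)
Your approach is correct but genuinely different from the paper's, and in fact simpler in one respect while more involved in another.

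The paper does \emph{not} use an ordinary degree-$D$ partitioning of $\RR^6$. It applies Theorem~\ref{projectedPartitioning} with $f=\dim W$ after a generic rotation, obtaining a polynomial $P$ that depends only on the first $f$ coordinates and has degree $D^{6/f}$. This automatically forces $\dim(W\cap Z(P))<\dim W$ (a generic projection of $W$ to $\RR^f$ is finite-to-one), so the paper never has to engineer your condition~(i). The price is that a four-dimensional double-sphere $Z$ could meet $(D^{6/f})^4=D^{24/f}$ cells, which is far too many when $f<6$; this is exactly why the paper invokes Lemma~\ref{twoThirdsDimensionLem}: it replaces $Z\cap W$ by pieces $Y$ of dimension at most $\tfrac{2}{3}f$, each of which meets only $(D^{6/f})^{2f/3}=D^4$ cells, and routes the remaining incidences through $I_1$.

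Your route inverts this trade-off. With a full six-variable polynomial of degree $D$, the bound $\sum_\omega|\mathcal Z_\omega|\le AD^4|\mathcal Z|$ is immediate from Barone--Basu applied to the four-dimensional $Z$, and with your naive choice $\mathcal Z_\omega=\{Z:Z\cap\omega\neq\emptyset\}$ the incidence-capturing clause holds trivially. In particular, Lemma~\ref{twoThirdsDimensionLem} and the set $I_1$ are entirely unnecessary in your argument---you may take $I_1=\emptyset$ and drop $W'$. What you must supply instead is condition~(i), that the partitioning polynomial not vanish on any component of $W$; this is a standard refinement (it is, for instance, the content behind Theorem~\ref{partitioningOnAVariety}) and can be arranged at the cost of a constant factor in the degree.

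Your condition~(ii) is a red herring. You introduce it to match the $|\pts\setminus V|$ appearing in \eqref{D6PointsPerCell}, but note that the paper's own proof only records ``at most $A_1|\pts|D^{-6}$ points in each cell,'' and the sole application (Lemma~\ref{partitionCirclesLem}) uses $|\pts_i|$ rather than $|\pts_i\setminus V|$ on the right-hand side. So the sharper form of \eqref{D6PointsPerCell} is an overstatement that you need not chase; the ``delicate point'' you flag at the end disappears once you drop~(ii).
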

\begin{rem}
The main point of this lemma is that \eqref{D6PointsPerCell} says that at most a $O(D^{-6})$ fraction of the points are contained in each of the $O(D^6)$ cells, while \eqref{D4SurfacesPerCell} says that on average, at most a $O(D^{-2})$ fraction of the surfaces intersect each cell. This is a better ratio than one would expect if the varieties in $\mathcal{Z}$ were arbitrary four-dimensional varieties.
\end{rem}
\begin{proof}
Apply Lemma \ref{twoThirdsDimensionLem} to $W$, $\pts,$ and $\mathcal{Z}$, and let $I_0$, $W^\prime$, and $\{Z^\prime\}_{Z\in\mathcal{Z}}$ be the output of the lemma. Define $f=\dim(W)$ and for each $Z\in\mathcal{Z}$, define
\begin{equation*}
\begin{split}
\mathcal{Y}_Z= &\{Y\colon\ Y\ \textrm{is an irreducible component of}\ Z\cap W\ \textrm{with}\ 0<\dim(Y)\leq\frac{2}{3} f \}\\
&\cup \{Y\colon\ Y\ \textrm{is an irreducible component of}\ Z^\prime\}.
\end{split}
\end{equation*}

Note $|\mathcal{Y}_Z|\leq A_1$, where $A_1$ is a constant that depends only on $t$. Furthermore, each variety $Y\in\mathcal{Y}_Z$ has dimension at most $\frac{2}{3}f$ and is defined by polynomials of degree at most $A_1$. By Lemma \ref{twoThirdsDimensionLem}, if $(p,Z)\in I(\pts,\mathcal{Z})$ then either $(p,Z)\in I_0$, $p\in W^\prime$, or $p\in Y$ for some variety $Y\in\mathcal{Y}_Z$.

Apply a generic rotation and then apply Theorem \ref{projectedPartitioning} (with $d=6,\ e=0,\ f=\dim W$). Let $P$ be the resulting partitioning polynomial of degree at most $D^{6/f}$; we have at most $A_1D^6$ cells, with at most $A_1|\pts|D^{-6}$ points in each cell. For each $Z\in\mathcal{Z}$, define
$$
\Omega(Z)=\{\omega\colon \omega\cap Y\neq\emptyset\ \textrm{for some}\ Y\in\mathcal{Y}_Z\}.
$$
By Theorem \ref{numberOfConnectedComponents}, each $Y\in\mathcal{Y}_Z$ can intersect 
$$
O\big((\deg P)^{\dim Y}\big)=O\big( (D^{6/f})^{\frac{2}{3}f}\big)=O(D^4)
$$
cells, where the implicit constant depends only on $t$. Thus 
\begin{equation}\label{boundOnOmegaZ}
|\Omega(Z)|\leq AD^4,
\end{equation}
where the constant $A$ depends only on $t$. For each cell $\omega\in\Omega$, define 
$$
\mathcal{Z}_{\omega}=\{Z\in\mathcal{Z}\colon\omega\in\Omega(Z)\}.
$$

By \eqref{boundOnOmegaZ},
$$
\sum_{\omega}|\mathcal{Z}_\omega|\leq A D^4|\mathcal{Z}|.
$$ 

Let $V= W^\prime\cup \big(W \cap Z(P)\big)$. Define $I_1 = \{(p,Z)\in I_0\colon p\in W\backslash V\}$. Since each $p\in\pts$ can occur in $I_0$ at most twice, we have $|I_1|\leq 2|\pts\backslash V|.$
\end{proof}

% We will need to deal with zero-dimensional varieties separately.

% \begin{lem}\label{oneDimCase}
% For each $t\geq 1$, there is a constant $A$ so that the following holds. Let $W\subset\RR^6$ be a zero-dimensional real algebraic variety that is defined by polynomials of degree at most $t$. Let $\pts\subset W$ be a set of points and let $\mathcal{Z}$ be a set double-spheres. Then
% $$
% |I(\pts,\mathcal{Z})|\leq |\pts||\mathcal{Z}| \leq A|\mathcal{Z}|.
% $$
% \end{lem}

%
%
The following multi-level partitioning theorem is closely related to the partitioning theorem of Matou\v{s}ek and Pat\'akov\'a \cite{MP}. However, the partitioning theorem in \cite{MP} is for points and hypersurfaces, while the present partitioning theorem exploits the fact that the varieties we wish to partition have the special intersection properties described in Lemma \ref{twoThirdsDimensionLem}. This allows us to obtain a stronger result.

\begin{lem}[Multi-level partitioning for dual circles]\label{partitionCirclesLem} 
For each $\eps>0$, there is a constant $A$ so that the following holds. Let $\pts\subset\RR^6$ be a set of points. Let $\mathcal{Z}$ be a set of double-spheres. Then for each $D\geq 1$, there is a partition of $\RR^6$ into a set $\Omega$ of $\leq A D^{6+\eps}$ cells, a set $I_0\subset \mathcal{I}(\pts,\mathcal{Z})$ of size at most
\begin{equation}\label{boundOnI0}
|I_0|\leq A D^{\eps} |\pts|,
\end{equation}
and a set $\pts^*\subset\pts$ of size at most 
\begin{equation}\label{sizeOfPStar}
|\pts^*|\leq AD^{6+\eps}.
\end{equation}
These sets have the following properties.
\begin{itemize}
\item Each cell $\omega$ contains at most $A|\pts|D^{-6+\eps}$ points from $\pts$.
\item For each cell $\omega$, there is a set $\mathcal{Z}_{\omega}\subset\mathcal{Z}$ so that $\sum |\mathcal{Z}_{\omega}|\leq AD^{4+\eps}|\mathcal{Z}|$.
\item If $(p,Z)\in \mathcal{I}(\pts,\mathcal{Z})$ then either $p\in\pts^*$; $(p,Z)\in I_0$; or $Z\in\mathcal{Z}_\omega$, where $\omega$ is the (unique) cell containing $p$.
\end{itemize}
\end{lem}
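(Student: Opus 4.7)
The strategy is to iteratively apply Lemma \ref{partitioningAVarietyLem}, in the spirit of the multi-level partitioning of \cite{MP}, exploiting the improved ratio between cell count and surface-intersection count that Lemma \ref{partitioningAVarietyLem} provides for double-spheres. Set $W_0 = \RR^6$ and $\pts_0 = \pts$, and at each step $k \geq 0$ apply Lemma \ref{partitioningAVarietyLem} with parameter $D$ to the current variety $W_k$, the point set $\pts_k = \pts \cap W_k$, and $\mathcal{Z}$. This gives a set of cells $\Omega^{(k)}$ with $|\Omega^{(k)}| = O(D^6)$, surface sets $\mathcal{Z}_\omega^{(k)}$ with $\sum_\omega |\mathcal{Z}_\omega^{(k)}| = O(D^4 |\mathcal{Z}|)$, a bad-incidence set $I_1^{(k)}$ with $|I_1^{(k)}| \leq 2 |\pts_k|$, and a new variety $V_k$ with $\dim V_k < \dim W_k$; each cell in $\Omega^{(k)}$ contains at most $O(|\pts_k|/D^6) \leq O(|\pts|/D^6)$ points of $\pts_k$. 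I then set $W_{k+1} := V_k$, $\pts_{k+1} := \pts_k \cap V_k$, and continue. Since the dimension strictly drops at each step, this terminates at some $k^* \leq 6$ with $\dim W_{k^*} = 0$; I set $\pts^* := \pts \cap W_{k^*}$.

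To assemble the final output I take $\Omega := \bigsqcup_{k = 0}^{k^* - 1} \Omega^{(k)}$ (with each cell carrying its surface set $\mathcal{Z}_\omega^{(k)}$), $I_0 := \bigcup_k I_1^{(k)}$, and $\pts^*$ as above. Each point $p \in \pts \setminus \pts^*$ is assigned to the unique cell $\omega \in \Omega^{(k)}$ containing it, where $k$ is the first level at which $p \in W_k \setminus V_k$. Summing the per-level bounds over the at most $6$ levels yields $|\Omega| \leq A D^{6+\eps}$, at most $A |\pts| D^{-6+\eps}$ points per cell, $\sum_\omega |\mathcal{Z}_\omega| \leq AD^{4+\eps}|\mathcal{Z}|$, and $|I_0| \leq AD^\eps |\pts|$; the constant-factor losses from the summation are absorbed into the $D^\eps$ slack once $D$ is chosen large relative to $\eps$. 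The incidence coverage condition is immediate: an incidence $(p, Z) \in \mathcal{I}(\pts, \mathcal{Z})$ with $p \notin \pts^*$ is captured either by $I_1^{(k)}$ or by $\mathcal{Z}_\omega^{(k)}$ at the level that places $p$, via the conclusion of Lemma \ref{partitioningAVarietyLem}.

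The bound $|\pts^*| \leq A D^{6+\eps}$ comes from the fact that $W_{k^*}$ is zero-dimensional. Lemma \ref{partitioningAVarietyLem} outputs a bad variety of degree $\max(A(t), D)$, where the constant $A$ depends on the degree $t$ of the input; iterating at most $6$ times, the degrees of the defining polynomials of $W_{k^*}$ are bounded by a fixed polynomial in $D$. By B\'ezout, the zero-dimensional set $W_{k^*}$ then has at most a fixed polynomial (in $D$) number of points, and this is $\leq A D^{6+\eps}$ once $D$ is sufficiently large (depending on $\eps$) and $A$ is chosen depending on $\eps$.

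The main technical obstacle is precisely this degree blowup across the iterations: because the constants in Lemma \ref{partitioningAVarietyLem} depend on the degree of the input variety, the degree of $W_k$ may grow from one level to the next, and one must verify that after the (at most $6$) iterations the final degree is still polynomially bounded in $D$. The $D^\eps$ slack in every clause of the statement, together with the constant $A = A(\eps)$, is what allows this polynomial growth and the $O(1)$ loss from summing over levels to be absorbed without changing the exponent of $D$.
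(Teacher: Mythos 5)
Your overall plan---iteratively reducing to lower-dimensional varieties via Lemma \ref{partitioningAVarietyLem}---is the right circle of ideas, but the one-shot iteration you propose (apply Lemma \ref{partitioningAVarietyLem} once with parameter $D$ at each dimension level, without recursing inside the cells) does not close, and the obstacle you flag at the end is not actually absorbed by the $D^\eps$ slack. The constant $A$ in Lemma \ref{partitioningAVarietyLem} depends on the degree $t$ of the ambient variety $W$, as do the implicit constants in the cell count $O(D^6)$, the per-cell point bound, and the surface-count sum. In your scheme $W_1 = V_0$ already has degree roughly $D$ (since $V$ has degree $\max(A, D)$ and $W\cap Z(P)$ multiplies degrees), so applying Lemma \ref{partitioningAVarietyLem} to $W_1$ invokes it with $t \approx D$. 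The lemma gives no quantitative control on how its constants scale in $t$; even if that dependence were polynomial, after a few steps the degree of $W_k$ is a large power of $D$, and the resulting constants become super-constant in $D$, which destroys the claimed bounds $|\Omega| \le AD^{6+\eps}$, $|\pts\cap\omega| \le A|\pts|D^{-6+\eps}$, and $\sum_\omega|\mathcal{Z}_\omega| \le AD^{4+\eps}|\mathcal{Z}|$. The bound on $|\pts^*|$ breaks most visibly: $W_{k^*}$ is zero-dimensional but defined by polynomials of degree that is a (possibly high) power of $D$, so B\'ezout gives $|W_{k^*}|$ bounded by a high power of $D$---nowhere near $AD^{6+\eps}$. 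The appeal to choosing $D$ ``large relative to $\eps$'' does not help, since the overshoot is a polynomial factor, not a constant.

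The paper sidesteps all of this by running the Matou\v{s}ek--Pat\'akov\'a-style double recursion: at each dimension level $i$ it applies Lemma \ref{partitioningAVarietyLem} with a \emph{constant} parameter $D_i$ (depending only on $\eps$), so the polynomial $P$, the bad variety $W_{i+1}$, and all their degrees stay bounded by constants. The gain in the number of cells is then obtained by \emph{recursing the full lemma inside each cell} with parameter $D/D_i$, and the proof is an induction on $|\pts|$. This is what lets the six $O(1)$-factor losses from the dimension-reduction steps be swallowed into $D^\eps$ by choosing $D_i$ large, while every variety encountered has bounded degree. Your argument is missing this recursion-inside-cells step, and without it the degree and constant growth you acknowledge cannot be controlled.
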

\begin{proof}
We will prove the result by induction on $|\pts|$. If $|\pts|$ is sufficiently small then the result is immediate if we choose $A$ large enough. Similarly, observe that if $D$ is sufficiently small then the result is immediate if we choose $A$ large enough; simply let $\Omega=\{\RR^6\}.$ Thus we can assume that $D$ is larger than some constant $D_{\operatorname{min}}$ that depends only on $\eps$.

Let $W_0 = \RR^6$, let $\pts_0 = \pts$, and let $D_0$ be a constant depending only on $\eps$. For each $i=1,\ldots,5$ with $\dim W_i\geq 1$, let $D_i$ be a large constant depending only on $D_0,\ldots,D_{i-1}$ (of course, this means that the constants $D_i$ will depend only on $\eps$). We will select $D_{\operatorname{min}}$ sufficiently large (depending only on $\eps$) so that $D_{\operatorname{min}}/D_i\geq 1$ for each index $i$.

 Apply Lemma \ref{partitioningAVarietyLem} to $W_i$ and $\pts_i\cap (W_i)_{\operatorname{reg}}$ with parameter $D_i$. Let $W_{i+1}$ be the resulting variety of dimension at most $\dim(W_i)-1$, and let $\Omega_i$, $\{\mathcal{Z}_\omega\}_{\omega\in\Omega_i}$ and $J_i$ be the resulting set of cells, varieties, and excess incidences. 

Define $\pts_{i+1} = \pts_i\cap W_{i+1}$ and define $\pts_i^\prime=\pts_i\backslash \pts_{i+1}$. Note that 
$$
|J_i|\leq 2 |\pts_i^\prime|.
$$
There is a number $A_i$, depending only on $D_0,\ldots,D_{i-1}$ so that $|\Omega_i|\leq A_i D_i^6$;  $|\pts_{i}^\prime\cap\omega|\leq A_i D_i^{-6}|\pts_i|$ for each $\omega\in\Omega_i$; and $\sum_{\omega\in\Omega_i}|\mathcal{Z}_\omega|\leq A_iD_i^4|\mathcal{Z}|$.

Apply the induction hypothesis inside each cell $\omega\in\Omega_i$ with parameter $D/D_i$ (since $D\geq D_{\operatorname{min}}$ by assumption, we have $D/D_i\geq 1$ so the induction hypothesis is satisfied). If $D_i$ is chosen sufficiently large compared to $A_i$ (which in turn depends only on $D_0,\ldots D_{i-1}$), then we obtain a set of cells $\Omega_\omega$, and for each $\omega^\prime\in\Omega_{\omega}$, a set $\mathcal{Z}_{\omega^\prime}$ so that 
$$
\sum_{\omega\in\Omega}|\Omega_{\omega}|\leq A_iD_i^6 A(D/D_i)^{6+\eps}\leq \frac{1}{6}AD^{6+\eps};
$$
for each $\omega\in\Omega$, each cell $\omega^\prime\in\Omega_\omega$ satisfies 
$$
|\omega^\prime\cap\pts_i|\leq A |\pts_i\cap\omega|(D/D_i)^{-6+\eps}\leq AA_i|\pts| D_i^{-6}|(D/D_i)^{-6+\eps}\leq A|\pts|D^{-6+\eps};
$$
and 
$$
\sum_{\omega\in\Omega}\sum_{\omega^\prime\in\Omega_\omega}|\mathcal{Z}_{\omega^\prime}| \leq \sum_{\omega\in\Omega}A(D/D_i)^{4+\eps}|\mathcal{Z}_\omega| \leq A(D/D_i)^{4+\eps} A_iD_i^4|\mathcal{Z}|\leq A D^{4+\eps}|\mathcal{Z}|.
$$

We also obtain a set of incidences $I_{\omega^\prime}\subset I(\pts_{\omega^\prime},\mathcal{Z}_{\omega^\prime})$. We have
\begin{equation}\label{extraIncidences}
\begin{split}
&\sum_{\omega\in\Omega}\sum_{\omega^\prime\in\Omega_{\omega}}|I_{\omega^\prime}|\leq \sum_{\omega\in\Omega}\sum_{\omega^\prime\in\Omega_\omega}A(D/D_i)^{\eps}|\pts_{\omega^\prime}|\\
&\leq AD^{\eps}D_i^{-\eps}|\pts_i^\prime|\\
&\leq \frac{A}{2}D^{\eps}|\pts_i^\prime|.
\end{split}
\end{equation}

Define 
$$
J^\prime_i = J_i \cup \bigcup_{\omega\in\Omega}\bigcup_{\omega^\prime\in\Omega_\omega}I_{\omega^\prime}.
$$ 
We have
$$
|J^\prime_i|\leq AD^{\eps}|\pts_i^\prime|.
$$ 

Finally, applying the induction hypothesis gives us a set of points $\pts^*_{\omega^\prime}\subset\pts\cap\omega^\prime$. We have
\begin{equation}
\begin{split}
\sum_{\omega\in\Omega}\sum_{\omega^\prime\in\Omega_\omega}|\pts^*_{\omega^\prime}|&\leq\sum_{\omega\in\Omega}\sum_{\omega^\prime\in\Omega_\omega}A(D/D_i)^{6+\eps}\\
&\leq A A_i D_i^6(D/D_i)^{6+\eps}\\
&\leq \frac{A}{6} D^{6+\eps}.
\end{split}
\end{equation}

Define 
$$
\pts^*_i = \bigcup_{\omega\in\Omega_i}\bigcup_{\omega^\prime\in\Omega_\omega}\pts^*_{\omega^\prime}.
$$ 

Recall that we performed the above steps for each $i=0,\ldots,5$ with $\dim W_i\geq 1$. If instead $\dim(W_i)= 0$, then let $i_0=i$ and let $\pts_{i_0}^*=\pts_{i_0}$. Note that if $A$ is chosen sufficiently large (depending only on $D_1,\ldots,D_{i_0}$, which in turn depends only on $\eps$), then $|\pts^*_{i_0}|\leq A/6$.

We are now ready to combine the cells from each of the partitionings described above. Define 
$$
\Omega = \bigcup_{i=0}^{i_0}\bigcup_{\omega\in\Omega_i}\Omega_\omega.
$$
For each $\omega^\prime\in\Omega$, we have a set $\mathcal{Z}_{\omega^\prime}$ of double-spheres. Define

$$
\pts^* = \bigcup_{i=1}^{i_0} \pts^*_i.
$$

Let
$$
I_0 = J_0^\prime\cup\ldots\cup J_{i_0-1}^\prime.
$$

We have $|\Omega|\leq AD^{6+\eps}$,  $|I_0|\leq \sum_{i=1}^{i_0-1} AD^{\eps}|\pts_i^\prime|\leq AD^{\eps}|\pts|$, and $|\pts^*|\leq A D^{6+\eps}$. Thus the sets $\Omega,I_0,$ and $\{\mathcal{Z}_{\omega}\}_{\omega\in\Omega}$ satisfy the conclusions of the lemma. 
\end{proof}

\subsection{Proof of Lemma \ref{circlePartitioningLem}}
We now have the necessary tools to prove Lemma \ref{circlePartitioningLem}. For the reader's convenience, we will recall it here.

\begin{circlePartitioningLemLem}
Let $0\leq\alpha< 1/2$. For each $\eps>0$, there is a constant $A$ so that the following is true. Let $\pts\subset\RR^3$ be a set of $m$ points and let $\circles$ be a set of $n>4^{1/(1-2\alpha)}$ circles in $\RR^3$, no two of which are co-planar\footnote{see Remark \ref{coplanarRemark}}. %Suppose that each circle is incident to roughly the same number of points, i.e.~for each pair of circles $C,C^\prime\in\mathcal{C}$, we have $|\pts\cap C|\leq 2|\pts\cap C^\prime|$

Then for each $E\geq 1$, at least one of the following three things must hold:\\
\begin{itemize}
\item[(A)] (The circles are contained in a small number of spheres): There is a set $\mathcal{C}_{(A)}\subset \mathcal{C}$ of size at least $n/\log n$, and a set $\mathcal{S}$ of spheres with $|\mathcal{S}|\leq n^\alpha$. For each $S\in\mathcal{S}$, there is a set $\circles(S)\subset\circles_{(A)}$ of circles contained in $S$ so that the sets $\{\circles(S)\}_{S\in\mathcal{S}}$ are disjoint and $|\circles(S)|\leq 2n|\mathcal{S}|^{-1}$ for each $S\in\mathcal{S}$. Finally,
\begin{equation}\tag{\ref{incidencesSumOverSpheres}}
I(\pts,\circles_{(A)})=  \sum_{S\in\mathcal{S}}I(\pts, \circles(S)).
\end{equation}

\item[(B)] (The circles are contained in an intermediate number of spheres): There is a set $\mathcal{C}_{(B)}\subset \mathcal{C}$ of size at least 
\begin{equation}\tag{\ref{sizeOfCB}}
|\mathcal{C}_{(B)}|\geq E^{-\eps}n-E^{6+2\eps},
\end{equation}
a number $1\leq F<E$, and a set $\Omega$ of at most $AF^{6+\eps}$ ``cells.'' For each cell $\omega\in\Omega$, there is a set $\pts_\omega\subset\pts$ of points, a set $\circles_\omega\subset\circles_{(B)}$ of circles with $|\circles_\omega|\leq AF^{-6+\eps}n$, and a set $\mathcal{S}_\omega$ of spheres with $|\mathcal{S}_\omega|\leq |\circles_\omega|^{\alpha}$. For each $S\in\mathcal{S}_\omega$, there is a set $\circles(S)\subset\circles_\omega$ of circles contained in $S$.

The sets $\{\circles_\omega\}_{\omega\in\Omega}$ are disjoint and for each $\omega\in\Omega$, the sets $\{\circles(S)\}_{S\in\mathcal{S}_\omega}$ are disjoint. The sets $\{\pts_\omega\}_{\omega\in\Omega}$ are not disjoint, but 
$$
\sum_{\omega\in\Omega} |\pts_\omega|\leq A F^{4+\eps}m.
$$ 
For each $\omega\in\Omega$ and each $S\in \mathcal{S}_\omega,$ we have 
$
|\circles(S)|\leq A |\circles_\omega|^{1-\alpha}.
$
Finally,
\begin{equation}\tag{\ref{incidencesSumOverCellsSpheres}}
I(\pts,\circles_{(B)})\leq \sum_{\omega\in\Omega}\sum_{S\in\mathcal{S}_\omega}I(\pts_\omega,\circles(S)) + AE^{\eps}n.
\end{equation}

\item[(C)] (The circles strongly avoid being contained in spheres): There is a set $\mathcal{C}_{(C)}\subset \mathcal{C}$ of size at least $E^{-\eps}n-AE^{6+2\eps}$, and a set $\Omega$ of at most $AE^{6+\eps}$ ``cells.'' For each cell $\omega\in\Omega$ there is a set $\pts_\omega$ of points and a set $\circles_\omega$ of circles. The sets $\{\circles_\omega\}$ are disjoint, and $|\circles_\omega|\leq AE^{-6+\eps}n$ for each $\omega\in\Omega$. The sets $\{\pts_\omega\}$ are not disjoint, but $\sum_\Omega |\pts_\omega|\leq AE^{4+\eps}m$. For each $\omega\in \Omega$, at most $|\circles_{\omega}|^{1-\alpha}$ circles from $\circles_\omega$ can be contained in a common sphere. We also have

\begin{equation}\tag{\ref{incidencesSumNoSpheres}}
I(\pts,\circles)\leq \sum_{\omega\in\Omega}I(\pts_\omega,\circles_\omega)+ AE^{\eps}n.
\end{equation}
\end{itemize}
\end{circlePartitioningLemLem}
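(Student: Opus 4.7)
The plan is a stopping-time argument across dyadic scales $F \in \{1, 2, 4, \ldots, E\}$, driven by the dual-space partitioning Lemma~\ref{partitionCirclesLem} and the duality of Section~\ref{dualSpacePtsCirclesSec} (circles in $\circles$ become points in $\RR^6$, points in $\pts$ become double-spheres $Z_q$, and incidences are preserved).

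At every stage the same ``clustering test'' is used. Given $\circles'\subset\circles$ of size $n'$, bucket the spheres of $\RR^3$ dyadically by the number of circles of $\circles'$ they contain. If any dyadic bucket of richness $\geq (n')^{1-\alpha}$ carries $\geq n'/\log n'$ circles, pigeonhole to a single such bucket, let $\mathcal{S}$ be its spheres (automatically of size $\leq(n')^\alpha$ since each sphere has richness $\geq(n')^{1-\alpha}$), assign each covered circle to one containing sphere in $\mathcal{S}$, and report a disjoint clustering with $|\circles(S)|\leq 2n'/|\mathcal{S}|$; one further dyadic pigeonhole at the upper end of the bucket gives the sharper $|\circles(S)|\leq A(n')^{1-\alpha}$ required by Case~(B).

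Apply this test first with $\circles'=\circles$; if it succeeds, declare Case~(A). Otherwise, loop: for $F=2,4,\ldots,E$, apply Lemma~\ref{partitionCirclesLem} with $D=F$ to the $n$ dual points and the $m$ double-spheres, obtaining $\leq AF^{6+\eps}$ cells with $|\circles_\omega|\leq AF^{-6+\eps}n$ and $\sum_\omega|\pts_\omega|\leq AF^{4+\eps}m$, at the cost of $\leq AF^{6+\eps}$ discarded circles and $\leq AF^\eps n$ excess incidences. Run the local clustering test on each $\circles_\omega$. As soon as the cells where the test succeeds collectively carry $\geq \tfrac12 E^{-\eps}n$ circles, stop and declare Case~(B) with this $F$, taking $\mathcal{C}_{(B)}$ to be the union of the $\circles(S)$ produced. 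If the loop terminates at $F=E$ without firing, declare Case~(C) with $\mathcal{C}_{(C)}$ the circles in free cells; in each such cell the failure of the test at \emph{every} dyadic bucket above $|\circles_\omega|^{1-\alpha}$, combined with the fact that two distinct spheres share at most one circle (so at most $O(|\circles_\omega|^\alpha)$ spheres can be of richness $\geq|\circles_\omega|^{1-\alpha}$ by inclusion--exclusion), upgrades to the pointwise bound that no sphere contains more than $|\circles_\omega|^{1-\alpha}$ circles of $\circles_\omega$.

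The hard part will be the bookkeeping: (i) the lower bounds on $|\mathcal{C}_{(B)}|$ and $|\mathcal{C}_{(C)}|$ must absorb the $O(F^{6+\eps})$ discards summed over the $O(\log E)$ scales, which forces the stopping threshold to be strictly below $n$ and is the source of the $-E^{6+2\eps}$ and $-AE^{6+2\eps}$ subtractions in the statement; (ii) the additive $AE^\eps n$ term in \eqref{incidencesSumOverCellsSpheres} and \eqref{incidencesSumNoSpheres} must dominate $\sum_{F\leq E}AF^\eps n$, which costs only a $\log E$ factor that can be absorbed into $\eps$; and (iii) the inclusion--exclusion upgrade in the free cells relies on the hypothesis $n>4^{1/(1-2\alpha)}$, which provides the separation $(n')^{1-\alpha}>4(n')^\alpha$ needed for the pairwise-intersection terms not to swamp the first-order count.
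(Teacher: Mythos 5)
Your overall architecture — duality into $\RR^6$, partitioning via Lemma~\ref{partitionCirclesLem}, a stopping-time loop over scales, dyadic pigeonholing of rich spheres, and inclusion--exclusion to bound the number of rich spheres using $n > 4^{1/(1-2\alpha)}$ — matches the paper's. But there is a genuine gap in how you reach Case~(C).

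Your ``clustering test'' asks whether some dyadic richness bucket above $(n')^{1-\alpha}$ carries at least $n'/\log n'$ of the circles. Failure of that test at every bucket does \emph{not} give the pointwise conclusion ``no sphere contains more than $|\circles_\omega|^{1-\alpha}$ circles of $\circles_\omega$.'' A single sphere can have richness, say, $|\circles_\omega|^{1-\alpha/2}$: that is far above the $(1-\alpha)$-threshold, yet the corresponding bucket need only carry $|\circles_\omega|^{1-\alpha/2}\cdot O(1)$ circles, which can be well below $n'/\log n'$, so the test fails while the pointwise bound is violated. The inclusion--exclusion bound on the \emph{number} of rich spheres doesn't help here; it controls $|\mathcal{S}|$, not the maximum richness of any one sphere. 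The paper sidesteps this by \emph{removing} the circles lying in $\geq 2|\tilde\circles_\omega|^{1-\alpha}$-rich spheres inside each cell at every iteration (this is $\circles_\omega$ as defined in \eqref{caseNotBDefnCirclesOmega}), so the richness bound holds by construction for the survivors, not by deduction from a failed test. Relatedly, the paper does \emph{nested} refinement with a fixed bounded parameter $G$ over $L\approx\log_G E$ rounds, precisely so that this pruning within cells can be maintained as an invariant (\eqref{notTooManyCirclesInSphereInCell}) across rounds. Your plan re-runs Lemma~\ref{partitionCirclesLem} from scratch at each $F$, which discards the information about which circles were already filtered out and makes it impossible to propagate the richness invariant into the final cells. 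To fix the argument along your lines, you would need to (a) prune circles in rich spheres from each cell before proceeding, and (b) partition the pruned collections recursively rather than re-partitioning the whole set, which is exactly what the paper does.
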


\begin{proof}
Let $\mathcal{S}_0$ be the set of spheres satisfying $|\circles_S|\geq |\circles|^{1-\alpha}$. Since $\alpha<1/2$ we can use the inclusion-exclusion principle to show that that $|\mathcal{S}_0|< 2n^\alpha$. Indeed, suppose there exists a set $\mathcal{S}_1\subset\mathcal{S}_0$ of cardinality $2n^{\alpha}$. Since each pair of spheres can contain at most one common circle, we have
\begin{equation*}
\begin{split}
2n & \leq \sum_{S\in\mathcal{S}_1}|\circles_S|\\
&\leq \Big|\bigcup_{S\in\mathcal{S}_1}\circles_S \Big|+\sum_{S_1,S_2\in\mathcal{S}_1}|\circles_{S_1}\cap\circles_{S_2}|\\
&\leq n + (2n^{\alpha})^2\\
&<2n,  
\end{split}
\end{equation*}
where on the last line we used the fact that $n> 4^{1/(1-2\alpha)}$. Since this is a contradiction, we conclude that $|\mathcal{S}_0|< 2n^\alpha$.

 We will first consider the case where 
 \begin{equation}\label{halfCirclesTrappedInZ0}
 \Big|\bigcup_{S\in\mathcal{S}_0}\circles_S\Big|\geq\frac{1}{4}|\circles|.
\end{equation}
Assign each circle in $\bigcup_{S\in\mathcal{S}_0}\circles_S$ uniquely to one set $\circles(S)\subset \circles_S$. After dyadically pigeonholing the varieties $S\in\mathcal{S}_0$, there is a number $t$ and a set of varieties $\mathcal{S}\subset\mathcal{S}_0$ so that $t\leq |\circles(S)|<2t$ for each $S\in\mathcal{S}$ and $\sum_{S\in\mathcal{S}}|\circles(S)|\geq n/\log n$. Let $\circles_{(A)}=\bigcup_{S\in\mathcal{S}}\circles(S)$. We conclude that Case (A) holds.

Now suppose that \eqref{halfCirclesTrappedInZ0} fails. Let 
\begin{equation}\label{defnCircles0}
\circles_0=\circles\backslash \bigcup_{S\in\mathcal{S}_0}\circles_S.
\end{equation}

Let $G$ be a large parameter to be determined later and let $L$ be the smallest integer with $G^L\geq E$. Define $\Omega_0=\{\RR^6\}$ and define $\circles_{\RR^6}=\circles_0,$ where $\circles_0$ is the set from \eqref{defnCircles0}. Define $\pts_{\RR^6}=\pts$. Thus we have defined sets $\circles_{\omega}$ and $\pts_\omega$ for each $\omega\in\Omega_0$.

Let $A_1$ be the constant from Lemma \ref{partitionCirclesLem} corresponding to $\eps$. Observe that if $i=0$ then
\begin{align}
&|\Omega_i|\leq A_1^i G^{(6+\eps)i}.\label{correctNumberCells}\\
&\textrm{The sets}\ \{\circles_\omega\}_{\omega\in\Omega}\ \textrm{are disjoint}.\\
&\sum_{\omega\in\Omega}|\circles_\omega|\geq 2^{-i-1}n-2^iA_1^i G^{(6+\eps)i}.\label{mostCirclesPreserved}\\
& A_1^{-i}G^{(-6+\eps)i}n \leq |\circles_\omega|\leq A_1^{i}G^{(-6+\eps)i}n\ \textrm{for each}\ \omega\in\Omega_i.\label{correctNumberCirclesPerCell}\\
&\sum_{\omega\in\Omega_i}|\pts_\omega|\leq A_1^i G^{(4+\eps)i}m.\label{correctNumberPtsPerCell}\\
&\textrm{For each}\ \omega\in\Omega\ \textrm{and for each sphere}\ S,\ |(\circles_{\omega})_S|\leq|\circles_\omega|^{1-\alpha}.\label{notTooManyCirclesInSphereInCell}\\
&\Big|\mathcal{I}(\pts,\circles_i)\backslash \bigcup_{\omega\in\Omega}\mathcal{I}(\pts_\omega,\circles_\omega)\Big|\leq i(A_1G^\eps)n\label{mostIncidencesCapturedPerCell}.
\end{align}

Now let $0\leq i \leq L$. We will describe the induction step. Suppose that $\Omega_i$ is a set of cells, and that for each $\omega\in\Omega_{i}$ there is a set of points $\pts_\omega$ and $\circles_\omega$ so that the conditions \eqref{correctNumberCells}--\eqref{mostIncidencesCapturedPerCell} hold.

For each $\omega\in\Omega_i$, let $\mathcal{Q}_\omega\subset\RR^6$ be the set of points associated to the circles in $\circles_\omega$ using the duality described in Section \ref{dualSpacePtsCirclesSec}. If $C$ is a circle, we will use $q_C\in\RR^6$ to denote the corresponding point. Let $\mathcal{Z}_{\omega}=\{Z_p\colon p\in\pts_{\omega}\}$. Use Lemma \ref{partitionCirclesLem} to partition $\mathcal{Q}_\omega$ into $\leq A G^{6+\eps}$ cells; call this set of cells $\tilde\Omega_\omega$. For each $\omega\in\Omega_{i}$ and each $\omega^\prime\in\tilde\Omega_\omega$, define 
\begin{equation*}
\begin{split}
\tilde\circles_{\omega^\prime}&=\{C\in\circles_{\omega}\colon q_C \in\omega^\prime\},\\
\pts_{\omega^\prime}&=\{p\in\pts_{\omega}\colon Z_p\in(\mathcal{Z}_{\omega})_{\omega^\prime}\},
\end{split}
\end{equation*}
where $(\mathcal{Z}_{\omega})_{\omega^\prime}=\{Z\in\mathcal{Z}_\omega\colon Z\cap\omega^\prime\neq\emptyset\}$.

From \eqref{sizeOfPStar} from Lemma \ref{partitionCirclesLem}, for each $\omega\in\Omega_i$, we have 
$$
\sum_{\omega^\prime\in\tilde\Omega_\omega}|\tilde\circles_{\omega^\prime}|\geq |\circles_{\omega}|-A_1 G^{6+\eps}.
$$

Let 
$$
\Omega_{\omega}=\{\omega^\prime\in\tilde\Omega_\omega\colon |\tilde \circles_{\omega^\prime}|\geq (4 A G^{6+\eps})^{-1}|\circles_{\Omega_\omega}|\}.
$$
Since each set $\Omega_\omega$ contains at most $AG^{6+\eps}$ cells, we have that

\begin{equation}\label{fewCirclesLostInSmallCell}
\sum_{\omega^\prime\in\Omega_\omega}|\tilde\circles_{\omega^\prime}|\geq \frac{3}{4}|\circles_{\omega}|-A_1 G^{6+\eps}.
\end{equation}

Define
$$
\Omega_{i+1}=\bigcup_{\omega\in\Omega_i}\Omega_\omega.
$$
We have that the sets $\{\circles_{\omega}\}_{\omega\in\Omega_{i+1}}$ are disjoint, and 
\begin{equation}\label{sumCirclesOverAllCells}
\begin{split}
&\sum_{\omega^\prime\in\Omega_{i+1}}|\tilde\circles_{\omega^\prime}|\\
&\geq \Big(\sum_{\omega\in\Omega}\frac{3}{4}|\circles_{\omega}|\Big)-(A_1 G^{6+\eps})(2^iA_1^iG^{(6+\eps)i}) \\
&\geq \frac{3}{4}\ 2^{-i-1}n-2^iA_1^i G^{(6+\eps)i}-2^iA_1^{i+1}G^{(6+\eps)(i+1)}\\
&\geq \frac{3}{4}\ 2^{-i-1}n-2^{i+1}A_1^{i+1} G^{(6+\eps)(i+1)}.
\end{split}
\end{equation}
For each $\omega\in\Omega_{i+1}$,
$$
|\tilde\circles_{\omega}|\leq (A_1 G^{-6+\eps})(A_1^i G^{(-6+\eps)i}n) \leq A_1^{i+1}G^{(6+\eps)(i+1)}n,
$$
and 
$$
\sum_{\omega\in\Omega_{i+1}}|\pts_{\omega}|\leq \sum_{\omega\in\Omega_i}\sum_{\omega^\prime\in\Omega_\omega}|\pts_{\omega^\prime}|\leq\sum_{\omega\in\Omega_i} A_1G^{4+\eps}|\pts_\omega|\leq (A_1G^{4+\eps})(A_1^i G^{(4+\eps)i}m)\leq A_1^{i+1}G^{(4+\eps)(i+1)}m.
$$

Thus \eqref{correctNumberCells}, \eqref{correctNumberCirclesPerCell}, and \eqref{correctNumberPtsPerCell} are satisfied. Furthermore by \eqref{boundOnI0} from Lemma \ref{partitionCirclesLem}  we have,

\begin{equation}\label{caseBMostIncidencesCaptured}
\begin{split}
&\Big| I(\pts,\circles_i)\backslash \bigcup_{\omega\in\Omega_{i+1}}I(\pts,\tilde\circles_\omega)\Big| \\
& \leq \Big| I(\pts,\circles_i)\backslash \bigcup_{\omega\in\Omega_{i}}I(\pts,\circles_\omega)\Big|+\sum_{\omega\in\Omega_i}\Big|I(\pts,\circles_\omega)\backslash \bigcup_{\omega^\prime\in\Omega_\omega} I(\pts,\tilde\circles_{\omega^\prime})\Big|\\
&\leq iAG^{\eps}n + \sum_{\omega\in\Omega_i}AG^{\eps}|\circles_\omega|\\
&\leq (i+1)AG^{\eps}n.
\end{split}
\end{equation}

For each $\omega\in\Omega_{i+1},$ let $\mathcal{S}_\omega$ be the set of spheres satisfying
\begin{equation}\label{defnOfZOmega}
|S_{\tilde\circles_\omega}|\geq2|\tilde\circles_{\omega}|^{1-\alpha}.
\end{equation}
(Recall that $S_{\tilde\circles_\omega}=\{C\in \tilde\circles_\omega\colon C\subset S\}$). Suppose that
\begin{equation}\label{mostCirclesTrappedInRichSpheres}
\sum_{\omega\in\Omega_{i+1}}\Big|\bigcup_{S\in \mathcal{S}_\omega}S_{\tilde\circles_\omega}\Big|\geq \frac{1}{4}\sum_{\omega\in\Omega_{i+1}}|\tilde\circles_\omega|.
\end{equation}
We are now in Case (B). Let $F=G^{i+1}$ and let $\Omega=\Omega_{i+1}$. For each $\omega\in\Omega$, let 
$$
\circles_\omega=\bigcup_{S\in\mathcal{S}_\omega}S_{\tilde\circles_\omega}.
$$

Let 
$$
\mathcal{C}_{(B)}=\bigcup_{\omega\in\Omega_{i+1}}\circles_\omega.
$$
We have that 
\begin{equation}
\begin{split}
|\mathcal{C}_{(B)}|&\geq\frac{1}{4}\sum_{\omega\in\Omega_{i+1}}|\circles_\omega|\\
&\geq 2^{-i-4}|\circles|-2^{i}A^{i} G^{(6+\eps)(i)}\\
&\geq n 2^{-L-4} - (2AG^{6+\eps})^{L}\\
&\geq nE^{-\eps}-E^{6+2\eps},
\end{split}
\end{equation}
provided $G$ is selected sufficiently large (depending on $A_1$ and $\eps$) so that 
$$
(2A_1)^L\leq E^{\log (2A_1) / \log G} \leq E^{\eps}. 
$$
Thus \eqref{sizeOfCB} holds.

For each $\omega\in\Omega$ and each $S\in\mathcal{S}_\omega$, select a set $\circles(S)\subset S_{\circles_\omega}$ so that the sets $\{\circles(S)\}$ are disjoint and their union is $\circles_\omega$. Using the inclusion-exclusion principle (the same argument used at the beginning of the proof of Lemma \ref{circlePartitioningLem}) shows that $|\mathcal{S}_\omega|\leq |\circles_\omega|^{\alpha}$. 

Note that 
\begin{equation}\label{circlesSHasCorrectSize}
|\circles(S)|\leq 4A_1G^{6+\eps} |\circles_{\omega}|^{1-\alpha}\leq A|\circles_{\omega}|^{1-\alpha},
\end{equation}
since the parent cell of $\omega$ satisfies \eqref{notTooManyCirclesInSphereInCell}, and $|\circles_{\omega}|\geq (4A_1G^{6+\eps})^{-1}|\circles_{\omega_p}|,$ where $\omega_p$ is the parent cell of $\omega$.

It remains to verify that  \eqref{incidencesSumOverCellsSpheres}  holds. But
\begin{equation*}
\begin{split}
\Big|&\mathcal{I}(\pts,\circles_{(B)})\backslash \bigcup_{\omega\in\Omega}\bigcup_{S\in\mathcal{S}_\omega}\mathcal{I}(\pts_\omega,\circles(S))\Big| \\
&\leq \Big|\mathcal{I}(\pts,\circles_i)\backslash \bigcup_{\omega\in\Omega}\mathcal{I}(\pts_\omega,\tilde\circles_\omega)\Big|\\
&\leq (i+1)(A_1G^\eps)n\\
&\leq \frac{\log E}{\log G}(A_1G^{\eps})n\\
&\leq AE^{\eps}n,
\end{split}
\end{equation*}
where on the third line we used \eqref{caseBMostIncidencesCaptured}. This concludes the analysis of Case (B). 

Suppose instead that \eqref{mostCirclesTrappedInRichSpheres} fails. This means we are not in Case (B) and we need to continue the induction process. For each $\omega\in\Omega_{i+1}$, let 
\begin{equation}\label{caseNotBDefnCirclesOmega}
\circles_{\omega}=\{C\in\omega\colon C\not\subset S\ \textrm{for any}\ S\in\mathcal{S}_\omega\}.
\end{equation}
By \eqref{sumCirclesOverAllCells} and the failure of \eqref{mostCirclesTrappedInRichSpheres}, we have 
\begin{equation*}
\begin{split}
\sum_{\omega\in\Omega}|\circles_\omega|&\geq\frac{3}{4}\sum_{\omega\in\Omega}|\tilde\circles_\omega|\\
&\geq \big(\frac{3}{4}\big)^2\ 2^{-i-1}n-2^{i}A_1^{i+1} G^{(6+\eps)(i+1)}\\
&\geq 2^{-(i+1)-1}n-2^{i}A_1^{i+1} G^{(6+\eps)(i+1)},
\end{split}
\end{equation*}
and thus \eqref{mostCirclesPreserved} holds. By \eqref{caseBMostIncidencesCaptured} we have that \eqref{mostIncidencesCapturedPerCell} holds. By the definition of $\circles_{\omega}$ from \eqref{caseNotBDefnCirclesOmega}, we have that \eqref{notTooManyCirclesInSphereInCell} also holds. 

We repeat this procedure for $i=1,\ldots,L$. If we are never in Case (B), then let $\Omega=\Omega_L$ and let $\circles_{(C)}=\circles_L$. The sets $\Omega,$ $\{\circles_\omega\}_{\omega\in\Omega}$ and $\{\pts_\omega\}_{\omega\in\Omega}$ satisfy the requirements of Case (C). 
\end{proof}

\section{Incidences between points and unit spheres}
In this section we will prove a bound on the number of incidences between $n$ points and $n$ unit spheres in $\RR^3$. Up to a multiplicative constant, this is equivalent to bounding the number of unit distances spanned by $n$ points in $\RR^3$.

% First, we must deal with an easy degenerate case
% \begin{lem}\label{fewPointSphereIncidencesIfFewArbSpheres}
% Let $\pts$ be a set of $m$ points and let $\spheres$ be a set of $n$ unit spheres in $\RR^3$. Suppose that the there exists a set $\mathcal{S}_0$ of at most $m^{1/4}$ (arbitrary) spheres so that $\pts$ is contained in $\bigcup_{S\in\mathcal{S}_0}S$. Then
% $$
% I(\pts,\spheres)\lessapprox m^{13/22}n^{9/11}+m^{3/4}n^{2/3}+m+n.
% $$
% \end{lem}
% \begin{proof}
% For each dyadic number number $m^{3/4}\leq k \leq m$, we have that the number of spheres from $\spheres_0$ containing between $k$ and $2k$ points from $\pts$ is $O(m/k)$. Denote this set of spheres by $\spheres_k$. Note that each sphere from $\spheres$ interesects each sphere from $\spheres_k$ in a circle. Thus for each dyadic $k$, we have $|\spheres_k|=O(m/k)$ incidence problems, each involving at most $2k$ points and at most $|\spheres|=n$ circles. Using the point-circle incidence bound from Theorem , we obtain
% \begin{equation}
% \begin{split}
% I(\pts,\spheres)&\leq\sum_{k\ \textrm{dyadic}}\sum_{S\in\mathcal{S}_k}I(\pts\cap S,\ \{S^\prime \cap S \colon S^\prime\in\spheres \})\\
% &\lesssim \sum_{\substack{m^{3/4}\leq k\leq m\\ k\ \textrm{dyadic}}}(m/k)\big( k^{6/11}n^{9/11} + k^{2/3}n^{2/3} + k + n\big)\\
% &\lesssim \sum_{\substack{m^{3/4}\leq k\leq m\\ k\ \textrm{dyadic}}} \big(m k^{-5/11}n^{9/11} + mk^{-1/3}n^{2/3}+m+n\big)\\
% &\lessapprox m^{1-(3/4)(5/11)}n^{9/11}+m^{1-(3/4)(1/3)}n^{2/3}+m+n\\
% &=m^{13/22}n^{9/11}+m^{3/4}n^{2/3}+m+n.
% \end{split}
% \end{equation} 
% \end{proof}

Our bound will depend on three parameters that we get to choose, which we will call $\alpha,\ D,$ and $E$. In Section \ref{optimizeParamsSec}, we will determine the optimal values of these parameters. Intuitively, however, the reader should think of $\alpha$ as being close to $1/3$, $D$ as being close to $n^{1/4}$, and $E$ as being close to $n^{1/10}$. In particular, we will require that $0\leq\alpha <1/2$.

\begin{lem}
Let $\pts$ be a set of $m$ points and let $\spheres$ be a set of $n$ unit spheres in $\RR^3$. Let $1\leq D\leq n^{1/3}$ be an integer. Then there exists a set $\circles_0$ of $O(n^2D^{-3})$ circles so that at most $n$ circles lie on a common plane or (arbitrary) sphere, and
\begin{equation}
I(\pts,\spheres)\leq I(\mathcal{Q},\circles_0)+ O(D^2n) + O(D^4),
\end{equation}
where $\mathcal{Q}\subset\RR^3$ is the set of centers of the spheres in $\spheres.$
\end{lem}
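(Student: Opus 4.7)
The plan is to apply polynomial partitioning to $\pts$ and build $\circles_0$ from pairwise intersections of unit spheres, exploiting the duality $p\in S_q\Leftrightarrow q\in S_p$. The key geometric observation is that a center $q$ lies on the circle $C_{p,p'}:=S_p\cap S_{p'}$ precisely when $|q-p|=|q-p'|=1$, i.e., both $p$ and $p'$ are incident to $S_q$. Since the incidence count is symmetric in $\pts$ and $\mathcal{Q}$, we may assume $m\leq n$ after swapping their roles if necessary.

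Invoke Theorem \ref{projectedPartitioning} with $d=3$, $e=0$, $f=3$ to obtain a polynomial $P$ of degree $D$ such that $\RR^3\setminus Z(P)$ is a union of $O(D^3)$ open cells, each containing $O(m/D^3)$ points of $\pts$. Define $\circles_0:=\{S_p\cap S_{p'}:\{p,p'\}\subset\pts\cap\omega\text{ for some cell }\omega\}$; then $|\circles_0|\leq\sum_{\omega}\binom{|\pts\cap\omega|}{2}=O(m^2D^{-3})=O(n^2D^{-3})$. For each $q\in\mathcal{Q}$ and each cell $\omega$, set $k_\omega(q):=|S_q\cap\pts\cap\omega|$; by construction, exactly $\binom{k_\omega(q)}{2}$ circles of $\circles_0$ coming from pairs in $\omega$ pass through $q$. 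Since $k\leq\binom{k}{2}+1$ whenever $k\geq 1$, summing over $(q,\omega)$ gives
\begin{equation*}
I(\pts\setminus Z(P),\spheres)=\sum_{q,\omega}k_\omega(q)\leq I(\mathcal{Q},\circles_0)+\#\{(q,\omega):S_q\cap\omega\neq\emptyset\}\leq I(\mathcal{Q},\circles_0)+O(nD^2),
\end{equation*}
where the final bound uses Theorem \ref{numberOfConnectedComponents}: each unit sphere, a surface of degree $2$, meets $O(D^2)$ cells.

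The main obstacle is bounding the contribution from $\pts\cap Z(P)$ by $O(D^4)$. A crude K\H{o}v\'ari-S\'os-Tur\'an estimate (using the $K_{3,2}$-freeness of point--unit-sphere incidences) only yields $O(m^{2/3}n^{2/3})$, which is $\leq O(D^4)$ only when $D\sim n^{1/3}$; for smaller $D$ one applies a secondary partitioning within each irreducible component of $Z(P)$ via Theorem \ref{partitioningOnAVariety}, or separately handles spheres meeting $Z(P)$ in curves of high degree, to achieve the $O(D^4)$ bound (which is in any case dominated by $O(D^2n)$ since $D\leq n^{1/3}$). Finally, the non-degeneracy condition follows from the fact that at most two unit spheres pass through any given circle: if $C_{p,p'}\subset S$ then $S_{p'}$ must contain the fixed circle $S\cap S_p$, pinning $p'$ to at most two choices per $p$, so at most $m\leq n$ circles of $\circles_0$ lie on any common plane or sphere.
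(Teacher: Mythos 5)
Your proposal matches the paper's overall strategy: partition $\pts$ with a degree-$D$ polynomial, form $\circles_0$ from pairs of points within a common cell, and use the duality $p,p'\in S_q\Leftrightarrow q\in S_p\cap S_{p'}$ together with the ``at most two unit spheres through a circle'' fact. The cell-interior count (via $k\leq\binom{k}{2}+1$ and Barone--Basu for the $O(nD^2)$ sphere--cell term) and the non-degeneracy argument for $\circles_0$ are essentially the paper's. However, there is a genuine gap in the treatment of $\pts\cap Z(P)$, and a related misconception about what the secondary partitioning is supposed to accomplish.

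You frame the issue as ``bounding the contribution from $\pts\cap Z(P)$ by $O(D^4)$,'' and suggest that Theorem \ref{partitioningOnAVariety} or a degree argument could achieve this. But $I(\pts\cap Z(P),\spheres)$ need not be $O(D^4)$ or even $O(D^2n)$ at all: if, say, all $m$ points lie on one irreducible component of $Z(P)$ and a large fraction of the $n$ spheres are rich on it, this term is $\Theta(mn)$, which dwarfs $D^4$ for small $D$. The paper's secondary partitioning is not a patch to bound a left-over error term; it is an integral part of the \emph{definition} of $\circles_0$. The author re-partitions the points on each irreducible component $Z(P_i)$ (using Theorem \ref{partitioningOnAVariety} when $m_i D^3\geq mD_i^3$, and an ambient $\RR^3$ partitioning otherwise) so that \emph{all} points -- on or off $Z(P)$ -- land in cells of size $O(m/D^3)$, and circles are formed from pairs in \emph{all} these cells. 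The degrees are chosen so that $\sum_i D_iE_i=O(D^2)$ and $\sum_i E_i^2=O(D^2)$, keeping the total cell count at $O(D^3)$, the circle count at $O(n^2D^{-3})$, and the sphere--cell interaction count at $O(D^2n)$. The only points still unaccounted for are on $Z(P)_{\operatorname{sing}}$ and the secondary cutting loci $Z(P_i)\cap Z(Q_i)$ (a union of curves of total degree $O(D^2)$), and those are what produce the $O(D^2n)+O(D^4)$ error. Your proposal stops at the hand-wave; this re-partitioning and the attendant degree bookkeeping is the actual content that needs to be supplied.

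Two smaller points. First, the ``swap $\pts$ and $\mathcal{Q}$'' reduction does not prove the lemma as stated: after swapping, the circles are formed from pairs of \emph{centers}, and the conclusion becomes $I(\pts,\circles_0')$ rather than $I(\mathcal{Q},\circles_0)$, which is not what the lemma asserts (the lemma is used only with $m=n$, which is why the paper can partition $\pts$ and still claim $|\circles_0|=O(n^2D^{-3})$). Second, the inequality $\binom{k}{2}=\#\{\text{circles of }\circles_0\text{ through }q\text{ from }\omega\}$ is not exact: pairs with $S_p\cap S_{p'}$ degenerate contribute to the binomial but not to $I(\mathcal{Q},\circles_0)$, so an extra additive term of order the number of such pairs (which the paper's proof records as $O(n^2D^{-3})$ before the displayed lemma bound) should appear; your accounting silently drops it.
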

\begin{proof}
Apply Theorem \ref{projectedPartitioning} (with $d=3,e=0,f=3$) with parameter $D$ to obtain a polynomial $P\in\RR[x_1,x_2,x_3]$ that partitions $\RR^3$ into $O(D^3)$ cells, each of which contains $O(nD^{-3})$ points from $\pts$. Call this set of cells $\Omega_0$.

Factor $Z(P)$ into irreducible components. Applying Lemma \ref{irreducibleIdealLem} if necessary, we can assume that each component generates a real ideal. For each irreducible component $P_i$, let $D_i=\deg(P_i)$, let $\pts_i=\pts\cap Z(P_i)\backslash Z(P)_{\operatorname{sing}}$, and let $m_i=|\pts_i|$. Let $\mathcal{I}_1$ be the set of indices $i$ satisfying $m_iD^3 \geq mD_i^3$ and let $\mathcal{I}_2$ be the remaining indices.

For each $i\in \mathcal{I}_1$, use Theorem \ref{partitioningOnAVariety} to select a partitioning polynomial $Q_i$ of degree 
$$
E_i=\Big\lfloor\frac{m_iD^3}{mD_i}\Big\rfloor^{1/2}.
$$
We have that $\pts_i$ is partitioned into $O(D_iE_i^2)=O(D^3 m_i/m)$ cells, each of which contains $O(mD^{-3})$ points. Call this set of cells $\Omega_i$. Note that 
$$
\sum_{i\in\mathcal{I}_1}D_iE_i\leq D^{3/2}m^{-1/2}\sum_i D_i^{1/2}m_i^{1/2}\leq D^{3/2}m^{-1/2}\Big(\sum_i D_i\Big)^{1/2}\Big(\sum_i m_i\Big)^{1/2} \leq D^2.
$$ 

For each $i\in \mathcal{I}_2$, use Theorem \ref{projectedPartitioning} (with $d=3,e=0,f=3$) to find a partitioning polynomial $Q_i$ of degree 
$$
E_i = \big\lfloor D(m_i/m)^{1/3}\big\rfloor.
$$
Note that $i\in\mathcal{I}_2$ implies that $E_i<D_i$, and thus the polynomial $Q_i$ does not vanish identically on $Z(P_i)$. Note that if $m_i < m/D^3$ then $E_i=0$. This means that the polynomial $Q_i$ is constant (i.e. $Q_i=1$ will suffice), so $Z(Q_i)=\emptyset$. 

If $E_i\geq 1$, we have that $\pts_i$ is partitioned into $O(E_i^3)$ cells, with $O(m_i/E_i^3)=O(m/D^3)$ points per cell. Similarly, if $E_i=0$, then $\pts_i$ is ``partitioned'' into one cell, which contains $O(m_i)=O(m/D^3)$ points. 

Call this set of cells $\Omega_i$. Note that 
$$
\sum_{i\in\mathcal{I}_2} E_i^2 \leq \sum_i D_i^2 \leq D^2.
$$ 

Let $\Omega=\Omega_0\cup\bigcup_{i\in\mathcal{I}_1}\Omega_i\cup\bigcup_{i\in\mathcal{I}_2}\Omega_i$. We have $|\Omega|=O(D^3)$;  $|\pts\cap\omega|=O(D^{-3}n)$ for each $\omega\in\Omega$;
\begin{equation}\label{sphereCellInteractions}
\sum_{\omega\in\Omega}|\mathcal{S}_\omega|\leq  CD^2 n
\end{equation}
and 
$$
I(\pts,\mathcal{S})\leq \sum_{\omega\in\Omega}I(\pts\cap\omega, \mathcal{S}_\omega)+O(D^2n) + O(D^4).
$$
%where $\mathcal{S}_\omega= \{S\in \mathcal{S}\colon S\cap\omega\neq\emptyset\}.$ 

From \eqref{sphereCellInteractions}, we have that
\begin{equation}\label{spheresToCircles}
I(\pts,\mathcal{S})\leq \sum_{\omega\in\Omega}|\{(p,q,S)\colon S\in\mathcal{S}, p,q\in \pts\cap\omega\cap S\} |+O(D^2n) + O(D^4).
\end{equation}

For each cell $\omega\in\Omega$ and each pair of points $p,q\in\omega$ with $|p-q|<1$, define $C_{p,q}$ to be the circle $S_p\cap S_q$. Define $\circles_0 = \bigcup_{\omega}\{C_{p,q}\colon p,q\in\omega\}$ and define  $\mathcal{Q}=\{q\colon S_q\in\spheres\}.$ We have

\begin{equation}\label{circlePointIncidences}
\sum_{\omega}|\{(p,q,S)\colon p,q\in\omega\cap S\}| \leq I(\mathcal{Q},\circles_0)+O(n^2 D^{-3}),
\end{equation}
where the $O(n^2 D^{-3})$ term account for pairs $(p,q,S)$ with $p,q\in \pts\cap\omega\cap S$ and $|p-q|=1$; for each of the $O(D^3)$ cells $\omega$, there are $O(n^2D^{-6})$ pairs $p,q\in\omega$ with $|p-q|=1$. Each of these pairs can contribute at most two triples $(p,q,S)$ to the above sum. 

Finally, note that for each plane or (arbitray) sphere $Z$ and each $p\in\pts$, there is at most one additional point $q\in\pts$ with $S_p\cap Z = S_q\cap Z$. Thus each plane or (arbitrary) sphere can contain at most $n$ circles from $\circles_0$.
\end{proof}

We will now continue with the proof of Theorem \ref{mainThm}. After applying an inversion around a suitably chosen sphere, we can assume without loss of generality that no pair of circles in $\mathcal{C}_0$ are coplanar (if two circles are co-planar before the inversion is applied, they will be co-spherical afterward. This is merely a notational convenience so we can refer to ``spheres,'' rather than ``planes and spheres''). Use dyadic pigeonholing to find a set $\circles\subset\circles_0$ and a number $t$ so that $t\leq |C\cap\mathcal{Q}|\leq 2t$ for each $C\in\circles$ and 
$$
I(\mathcal{Q},\circles_0) \leq (\log n) I(\mathcal{Q},\circles).
$$ 

Our goal is now to bound $I(\mathcal{Q},\circles)$. Note that since each circle in $\circles$ is incident to roughly the same number of points (up to a factor of two), it suffices to bound $I(\mathcal{Q},\circles^\prime)$, where $\circles^\prime\subset\circles$ is a set of circles with cardinality comparable to that of $\circles$. This observation will be used several times in the arguments that follow.

Define $N=|\circles| = O(n^2D^{-3})$. Let $\eps>0$. Apply Lemma \ref{circlePartitioningLem} to $\mathcal{Q}$ and $\circles$ with parameters $0\leq\alpha\leq 1/2,\ \eps,$ and $1\leq E \leq A^{-1}n^{1/6-2\eps}$. At least one of Cases (A), (B), or (C) must occur. We will deal with each of these situations in turn.

\subsection{Case (A)}\label{CaseASec}
Let $\circles_{(A)}$, $\mathcal{Z}$, and $\{\circles(S)\}_{S\in\mathcal{Z}}$ be the output from Lemma \ref{circlePartitioningLem}. After diadic pigeonholing, we can find a set $\mathcal{Q}^\prime\subset\mathcal{Q}$ so that 
\begin{equation}\label{QPrimeBoundsQ}
I(\mathcal{Q},\mathcal{C})\lessapprox_{\eps}I(\mathcal{Q}^\prime,\mathcal{C}), 
\end{equation}
and each point in $\mathcal{Q}^\prime$ is incident to roughly the same number of circles from $\mathcal{C}_{(A)}$ (up to a factor of two). Note that it will (likely) no longer be the case that each circle from $\mathcal{C}_{(A)}$ is incident to the same number of points, but this will not be a problem. After further dyadic pigeonholing, we can assume that each sphere in $\mathcal{Z}$ contains roughly the same number of points from $\mathcal{Q}^\prime$ (up to a factor of two).

Let $\mathcal{Z}_1$ be the set of spheres in $\mathcal{Z}$ that are $1/2$-degenerate (in the sense of Definition \ref{defnNonDegenerate}) with respect to the point set $\mathcal{Q}^\prime$, and let $\mathcal{Z}_2$ be the remaining spheres. Let $\circles_{(A),1})\subset\circles_{(A)}$ be the set of circles contained in spheres from $\mathcal{Z}_1$, and let $\circles_{(A),2})\subset\circles_{(A)}$ be the set of circles contained in spheres from $\mathcal{Z}_2$

For each $S\in\mathcal{Z}_1$, let $C_S$ be a circle that contains at least half the points from $S$. $C_S$ need not be an element of $\circles(S)$. Since each point in $\mathcal{Q}^\prime$ is incident to roughly the same number of circles, for each sphere $S\in\mathcal{Z}_1$ it suffices to bound the number of incidences between the circles from $\circles(S)$ and the points in $\mathcal{Q}^\prime\cap C_S$. Each circle $C\in\circles(S)$ with $C\neq C_S$ can be incident to at most two points from $\mathcal{Q}^\prime \cap C_S$. If $C_S\in\circles(S)$, then this circle can contribute an additional $n$ points. Using \eqref{QPrimeBoundsQ}, we have
\begin{equation}\label{contribDegenerate}
I(\mathcal{Q}, \circles_{(A),1})\leq |\mathcal{Z}|n\leq n^{1+\alpha}.
\end{equation}

We will now bound the number of incidences between the points in $\mathcal{Q}^\prime$ and the circles in $\circles_{(A),2}.$

\subsubsection{Case (A.1): $|\mathcal{Z}_2|\leq n^{1/4}$} 
By Theorem \ref{richSphereThm} applied to the set $\mathcal{Q}$, each of the spheres in $\mathcal{Z}_2$ contains $O(n|\mathcal{Z}|^{-1})$ points from $\mathcal{Q}^\prime$. We also have that each sphere contains $\leq n$ circles from $\circles$. 

Applying the bound from Theorem \ref{pointCircleBound} to the $1/2$-non-degenerate spheres, we obtain
\begin{equation}\label{caseA1}
\begin{split}
I(\mathcal{Q},\circles_{(A),2})&\lessapprox |\mathcal{Z}|\Big(\big(n/|\mathcal{Z}|\big)^{6/11}n^{9/11}+\big(n/|\mathcal{Z}|\big)^{2/3}n^{2/3}+\big(n/|\mathcal{Z}|\big)+n\Big)\\
&\lessapprox |\mathcal{Z}|^{5/11}n^{15/11}+|\mathcal{Z}|^{1/3}n^{4/3}+|\mathcal{Z}|n\\
&\lessapprox n^{(1/4)(5/11) + 15/11}+n^{1/4\cdot 1/3+4/3}+n^{1/4+1}\\
&\lessapprox n^{65/44}+n^{17/12}+n^{5/4}\sim n^{1.47727}.
\end{split}
\end{equation}

\subsubsection{Case (A.2): $n^{1/4}\leq |\mathcal{Z}_2|\leq n^{\alpha}$} By Theorem \ref{richSphereThm}, each of the spheres in $\mathcal{Z}_2$ contains $O(n^{4/5}|\mathcal{Z}_2|^{-1/5})$ points from $\mathcal{Q}$, and $\lessapprox_\eps N/|\mathcal{Z}_2| \lessapprox_\eps n^2D^{-3}/|\mathcal{Z}_2|$ circles. We apply Theorem \ref{pointCircleBound} and obtain
\begin{equation}\label{caseA2}
\begin{split}
I(\mathcal{Q},\circles_{(A),2})&\lessapprox_\eps|\mathcal{Z}_2|\Big(\big(n^{4/5}|\mathcal{Z}_2|^{-1/5}\big)^{6/11}\big(n^2D^{-3}/|\mathcal{Z}_2|\big)^{9/11}+\big(n^{4/5}|\mathcal{Z}_2|^{-1/5}\big)^{2/3}\big(n^2D^{-3}/|\mathcal{Z}_2|\big)^{2/3}\\
&\phantom{|\mathcal{Z}_2|\Big(}\ \ \  +\big(n^{4/5}|\mathcal{Z}_2|^{-1/5}\big)+\big(n^2D^{-3}/|\mathcal{Z}_2|\big)\Big)\\
&\leq |\mathcal{Z}_2|^{4/55}n^{114/55}D^{-27/11}+|\mathcal{Z}_2|^{1/5}n^{28/15}D^{-2}+ n^{4/5}|\mathcal{Z}_2|^{4/5} + n^2D^{-3}\\
&\leq n^{114/55+(4/55)\alpha}D^{-27/11}+n^{28/15+(1/5)\alpha}D^{-2}+ n^{4/5+(4/5)\alpha} + n^2D^{-3}.
\end{split}
\end{equation}

\subsection{Case (B)}\label{caseBSection}
Let $\circles_{(B)}$, $F$, $\Omega$, $\{\mathcal{Q}_\omega\}_{\omega\in\Omega}$, $\{\mathcal{S}_\omega\}_{\omega\in\Omega}$, and $\{ \circles(S)\}_{\omega\in\Omega,\ S\in \mathcal{S}_\omega}$ be the output from Lemma \ref{circlePartitioningLem}. First, we can assume that $|\mathcal{Q}_\omega|\lessapprox_{\eps} M_1 = nF^{-2}$ for each cell $\omega\in\Omega$. This is because $\sum_{\omega\in\Omega}|\mathcal{Q}_\omega|\lessapprox_{\eps}nF^4$, so we can discard the cells for which $|\mathcal{Q}_\omega|\geq n^{A_1\eps} n F^{-2}$. If $A_1$ is selected sufficiently large, then at most $AF^{-6-A_1\eps/2}$ of the cells from $\Omega$ are discarded. Since each discarded cell contains at most $AF^{-6+\eps}n$ circles from $\circles_{(B)}$, at most half the circles from $\circles_{(B)}$ lie in cells that are discarded. Since each circle is incident to roughly the same number of points (up to a factor of two), this process only affects the number of incidences by a multiplicative factor of four. After further pigeonholing, inside each cell, we can select a set $\mathcal{Q}_\omega^\prime\subset\mathcal{Q}_\omega$ so that each point in $\mathcal{Q}_\omega^\prime$ is incident to roughly the same number of circles from $\circles_\omega$. Again, after this process has been performed it is likely that the circles will no longer each be incident to the same number of points, but this will not be a problem for us.

Next, after dyadic pigeonholing the (non-discarded) cells in $\Omega$ and the circles $C\in\circles_\omega$, we can ensure that each cell in $\Omega$ contains $N_1 \lessapprox_{\eps} n^2D^{-3}F^{-6}$ circles (up to a multiplicative factor of two); these circles are evenly distributed (up to a multiplicative factor of two) over $K \lessapprox_\eps (n^2D^{-3}F^{-6})^{\alpha}=n^{2\alpha}D^{-3\alpha}F^{-6\alpha}$ disjoint spheres, each of which contains roughly (again, up to a factor of two) the same number of points. We perform this pigeonholing so that the total number of incidences is reduced by (at most) a factor of $\lessapprox_{\eps}1$. Of course it is still the case that each of these spheres contains $\lessapprox_\eps n^{2(1-\alpha)}D^{-3(1-\alpha)}F^{-6(1-\alpha)}$ circles, since each sphere contains at most $AN_2^{1-\alpha}$ circles, where $N_2$ is the number of circles originally present in the cell. 

Let $\mathcal{S}_{\omega,1}\subset\mathcal{S}_{\omega}$ be the set of spheres that are $1/2$-degenerate (in the sense of Definition \ref{defnNonDegenerate}) with respect to the point set $\mathcal{Q}_\omega^\prime,$ and let $\mathcal{S}_{\omega,2}$ be the remaining spheres. Let $\circles_{(B),1}$ be the circles contained in spheres from $\mathcal{S}_{\omega,1}$ (as $\omega$ ranges over the cells in $\Omega$), and let $\circles_{(B),2}$ be the remaining circles.

For each $S\in\mathcal{S}_1$, let $C_S\subset S$ be a circle that contains at least half the points of $\mathcal{Q}_{\omega}^\prime\cap S$; it suffices to bound the number of incidences between circles in $\circles(S)$ and points in $\mathcal{Q}_{\omega}^\prime\cap C_S$.  Since each circle in $\circles(S)$ that is distinct from $C_S$ can intersect $C_S$ in at most two places, we have 
$$
\sum_{\omega\in\Omega}\sum_{S\in\mathcal{S}_{\omega,1}}I\big(\circles(S)\backslash\{C_S\},\ \mathcal{Q}_{\omega}^\prime\cap C_S\big)\lessapprox F^6K(N_1/K)\lessapprox n^2D^{-3}.
$$

It remains to bound the incidences between circles of the form $\{C_S\colon S\in\mathcal{S}_{w,1}\}$ and the points in $\mathcal{Q}_\omega^\prime$. We have 
\begin{equation*}
\begin{split}
&\sum_{\omega\in\Omega}I\Big(\bigcup_{S\in\mathcal{S}_{\omega,1}}\{C_S\},\ \mathcal{Q}_{\omega^\prime}\Big)\\
&\qquad\lessapprox F^6(M_1^{6/11}K^{9/11}+M_1^{2/3}K^{2/3}+M_1+K)\\
&\qquad\lessapprox
F^6\Big( (nF^{-2})^{6/11}(n^{2\alpha}D^{-3\alpha}F^{-6\alpha})^{9/11}+(nF^{-2})^{2/3}(n^{2\alpha}D^{-3\alpha}F^{-6\alpha})^{2/3}+(nF^{-2})+(n^{2\alpha}D^{-3\alpha}F^{-6\alpha})\Big)\\
&\qquad\lessapprox E^{54/11 - (54/11) \alpha}n^{6/11 + (18/11)\alpha}D^{(-27/11)\alpha}+E^{14/3 - 4 \alpha}n^{2/3+(4/3)\alpha}D^{-2\alpha}+nE^4+E^{6-6\alpha}n^{2\alpha}D^{-3\alpha}.
\end{split}
\end{equation*}

Thus
\begin{equation}\label{contribDegenerateB}
\begin{split}
I(\mathcal{Q}, \mathcal{C}_{(B),1})&\lessapprox n^2D^{-3}+E^{54/11 - (54/11) \alpha}n^{6/11 + (18/11)\alpha}D^{(-27/11)\alpha}\\
&+E^{14/3 - 4 \alpha}n^{2/3+(4/3)\alpha}D^{-2\alpha}+nE^4+E^{6-6\alpha}n^{2\alpha}D^{-3\alpha}.
\end{split}
\end{equation}

We will now bound incidences between points and circles in $\circles_{(B),2}$.

By Theorem \ref{richSphereThm}, for each cell $\omega\in\Omega$, each sphere in $\mathcal{S}_{\omega,2}$ contains 
\begin{equation}\label{numberPtsPerSphereCaseB}
%\begin{split}
\lessapprox_\eps \max\Big(M_1/K,\ M_1^{4/5}K^{-1/5}\Big)
%&\lessapprox_{\eps}\max\Big(n^{1-2\alpha} F^{-2+6\alpha} D^{3\alpha},\ n^{4/5-(2/5)\alpha}F^{-8/5+(6/5)\alpha}D^{(3/5)\alpha}\Big)
%\end{split}
\end{equation}
points from $\mathcal{Q}$.

\subsection{Case (B.1): First term of \eqref{numberPtsPerSphereCaseB} dominates}
The total number of incidences is
\begin{equation}\label{caseB1}
\begin{split}
I(\mathcal{Q},\circles_{(B),2})&\lessapprox_\eps F^6 K \Big( (n F^{-2}K^{-1})^{6/11}
(n^{2(1-\alpha)}D^{-3(1-\alpha)}F^{-6(1-\alpha)})^{9/11}\\
&\qquad\qquad\qquad\qquad\qquad+(nF^{-2}K^{-1})^{2/3}(n^{2(1-\alpha)}D^{-3(1-\alpha)}F^{-6(1-\alpha)})^{2/3}\\
&\qquad\qquad\qquad\qquad\qquad+(nF^{-2}K^{-1}) +(n^{2(1-\alpha)}D^{-3(1-\alpha)}F^{-6(1-\alpha)})\Big)\\
%
%&\lessapprox_\eps F^6 (n^{2\alpha}D^{-3\alpha}F^{-6\alpha}) \Big( (n^{1-2\alpha} F^{-2+6\alpha} D^{3\alpha})^{6/11}
%(n^{2(1-\alpha)}D^{-3(1-\alpha)}F^{-6(1-\alpha)})^{9/11}\\
%&\qquad\qquad\qquad\qquad\qquad+(n^{1-2\alpha} F^{-2+6\alpha} D^{3\alpha})^{2/3}(n^{2(1-\alpha)}D^{-3(1-\alpha)}F^{-6(1-\alpha)})^{2/3}\\
%&\qquad\qquad\qquad\qquad\qquad+(n^{1-2\alpha} F^{-2+6\alpha} D^{3\alpha}) +(n^{2(1-\alpha)}D^{-3(1-\alpha)}F^{-6(1-\alpha)})\Big)\\
%
&\lessapprox_\eps E^{(24/11)\alpha}n^{24/11-(8/11)\alpha}D^{-27/11+(12/11)\alpha}\\
&\qquad+E^{2/3 + 2\alpha}n^{2-(2/3)\alpha}D^{-2+\alpha}+E^4n+n^{2}D^{-3}.
\end{split}
\end{equation}

\subsection{Case (B.2): Second term of \eqref{numberPtsPerSphereCaseB} dominates}
The total number of incidences is
\begin{equation}\label{caseB2}
\begin{split}
I&(\mathcal{Q},\circles_{(B),2})\\
&\lessapprox_\eps F^6 K \Big( (n^{4/5}F^{-8/5}K^{-1/5})^{6/11}
(n^{2(1-\alpha)}D^{-3(1-\alpha)}F^{-6(1-\alpha)})^{9/11}\\
&\qquad\qquad\qquad\qquad\qquad+(n^{4/5}F^{-8/5}K^{-1/5})^{2/3}(n^{2(1-\alpha)}D^{-3(1-\alpha)}F^{-6(1-\alpha)})^{2/3}\\
&\qquad\qquad\qquad\qquad\qquad+(n^{4/5}F^{-8/5}K^{-1/5}) +(n^{2(1-\alpha)}D^{-3(1-\alpha)}F^{-6(1-\alpha)})\Big)\\
&\leq F^{12/55 - (24/55)\alpha}n^{114/55 + (8/55)\alpha}D^{-27/11 - (12/55)\alpha}+F^{14/15 - (6/5)\alpha}n^{28/15 + (2/5)\alpha}D^{-2 - (3/5)\alpha}\\
&\ \ +F^{14/5 - (24/5)\alpha}n^{4/5 + (8/5)\alpha}D^{-(12/5)\alpha}+n^2D^{-3}\\
&\leq E^{12/55 - (24/55)\alpha}n^{114/55 + (8/55)\alpha}D^{-27/11 - (12/55)\alpha}+E^{14/15 - (6/5)\alpha}n^{28/15 + (2/5)\alpha}D^{-2 - (3/5)\alpha}\\
&\ \ +E^{22/5 - (24/5)\alpha}n^{4/5 + (8/5)\alpha}D^{-(12/5)\alpha}+n^2D^{-3}.
\end{split}
\end{equation}
\subsection{ Case (C)} 
Let $\Omega,$ $\{\mathcal{Q}_\omega\}_{\omega\in\Omega}$, and $\{\circles_{\omega}\}_{\omega\in\Omega}$ be the output from Lemma \ref{circlePartitioningLem}. Recall that for each $\omega\in\Omega,$ we have $|\circles_\omega|\lessapprox_{\eps}n^2D^{-3}E^{-6}$. An identical argument to that in Section \ref{caseBSection} allows us to assume that for each $\omega\in\Omega$, we have $|\mathcal{Q}_\omega|\lessapprox_{\eps}nE^{-2}$. Furthermore, for each $\omega\in\Omega$, we have that at most $\lessapprox_{\eps}\Big(n^2D^{-3}E^{-6}\Big)^{1-\alpha}=n^{2-2\alpha}D^{-3+3\alpha}E^{-6+6\alpha}$ circles from $\circles_{\omega}$ lie in a common plane or sphere. We apply the cutting from Corollary \ref{cuttingCirclesR3} to the circles in each cell; this gives us 
$$
\lessapprox_{\eps}\Big(n^2D^{-3}E^{-6}\Big)^{4/3} + \Big(n^2D^{-3}E^{-6}\Big)^{3/2-\alpha/2} = n^{8/3}D^{-4}E^{-8}+n^{3-\alpha}D^{-9/2+3\alpha/2}E^{-9+3\alpha}
$$ 
pseudo-segments,  and apply Lemma \ref{pointPseudosegmentIncidencesLem} to the resulting set of pseudo-segments. We conclude that the total number of incidences is at most 
\begin{equation}\label{caseC}
\begin{split}
I&(\mathcal{Q},\circles_{(C)})\\
&\lessapprox_\eps E^6\Big( (nE^{-2})^{1/2}(n^2D^{-3}E^{-6})^{3/4}+(nE^{-2})^{2/3}(n^2D^{-3}E^{-6})^{1/3}(n^{2-2\alpha}D^{-3+3\alpha}E^{-6+6\alpha})^{1/3}\\
&\quad\qquad+ (nE^{-2}) + n^{8/3}D^{-4}E^{-8}+n^{3-\alpha}D^{-9/2+2\alpha/2}E^{-9+3\alpha} \Big)\\
&\lessapprox_\eps E^{1/2}n^{2}D^{-9/4}  +   E^{2/3 + 2\alpha}D^{-2+\alpha}n^{2 - (2/3)\alpha}+E^4n\\
&\qquad\qquad\qquad\qquad+n^{8/3}D^{-4}E^{-2} + n^{3-\alpha}D^{-9/2+3\alpha/2}E^{-3+3\alpha}.
\end{split}
\end{equation}

\subsection{Optimizing the parameters}\label{optimizeParamsSec}
We must now select values of $\alpha,D$, and $E$ that minimize the number of point-circle incidences that come from \eqref{spheresToCircles}, \eqref{caseA1}, \eqref{caseA2}, \eqref{caseB1}, \eqref{caseB2}, and \eqref{caseC}. We can phrase this as a convex optimization problem---define $\beta$ and $\delta$ so that $D= n^\beta,\ E = n^\delta$. Our goal is now to minimize the maximum of the following terms
\begin{center}
\begin{tabular}[h!]{|c|c|}
\hline
From \eqref{spheresToCircles} \& \eqref{circlePointIncidences} : & From \eqref{contribDegenerate}:\\
\\[-1em]
$1+2\beta$ & $1+\alpha$\\
\\[-1em]
$2-3\beta$ & \\
\\[-1em]
\hline
From \eqref{caseA1}: & From \eqref{caseA2}:\\ 
\\[-1em]
$\frac{65}{44}$ & 
$\frac{114}{55}+\frac{4}{55}\alpha - \frac{27}{11}\beta$\\
\\[-1em]
& $\frac{28}{15}+\frac{1}{5}\alpha - 2\beta$  \\
\\[-1em]
&$\frac{4}{5}+\frac{4}{5}\alpha$\\
\\[-1em]
&$2-3\beta$\\
\hline
$\qquad\qquad\qquad\qquad$from \eqref{contribDegenerateB}:$\qquad\qquad\qquad\qquad$ & $\qquad\qquad\ \ \ $from \eqref{caseB1}:$\qquad\qquad\ \ \ $\\  % 
\\[-1em]
%& $\frac{24}{11}-\frac{8}{11}\alpha -\frac{27}{11}\beta+\frac{12}{11}\alpha\beta$\\
%\\[-1em]
$2 - 3\beta$ & 
$\frac{24}{11}\alpha\delta + \frac{24}{11}-\frac{8}{11}\alpha -\frac{27}{11}\beta+\frac{12}{11}\alpha\beta$\\
\\[-1em]
$6/11 + (18/11)\alpha - (27/11)\alpha\beta+(54/11)\delta - (54/11) \alpha\delta$ & 
$\frac{2}{3}\delta + 2\alpha\delta + 2 -\frac{2}{3}\alpha -2 \beta +\alpha\beta$\\
\\[-1em]
$2/3+(4/3)\alpha - 2\alpha\beta+(14/3)\delta - 4 \alpha\delta$ 
& $1+4\delta$\\
\\[-1em]
$1+4\delta$ & 
$2-3\beta$\\
\\[-1em]
$2\alpha -3\alpha\beta+6\delta-6\alpha\delta$\\
% \hline
% \end{tabular}
% \end{center}

% \begin{center}
% \begin{tabular}[h!]{|c|c|}
\hline
From \eqref{caseB2}: & From \eqref{caseC}:\\
\\[-1em]
$\frac{12}{55}\delta - \frac{24}{55}\alpha\delta+\frac{114}{55} + \frac{8}{55}\alpha -\frac{27}{11}\beta - \frac{12}{55}\alpha\beta$ &
$\frac{1}{2}\delta+2-\frac{9}{4}\beta$\\
\\[-1em]
$\frac{14}{15}\delta - \frac{6}{5}\alpha\delta+  \frac{28}{15} + \frac{2}{5}\alpha -2\beta - \frac{3}{5}\alpha\beta$ & 
$\frac{2}{3}\delta + 2\alpha\delta -2\beta+\alpha\beta +2 - \frac{2}{3}\alpha$\\
\\[-1em]
 $\frac{22}{5}\delta - \frac{24}{5}\alpha\delta + \frac{4}{5} + \frac{8}{5}\alpha -\frac{12}{5}\alpha\beta$&
$4\delta+1$\\
\\[-1em]
$2 -3\beta$ &
$\frac{8}{3} -4\beta -2\delta$\\
\\[-1em]
 &
$3-\alpha -\frac{9}{2}\beta+\frac{3}{2}\alpha\beta -3\delta+3\alpha\delta$\\
\hline
\end{tabular}
\end{center}
%TODO: label : convexOptimization
subject to the constraints 
\begin{equation}\label{constraints}
0\leq\alpha\leq1/2\quad\textrm{and}\quad0\leq\delta\leq 1/3-\beta/2. 
\end{equation}
We can verify that 
\begin{equation}
\begin{split}
\alpha&= 5/17,\\
\beta&= 49/197,\ \textrm{and}\\
\delta&= 37/394
\end{split}
\end{equation}
satisfies the constraints from \eqref{constraints} (these values of $\alpha$, $\beta$ and $\delta$ were found using convex optimization). When these values of $\alpha,\beta,$ and $\delta$ are inserted into the equations from the above table, we conclude that
$$
I(\mathcal{Q},\circles)\lessapprox_\eps n^{295/197}.
$$

This concludes the proof of Theorem \ref{mainThm}.

\begin{rem}
The following terms from the above bounds achieve the maximum bound of $n^{295/197}$. The terms are in the format ``(equation): bound.''
\begin{itemize}
\item[\eqref{spheresToCircles}:] $nD^2$.
\item[\eqref{caseB1}:] $E^{(24/11)\alpha}n^{24/11-(8/11)\alpha}D^{-27/11+(12/11)\alpha}$  and $E^{2/3 + 2\alpha}n^{2-(2/3)\alpha}D^{-2+\alpha}$.
\item[\eqref{caseB2}:] $E^{12/55 - (24/55)\alpha}n^{114/55 + (8/55)\alpha}D^{-27/11 - (12/55)\alpha}$ and $E^{14/15 - (6/5)\alpha}n^{28/15 + (2/5)\alpha}D^{-2 - (3/5)\alpha}$.
\item[\eqref{caseC}:]$E^{2/3 + 2\alpha}D^{-2+\alpha}n^{2 - (2/3)\alpha}$ and $n^{3-\alpha}D^{-9/2+3\alpha/2}E^{-3+3\alpha}$.
\end{itemize} 
\end{rem}

\begin{rem}
The above proof could be generalized to bound the number of incidences between $m$ points and $n$ unit spheres with $m\neq n$. However, the number of terms to be considered in the analogues of Equations \eqref{spheresToCircles}, \eqref{caseA1}, \eqref{caseA2}, \eqref{caseB1}, \eqref{caseB2}, and \eqref{caseC} would increase substantially.
\end{rem}

\section{Point-circle incidences in $\RR^3$}
Implicit in the proof of Theorem \ref{mainThm} are several new incidence bounds for points and circles in $\RR^3$. We will describe one of these bounds in greater detail. It will be convenient to introduce the following notation. We say $F = O^*(G)$ if $F = O_{\eps}(n^{\eps}G)$ for every $\eps>0$. 

\begin{thm}
Let $\pts$ be a set of $m$ points and let $\circles$ be a set of $n$ circles in $\RR^3$. Suppose that at most $q$ circles are contained in a common plane or sphere. Then the number of point-circle incidences is
\begin{equation}\label{pointCircleIncidenceBd}
O^*\big(m^{1/2}n^{3/4}+m^{2/3}n^{13/15}+m^{1/3}n^{8/9} + nq^{2/3} + m\big).
\end{equation}
\end{thm}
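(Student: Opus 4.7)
The plan is to adapt the machinery from the proof of Theorem \ref{mainThm}, dropping the initial reduction from unit spheres to circles (which is specific to the unit-distance setting) and instead starting directly from the point-circle setup. Concretely, I would apply Lemma \ref{circlePartitioningLem} to $(\pts,\circles)$ with two parameters $\alpha\in[0,1/2)$ and $E\geq 1$ that will be optimized at the end. The lemma produces one of three cases, labeled (A), (B), and (C), and each must be bounded separately.

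For Case (A), the circles lie in a set $\mathcal{S}$ of spheres of size at most $n^{\alpha}$, each containing at most $\min(|\circles|^{1-\alpha},q)$ of the relevant circles. I would split $\mathcal{S}$ into the $\tfrac12$-degenerate and $\tfrac12$-non-degenerate spheres in the sense of Definition \ref{defnNonDegenerate}. On each degenerate sphere, a single circle absorbs at least half of the sphere's points, so after pigeonholing to normalize the number of incidences per point (as in Section \ref{CaseASec}), every other circle contributes at most two incidences with that dominant circle; this regime contributes the crude term $O^*(n^{1+\alpha})$. On each non-degenerate sphere, Theorem \ref{richSphereThm} caps the point count on that sphere and Theorem \ref{pointCircleBound} applied on the sphere (via stereographic projection, which sends circles to circles) bounds the per-sphere incidence count. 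Summing over spheres—using Hölder to handle the $m_S^{6/11}q_S^{9/11}$, $m_S^{2/3}q_S^{2/3}$, $m_S$, and $q_S$ addends of Theorem \ref{pointCircleBound} one at a time—produces the contributions responsible for the $nq^{2/3}$, $m^{2/3}n^{13/15}$, and $m^{1/3}n^{8/9}$ terms of the target bound.

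For Cases (B) and (C), the circles are distributed across a cell set $\Omega$ with controlled per-cell counts. Within each cell $\omega$, I would apply Corollary \ref{cuttingCirclesR3} to cut $\circles_\omega$ into pseudo-segments, using $B_\omega=\min(|\circles_\omega|^{1-\alpha},q)$ for the bound on circles in a common plane or sphere. I would then apply Lemma \ref{pointPseudosegmentIncidencesLem} inside the cell to bound $I(\pts_\omega,\circles_\omega)$, and sum over $\omega\in\Omega$ using the bounds $|\circles_\omega|\lessapprox_{\eps} E^{-6}n$ and $|\pts_\omega|\lessapprox_{\eps} E^{-2}m$ (after dyadic pigeonholing) provided by Lemma \ref{circlePartitioningLem}. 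The cell sums give the $m^{1/2}n^{3/4}$ term (from the Guth--Katz-style ingredient in Lemma \ref{pointPseudosegmentIncidencesLem}) and the trivial $m$ term, together with further contributions to the other three terms.

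Collecting the bounds from every case yields a maximum over polynomial expressions in $m,n,q,E,n^{\alpha}$. Writing $E=n^{\delta}$ and recasting the minimum-of-max as a convex program exactly analogous to the one at the end of Section \ref{optimizeParamsSec}, the parameters $(\alpha,\delta)$ can be chosen so that the maximum reduces to $O^*(m^{1/2}n^{3/4}+m^{2/3}n^{13/15}+m^{1/3}n^{8/9}+nq^{2/3}+m)$. The main obstacle is the bookkeeping: each of the five stated terms must be traced back to a specific addend in a specific case, and the $nq^{2/3}$ term in particular requires careful accounting in Case (A), where the per-sphere circle count $q$ interacts with the sphere count $|\mathcal{S}|\leq n/q$ and with the points-per-sphere bound coming from Theorem \ref{richSphereThm}. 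Once this optimization is verified, the stated bound is immediate.
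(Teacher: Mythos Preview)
Your proposal follows a different and considerably more laborious route than the paper's. The paper does not analyze Cases (A) and (B) at all: instead it chooses $\alpha$ just below $1/2$ and restricts $E$ so that $E\le n^{1/6}q^{-1/3}$. With this choice every cell $\omega$ satisfies $|\circles_\omega|\gtrsim nE^{-6}\ge q^2$, hence $|\circles_\omega|^{1-\alpha}\gtrsim q$; since by hypothesis no sphere contains more than $q$ circles, no sphere is ever ``rich'' in the sense of Lemma~\ref{circlePartitioningLem}, so the stopping-time argument runs all the way to Case (C). One then applies Corollary~\ref{cuttingCirclesR3} and Lemma~\ref{pointPseudosegmentIncidencesLem} in each cell exactly once, obtaining the three-term expression $E^{1/2}m^{1/2}n^{3/4}+E^4m+E^{-2}n^{4/3}$, and the stated bound drops out by taking $E$ as in \eqref{valueOfE}. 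The $nq^{2/3}$ term arises simply because the constraint $E\le n^{1/6}q^{-1/3}$ can bind, forcing the $E^{-2}n^{4/3}$ term to equal $nq^{2/3}$.

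Your plan to handle all three cases and then run a convex optimization in $(\alpha,\delta)$ may well succeed, but it replicates the entire apparatus of Sections~\ref{CaseASec}--\ref{optimizeParamsSec} for a problem where a one-line parameter choice sidesteps it. The per-sphere analysis you sketch for Case~(A), invoking Theorems~\ref{richSphereThm} and~\ref{pointCircleBound} and then H\"older-summing, is plausible but would need to be carried out carefully to land on exactly the exponents $13/15$ and $8/9$; the paper's route produces these directly from balancing the Case~(C) terms against each other and against the $q$-constraint. In short: the paper trades your case analysis for a single observation about how the hypothesis ``at most $q$ circles per sphere'' interacts with the richness threshold in Lemma~\ref{circlePartitioningLem}.
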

If $q$ is small, then this is superior to the previous best-known bound of
$$
O^*\big(m^{3/7}n^{6/7}+m^{2/3}n^{1/2}q^{1/6}+m^{6/11}n^{15/22}q^{3/22}+m+n\big).
$$
from \cite{SSZ2}.

\begin{proof}
After dyadic pigeonholing, we can assume that each circle in $\mathcal{C}$ is incident to approximately the same number of points from $\pts$. Apply Lemma \ref{circlePartitioningLem} with $\alpha = 1/2+\eps$ and 
\begin{equation}\label{valueOfE}
E=\min\Big(n^{1/6}q^{-1/3},\ n^{7/30}m^{-1/5},\  n^{2/9}m^{-1/6},\ 1\Big).
\end{equation}
Since $E\leq n^{1/6}q^{-1/3}$, Case (C) must occur. Thus we obtain a set $\Omega$ of $O^*(E^6)$ cells, with $O^*(nE^{-6})$ circles in each cell, and $\sum_{\omega\in\Omega}|\pts_\omega| = O^*(E^4m).$ 

Note that $q= O\big((nE^{-6})^{1/2}\big) = O\big((nE^{-6})^{2/3}\big)$. Thus we can apply Lemma \ref{cuttingCirclesR3} to cut the circles in each cell into $O\big((nE^{-6})^{4/3+\eps}\big) = O(n^{4/3+\eps}E^{-8+6\eps})$ pseudo-segments. Apply Lemma \ref{pointPseudosegmentIncidencesLem} to the points and pseudo-segments inside each cell. We conclude that the number of incidences is
\begin{equation}\label{incidenceBoundCirclesSec}
\begin{split}
I(\pts,\circles)&\lessapprox_\eps E^6 \Big( (mE^{-2})^{1/2}(nE^{-6})^{3/4}+mE^{-2}+n^{4/3}E^{-8}\Big)\\
&\lessapprox_\eps E^{1/2}m^{1/2}n^{3/4} + E^4m + E^{-2}n^{4/3}.
\end{split}
\end{equation}
Combining \eqref{incidenceBoundCirclesSec} and \eqref{valueOfE}, we obtain \eqref{pointCircleIncidenceBd}.
\end{proof}

% \section{Conjectured optimal results}\label{conjOptimalResultsSec}

% TODO compute what happens if we have $O(m^{2/3}n^{2/3}+m+n)$ point-circle incidences in $\RR^3$, and $O(m^4k^{-6}+mk^{-1})$ non-degenerate $k$--rich spheres spanned by $m$ points in $\RR^3$. 

\section{Acknowledgments}
The author would like to thank Jozsef Solymosi for numerous helpful discussions and the anonymous referee for corrections and suggestions.


\begin{thebibliography}{00}
%
\bibitem{AS} R.~Apfelbaum and M.~Sharir. Non-degenerate spheres in three dimensions, {\it Combin. Probab. Comput.} 20(4): 503--512, 2011.
%
\bibitem{AKS} B. Aronov, V. Koltun and M. Sharir. Incidences between points and circles in three and higher dimensions, {\it Discrete Comput. Geom.} 33: 185--206, 2005.
%
\bibitem{ArS} B.~Aronov and M.~Sharir, Cutting circles into pseudo-segments and improved bounds on incidences,
{\it Discrete Comput. Geom.} 28:475--490, 2002.
%
\bibitem{BB} S.~Barone and S.~Basu. Refined bounds on the number of connected components of sign conditions on a variety. \emph{Discrete Comput. Geom}. 47(3): 577--597, 2012.
%
\bibitem{BCR} J.~Bochnak, M.~Coste and M.~Roy. \emph{Real algebraic geometry}. Springer-Verlag, Berlin. 1998.
%
\bibitem{CEGSW} K.~Clarkson, H.~Edelsbrunner, L.~Guibas, M.~Sharir and E.~Welzl. Combinatorial complexity bounds for arrangements of curves and spheres. \emph{Discrete Comput. Geom}. 5(1): 99--160. 1990.
%
\bibitem{ESZ} J.~Ellenberg, J.~Solymosi and J.~Zahl. New bounds on curve tangencies and orthogonalities. \emph{Discrete Analysis} 22: 1--22, 2016.
%
\bibitem{G1} L.~Guth. Polynomial partitioning for a set of varieties, {\it Math. Proc. Cambridge Philos. Soc.} 159: 459--469 2015.
%
\bibitem{G2} L.~Guth. Distinct distance estimates and low degree polynomial partitioning. {\it Discrete Comput. Geom} 53: 428--444, 2015.
%
\bibitem{GK} L.~Guth and N.~Katz. On the Erd\H{o}s distinct distance problem in the plane. \emph{Ann. of Math.}, 181: 155--190, 2015.
%
\bibitem{GZ} L.~Guth and J.~Zahl. Algebraic curves, rich points, and doubly-ruled surfaces. \emph{Amer. J. Math.} 140(5): 1187--1229, 2018.
%
\bibitem{GZ2} L.~Guth and J.~Zahl. Curves in $\RR^4$ and two-rich points. \emph{Discrete Comput. Geom}.  58(1): 232--253, 2017.
%
\bibitem{KMSS} H.~Kaplan, J.~Matou\u{s}ek, Z.~Safernov\'a and M.~Sharir. Unit distances in three dimensions. {\it Combin. Probab. Comput.} 21(4): 597--610, 2012.
%
\bibitem{KST} T.~K\H{o}v\'ari, V.~S\'os and P.~Tur\'an. On a problem of K.~Zarankiewicz. \emph{Colloq. Math.} 3: 50--57. 1954.
%
\bibitem{Lee} J.~Lee. \emph{Introduction to smooth manifolds}. Springer-Verlag, New York, 2003.
%
\bibitem{MP}
J.\ Matou\v{s}ek and Z.\ Pat\'akov\'a. Multilevel polynomial partitions and simplified range searching. \emph{Discrete Comput. Geom.} 54: 22--41, 2015.

\bibitem{SSZ2} M.~Sharir, A.~Sheffer and J.~Zahl. Incidences between points and non-coplanar circles. {\it Combin. Probab. Comput.} 24(3): 490--520, 2015.

\bibitem{SZ} M.~Sharir and J.~Zahl. Cutting algebraic curves into pseudo-segments and applications. {\it J. Combin. Theory Ser. A}, 150: 1-–35, 2017.

\bibitem{Erratum} M.~Sharir and J.~Zahl. Erratum: ``Breaking the 3/2 barrier for unit distances in three dimensions.'' {\it Int. Math. Res. Not.}, 2021: rnab119, 2021. 
%
%\bibitem{SSZ} A.~Sheffer, J.~Zahl, F.~de Zeeuw. Few distinct distances implies no heavy lines or circles, {\it  Combinatorica} 36(3), 349--364, 2016. 
%
%
\bibitem{Sk} M.~Skopenkov. Surfaces containing two circles through each point.	arXiv:1512.09062, 2015.
%
\bibitem{ST} J.~Solymosi and T.~Tao. An incidence theorem in higher dimensions. {\it Discrete Comput. Geom.} 48: 255--280, 2012.
%
\bibitem{SzTr} E.~Szemer\'edi and W.~Trotter. Extremal problems in discrete geometry. \emph{Combinatorica}. 3(3): 381--392. 1983.
%
\bibitem{Z} J.~Zahl. An improved bound on the number of point-surface incidences in three dimensions. {\it Contrib. Discrete Math.} 8(1): 100--121, 2013.
%
\bibitem{Z2} J.~Zahl. A Szemer\'edi-Trotter type theorem in $\RR^4$. {\it Discrete Comput. Geom.} 54(3): 513--572, 2015.
\end{thebibliography}
\end{document}